\documentclass{article}
\usepackage{macros}

\bibliography{references}

\usepackage{subfiles}

\title{Maximal estimates for perturbations of the Schr\"odinger operator on $\T^d$}
\author{Inbo Gottlieb Fenves and Jia Hao Tan}

\date{\today}

\begin{document}

\maketitle

\begin{abstract}
We study $L^p_\xbf L^\infty_t$ maximal estimates for exponential sums associated to $C^2$ graph hypersurfaces, motivated by Schr\"odinger maximal estimates on $\T^d$. We show that the conjectured maximal estimate for the periodic Schr\"odinger equation fails for $C^2$-small perturbations of the paraboloid, which can be viewed as a higher-dimensional extension of \cite{FuRenWan}. Our approach uses new lower bounds for incidence estimates originally proven in \cite{CaiZha}, for which we provide an alternative proof based on homogeneous dynamics. We also show the estimates are essentially sharp at the decoupling endpoint for the paraboloid $p = \frac{2(d+2)}{d}$.
\end{abstract}

\section{Introduction}\label{section:intro}
In this paper, we consider $L^p_{\xbf} L_{t}^\infty$-maximal estimates for exponential sums of the form 
\begin{align*}
    u(\xbf,t) = \sum_{\qbf \in [Q]^{d}} b_\qbf e(\xbf \cdot \qbf/Q + t \psi(\qbf/Q)), \hspace{0.5cm} (\xbf,t) \in [0,Q]^d \times [0,Q^2]
\end{align*}
where $e(z) = e^{2\pi i z}$, $d \in \N$, $[Q] = \{1,...,Q\}$, $b_\qbf \in \C$, and $\psi \in C^2([0,1]^{d})$. We will be particularly interested in perturbations of the paraboloid and more generally in uniformly convex $C^2$ graph hypersurfaces. When $\psi(\xi) = |\xi|^2$, we can consider a solution $U(\xbf,t)$ to the Schr\"odinger equation on the unit torus
\begin{equation*}
    \begin{cases}
        i \partial_t U - \frac{1}{2\pi} \Delta U = 0 ,\quad (\xbf, t) \in \mathbb{T}^{d} \times [0,1]\\
        U(\xbf,0) = \sum_{\qbf \in [Q]^{d}} b_\qbf e(\xbf \cdot \qbf)
    \end{cases}
\end{equation*}
Hence, the exponential sum $u(\xbf,t) = U(\xbf/Q, t/Q^2)$ is the parabolically rescaled solution.  Maximal estimates have been studied in relation to the pointwise convergence of the Schr\"odinger equation on $\R^d$, first addressed by Carleson \cite{Car} in the Euclidean case for $x \in \R$ where he proved that $f \in H^{s}(\R)$ for $s \geq \frac{1}{4}$ was sufficient to guarantee 
\begin{align*}
    \lim_{t \to 0^{+}} e^{it\Delta} f(x)= f(x) \text{ for a.e. $x \in \R$}. 
\end{align*}
In $\R^1$, the necessity of $s \geq \frac{1}{4}$ was later proven by Dahlberg and Kenig \cite{DahKen}. For higher dimensions $\R^d$, Bourgain constructed a counterexample which proves that $s \geq \frac{d}{2(d+1)}$ is necessary (see \cite{Bou} and \cite{Pie} for further exposition), and the sufficiency of $s > \frac{d}{2(d+1)}$ was proved by Du, Guth, and Li \cite{DuGutLi} for dimension $d = 2$ and later by Du and Zhang \cite{DuZha} for all dimensions $d \geq 3$, thereby resolving the original problem of Carleson up to the endpoint. 

In contrast to the Euclidean problem, the pointwise convergence of the equation on even the one dimensional torus $\T^1$ is subtler due to the number theoretic nature of the Schr\"odinger kernel given by the Weyl sum
\begin{align*}
    K_Q(x,t) = \sum_{q \in [Q]} e\Big(q x + q^2t \Big), \quad (x,t) \in [0,1]^2
\end{align*}
which can be large on its major arcs (see \cite{Bar} for a more detailed discussion).  
Moyua and Vega \cite{Moy} showed that in the periodic case $\T^1$, $s \geq \frac{1}{4}$ was necessary and established sufficiency when $s > \frac{1}{3}$. Later, Compaan, Lucà, and Staffilani \cite{ComLucSta} extended the result to $\T^d$ and showed that $s \geq \frac{d}{2(d+1)}$ is necessary for pointwise convergence, while $s > \frac{d}{d+2}$ is sufficient. On $\T^d$, the pointwise convergence of the Schr\"odinger equation for general initial data $f \in H^s$ in the range $s \in \Big[\frac{d}{2(d+1)}, \frac{d}{d+2}\Big]$ remains open. The proofs of the sufficiency results have relied on periodic Strichartz estimates established by Bourgain and Demeter \cite{BouDem}. 

Define the \textit{critical} and \textit{conjectured} exponents $p_{\crit}$ and $p_{\con}$ respectively by
\begin{equation*}
    p_{\crit} = \frac{2(d+2)}{d} \quad \mathrm{and} \quad p_{\con} = \frac{2(d+1)}{d}.
\end{equation*}

In this paper, we work with perturbations of the parabolically rescaled sum for the Schr\"odinger equation. The sufficiency of $s > \frac{1}{p_{\con}}$ would match the Euclidean problem and is related to the following conjectured Schr\"odinger maximal estimate. 

\begin{conj}\label{conj:ConjUpperBound}
For all $2 \leq p \leq p_{\con}$, $Q \gg 1$, and $(b_\qbf)_{\qbf \in [Q]^d}$, we have
\begin{align}\label{eq:ConjUpperBound}
    \norm{\sup_{t \in [0,Q^2]} \sum_{\qbf \in [Q]^d} b_\qbf e\Big(\xbf \cdot \frac{\qbf}{Q} + t \frac{\vert \qbf \vert^2}{Q^2} \Big)}_{L_\xbf^{p} ([0,Q]^d)} \lesssim_\epsilon Q^{\frac{d}{p} +\frac{1}{p_{\con}} + \epsilon} \norm{b_\qbf}_{\ell^2([Q]^{d})}
\end{align}
\end{conj}

This conjectured maximal estimate would follow from the endpoint estimate at $p = p_{\con}$, where it reads
\begin{align}
    \norm{\sup_{t \in [0,Q^2]} \sum_{\qbf \in [Q]^d} b_\qbf e\Big(\xbf \cdot \frac{\qbf}{Q} + t \frac{\vert \qbf \vert^2}{Q^2} \Big)}_{L_\xbf^{p_{\con}} ([0,Q]^d)} \lesssim_\epsilon Q^{\frac{d}{2} + \epsilon} \norm{b_\qbf}_{\ell^2([Q]^{d})}.
\end{align}

One would hope that Conjecture \ref{conj:ConjUpperBound} could be settled through the decoupling methods established in \cite{BouDem}, but an immediate issue is the inability of decoupling to distinguish between the paraboloid and small perturbations of the paraboloid. In $d = 1$, Fu, Ren, and Wang \cite{FuRenWan} showed that the conjectured $L_x^4$ maximal estimate \eqref{eq:ConjUpperBound} failed over the more general class of \textit{uniformly convex} (or \textit{generalized Dirichlet} as in Fu, Guth, Maldague \cite{FuGutMal}) sequences falling under the decoupling regime. They showed there exists some uniformly convex sequence $(a_q)$, and initial data $(b_q) \subset \C$ such that 
\begin{align*}
    \norm{\sup_{t \in [0,Q^2]} \sum_{q \in [Q]} b_q e\Big(x \cdot \frac{q}{Q} + t a_q \Big)}_{L_x^{4} ([0,Q])} \gtrsim Q^{\frac{1}{2} + \frac{1}{12}} \Vert b_q \Vert_{\ell^2([Q])}.
\end{align*}
Moreover, they proved this estimate is sharp up to a $Q^\varepsilon$-loss following from the $\ell^2 L^6$-decoupling of the parabola \cite{BouDem}.

The conjectured maximal estimate remains open except in the special case of the Weyl sum when $b_\qbf \equiv 1$, where it was proved first for $d =1$ by Barron \cite{Bar}, and later Baker \cite{Bak} showed that one does not require an $Q^\varepsilon$ loss. The Weyl sum estimate was generalized to higher dimensions $d$ by Miao, Yuan, and Zhao \cite{Mia}. For $d =1$, Demeter \cite{Demeter2025LevelSet} was able to obtain essentially sharp $L_x^4$ Schrödinger maximal estimate for general initial data $b_\qbf$ without any Fourier analysis by studying the Schrödinger kernel more closely.

In this paper, we show the failure of the conjectured maximal estimate over the $C^2$ class proved in \cite{FuRenWan} for $d=1$ also holds in all dimensions \textit{perturbatively}, in the following sense.

\begin{thm}\label{thm:SharpSchrodinger}
    Suppose $\phi \in C^2([0,1]^{d})$ satisfies $\norm{\phi}_{C^2} \lesssim 1$. Then for all $\epsilon > 0$ and all $Q \gg_\epsilon 1$, there exists $\psi \in C^2$ which is $\epsilon$-close to $\phi$ in the $C^2$ topology, and a sequence $(b_\qbf)$ satisfying
    \begin{equation*}
        \norm{\sup_{t \in [0,Q^2]} \left| \sum_{\qbf \in [Q]^{d}} b_\qbf e\bigl(\xbf \cdot \qbf/Q + t\psi(\qbf/Q) \bigr) \right|}_{L_\xbf^p([0,Q]^{d})} \gtrsim_\epsilon Q^{\frac{d(d+1)}{2(d+2)}+\frac{1}{p}}\norm{b_\qbf}_{\ell^2}
    \end{equation*}
    for all $1 \le p < \infty$. In particular, we have
    \begin{equation*}
       \norm{\sup_{t \in [0,Q^2]} \left| \sum_{\qbf \in [Q]^{d}} b_\qbf e\bigl(\xbf \cdot \qbf/Q + t\psi(\qbf/Q) \bigr) \right|}_{L_\xbf^{p_{\crit}}([0,Q]^{d})} \gtrsim_\epsilon Q^{\frac{d}{2}}\norm{b_\qbf}_{\ell^2}
    \end{equation*}
    and
    \begin{equation*}
        \norm{\sup_{t \in [0,Q^2]} \left| \sum_{\qbf \in [Q]^{d}} b_\qbf e\bigl(\xbf \cdot \qbf/Q + t\psi(\qbf/Q) \bigr) \right|}_{L_\xbf^{p_{\con}}([0,Q]^{d})} \gtrsim_\epsilon Q^{\frac{d}{2} + \frac{d}{2(d+1)(d+2)}}\norm{b_\qbf}_{\ell^2}.
    \end{equation*}
\end{thm}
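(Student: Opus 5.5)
The plan is to take $b_\qbf = \mathbf{1}_{S}(\qbf)$ for a well-chosen set $S \subset [Q]^d$ and to build $\psi$ by perturbing $\phi$ only near the points $\qbf/Q$, $\qbf \in S$. The aim is that for a positive proportion of $\xbf \in [0,Q]^d$ there is a time $t(\xbf) \in [0,Q^2]$ with all phases $\xbf\cdot\qbf/Q + t(\xbf)\psi(\qbf/Q)$, $\qbf\in S$, within $1/10$ of an integer. That gives $\sup_t|u(\xbf,t)| \gtrsim |S|$ on a set of measure $\gtrsim_\epsilon Q^d$, hence
\begin{equation*}
\norm{\sup_t |u|}_{L^p_\xbf} \gtrsim_\epsilon |S|\, Q^{d/p} = |S|^{1/2} Q^{d/p}\norm{b_\qbf}_{\ell^2}.
\end{equation*}
Since $\frac{d(d+1)}{2(d+2)} + \frac{1}{p}$ is not of the form $\frac12\log_Q|S| + \frac dp$, I would instead aim for a coherent set of measure $\gtrsim Q^{d-1}$ with $|S| \sim Q^{d(d+1)/(d+2)+ \text{(adjusted)}}$. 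Concretely, I balance the exponents so that $|S|\,(\text{measure})^{1/p}$ matches the claimed bound for every $p$. The $p$-independent factor is $|S|^{1/2}=Q^{d(d+1)/(2(d+2))}$, with measure $\gtrsim Q$. The two displayed consequences then follow by substituting $p=p_{\crit}$ and $p=p_{\con}$, where one checks $\frac{d(d+1)}{2(d+2)}+\frac{d}{2(d+2)}=\frac d2$ and similarly at $p_{\con}$.

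For the construction of $S$, I take $S$ to be a grid $(H\Z)^d\cap[Q]^d$ with spacing $H$, so that $|S|\sim (Q/H)^d$. I choose $H = Q^{1/(d+2)}$, which gives $|S| = Q^{d(d+1)/(d+2)}$. On the frequency side the points are separated by $h = H/Q$. A $C^2$-perturbation of size $\epsilon$ can therefore place independent bumps of height $\sim \epsilon h^2$ at each point. After multiplication by $t \le Q^2$, these bumps shift each phase by up to $\epsilon Q^2h^2 = \epsilon Q^{2/(d+2)} \gg 1$. This slack is what lets us prescribe the values $\psi(\qbf/Q)$ modulo $1/t$ in an essentially arbitrary way for a controlled family of times. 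At a fixed coherent time, the function $\xbf \mapsto u(\xbf,t)$ is periodic with period $Q/H$ in each coordinate. Each coherent time thus produces $H^d$ unit-size peaks in $[0,Q]^d$.

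The main step is to choose a family of times $t_1,\dots,t_J$ and one perturbation $\psi$ that works for all of them simultaneously. The requirement is that $t_j\psi(\qbf/Q)$ agrees, modulo $1$ and up to $1/10$, with a linear function $\xbf_j\cdot\qbf/Q$ for every $\qbf\in S$. The translates $\xbf_j$ of the peak lattice must moreover be spread out enough that the union of peaks has the required measure. I expect this compatibility-and-covering step to be the main obstacle, since a single perturbation has to serve many times at once. I would formulate it as an incidence problem between the points $(\qbf/Q,\psi(\qbf/Q))$ and the family of "tubes" cut out by the phase conditions. I would then invoke the lower bound for incidences proved later in the paper (the Cai--Zhang type estimate, obtained via homogeneous dynamics/equidistribution of lattice translates). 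Concretely, I would first try taking $\psi$ to be a rational quadratic form close to $\phi$ on $S$, with denominators of size $\sim Q^2h^2$, so that the $t_j$ lie on major arcs. Equidistribution would then give the spread of the $\xbf_j$, and a partition of unity glues the local quadratic models into a $C^2$ function $\epsilon$-close to $\phi$.
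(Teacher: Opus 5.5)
The gap is the step you yourself flag as the main obstacle. Your setup is sound: $S=(H\Z)^d\cap[Q]^d$ with $H=Q^{1/(d+2)}$, independent bumps of height $\asymp\epsilon h^2$ at the points $\qbf/Q$, $H^d$ periodic unit peaks per fully coherent time, and the need for about $Q/H^d=H^2$ coherent times with pairwise disjoint peak lattices. But producing those coherent times with spread-out peaks is the whole content of the theorem, and the proposal does not establish it. The tools you point to also do not supply it:
\begin{itemize}
\item Theorem~\ref{thm:CaiZha} produces a non-grid set of frequencies at which the perturbed function lands in $Q^{-1}\Z$. It says nothing about your grid $S$ and destroys the periodicity you rely on.
\item In the quadratic-form route, a homogeneous quadratic phase restricted to the grid places the peaks on the same lattice, up to half-period shifts, at every fully coherent time. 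Anything that moves the peaks must therefore be a linear (Galilean-shear) term, not arithmetic structure or equidistribution.
\item A single rational quadratic form is not $C^2$-close to an arbitrary $\phi$. Gluing different rational local models with a partition of unity destroys the common set of coherent times.
\end{itemize}

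The missing idea is to take times and positions in arithmetic progression, $t_j=jT$ and $\xbf_j=2j\mathbf{e}_1$ for $1\le j\le J$.
\begin{itemize}
\item All compatibility conditions then collapse to one congruence per frequency, $T\psi(\qbf/Q)+2q_1/Q\in\Z$ for $\qbf\in S$. You impose it by rounding $\phi(\qbf/Q)$ to the nearest point of $T^{-1}(\Z-2q_1/Q)$.
\item This costs at most $1/(2T)$, which your bumps afford once $T\gtrsim Q^2/(\epsilon H^2)$. Then $J=\lfloor Q^2/T\rfloor\asymp\epsilon H^2$, which is exactly your slack $\epsilon Q^2h^2$.
\item At $t_j$ one has $u(\xbf,t_j)=\sum_{\qbf\in S}e\bigl((\xbf-2j\mathbf{e}_1)\cdot\qbf/Q\bigr)$, which equals $|S|$ on $2j\mathbf{e}_1+(Q/H)\Z^d$.
\item Since $H^3\le Q$, the shifts $2j\mathbf{e}_1$ with $j\le J$ are pairwise at least $2$ apart modulo $(Q/H)\Z^d$. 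This gives $\gtrsim\epsilon H^{d+2}=\epsilon Q$ disjoint balls of radius $\asymp_d 1$ on which $\sup_t|u|\ge|S|/2$.
\item Hence the norm is $\gtrsim(\epsilon Q)^{1/p}|S|^{1/2}\norm{b}_{\ell^2}$ with $|S|^{1/2}\asymp Q^{d(d+1)/(2(d+2))}$, which is the claimed bound.
\end{itemize}

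This is the same shear mechanism as the paper's. Its choice $\psi=\psi_0-\xi_1/Q$, with times $Qp_1$ and positions $p_1\mathbf{e}_1$, corresponds to $T=Q$ with step $\mathbf{e}_1$. With $T=Q$, however, the rounding error $1/Q$ is far too large for bumps on a grid. That is why the paper needs Theorem~\ref{thm:CaiZha} to produce $\gtrsim Q^{d(d+1)/(d+2)}$ frequencies with $\psi_0(\qbf/Q)\in Q^{-1}\Z$, and it gets only one peak per time.

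Completed this way, your route is elementary and needs no lattice-point input. In fact, it can do more. Take $H\asymp_\epsilon Q^{d/(d+2)}$ and a step $\ybf$ whose first $\asymp(Q/H)^d$ multiples stay $2$-separated modulo $(Q/H)\Z^d$. The same argument then gives $\gtrsim_\epsilon Q^{d/(d+2)+d/p}\norm{b}_{\ell^2}$, matching Theorem~\ref{thm:UpperMaximalBound} for $2\le p\le p_{\crit}$.
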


Theorem \ref{thm:SharpSchrodinger} follows from the more general Theorem \ref{thm:LowerMaximalBound}, which treats arbitrary codimension submanifolds. This is a generalization of \cite[Theorem 1.2]{FuRenWan} (up to the uniformity of scales) in two different directions. First, our result applies to arbitrary $C^2$-perturbations, as opposed to particularly constructed uniformly convex sequences; moreover, we are able to treat arbitrary dimension and codimension. In the other direction, decoupling of compact $C^2$-hypersurfaces gives us the following upper bounds. 
\begin{thm}\label{thm:UpperMaximalBound}
    Suppose $\phi \in C^2([0,1]^{d})$ satisfies $\Vert \varphi \Vert_{C^2} \lesssim 1$ and $cI \le \nabla^2\phi(\xi) \le CI$ for every $\xi \in [0,1]^d$. Then for any $(b_\qbf) \subset \C$, we have
    \begin{equation*}
        \norm{\sup_{t \in [0,Q^2]} \left| \sum_{\qbf \in [Q]^{d}} b_\qbf e\bigl(\xbf \cdot \qbf/Q + t\phi(\qbf/Q) \bigr) \right|}_{L_\xbf^p([0,Q]^{d})} \lesssim_\epsilon Q^{\frac{d}{p}+\frac{2}{p_{\crit}} + \epsilon} \norm{b_\qbf}_{\ell^2}
    \end{equation*}
    for all $2 \le p \le p_{\crit}$. In particular, we have
    \begin{equation*}
        \norm{\sup_{t \in [0,Q^2]} \left| \sum_{\qbf \in [Q]^{d}} b_\qbf e\bigl(\xbf \cdot \qbf/Q + t\phi(\qbf/Q) \bigr) \right|}_{L_\xbf^{p_{\crit}}([0,Q]^{d})} \lesssim_\epsilon Q^{\frac{d}{2} + \epsilon}\norm{b_\qbf}_{\ell^2}
    \end{equation*}
    and
    \begin{equation*}
        \norm{\sup_{t \in [0,Q^2]} \left| \sum_{\qbf \in [Q]^{d}} b_\qbf e\bigl(\xbf \cdot \qbf/Q + t\phi(\qbf/Q) \bigr) \right|}_{L_\xbf^{p_{\con}}([0,Q]^{d})} \lesssim_\epsilon Q^{\frac{d}{2}+\frac{d^2}{2(d+1)(d+2)} + \epsilon}\norm{b_\qbf}_{\ell^2}.
    \end{equation*}
\end{thm}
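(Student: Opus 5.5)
We prove Theorem \ref{thm:UpperMaximalBound} by combining the $\ell^2$ decoupling theorem for compact $C^2$ hypersurfaces with positive definite Hessian (Bourgain--Demeter) with a standard local-constancy argument in $t$ that replaces the supremum by an $L^p_t$ norm. It suffices to prove the endpoint $p=p_{\crit}$. Indeed, for $2\le p\le p_{\crit}$, Hölder on $[0,Q]^d$ gives
\[
\|F\|_{L^p}\le Q^{d(\frac1p-\frac1{p_{\crit}})}\|F\|_{L^{p_{\crit}}}.
\]
Combined with the endpoint bound $Q^{\frac{d}{p_{\crit}}+\frac{2}{p_{\crit}}+\epsilon}$, this yields $Q^{\frac dp+\frac2{p_{\crit}}+\epsilon}$. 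Since $\frac{d+2}{p_{\crit}}=\frac d2$, this also gives the first displayed special case. The $p_{\con}$ case follows by computing
\[
\frac{d}{p_{\con}}+\frac{2}{p_{\crit}}=\frac{d^2}{2(d+1)}+\frac{d}{d+2}=\frac d2+\frac{d^2}{2(d+1)(d+2)}.
\]

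\textbf{Step 1: removing the supremum.} The frequencies of $u$ in $t$ are $\phi(\qbf/Q)\in[-C,C]$, so $u(\xbf,\cdot)$ has $t$-frequency support in a fixed bounded interval. Bernstein/local constancy then gives
\[
\sup_{t\in[0,Q^2]}|u(\xbf,t)|\lesssim \Big(\sum_{k}|u(\xbf,t_k)|^p\Big)^{1/p}
\]
for a $\sim 1$-spaced net $\{t_k\}$. Rigorously: write $u=u*_t\check\eta$ with $\eta\equiv1$ on $[-C,C]$ and Schwartz, apply Hölder, and bound $\sup_t|u|^p\lesssim\int_{\R}|u(\xbf,s)|^p w(s)\,ds$, where $w$ is a weight decaying rapidly outside $[0,Q^2]$. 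This reduces matters to
\[
\big\|\sup_t|u|\big\|_{L^p_\xbf([0,Q]^d)}\lesssim \|u\|_{L^p(w_{[0,Q]^d\times[0,Q^2]})},
\]
where the right side carries the sharp count of all $\sim Q^2$ time slices; we make no attempt to exploit that only one time matters per $\xbf$.

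\textbf{Step 2: periodization and decoupling.} Since $\qbf\in\Z^d$, $u$ is $Q$-periodic in each $x_j$. We therefore enlarge the spatial domain to $[0,Q^2]^d$, which costs exactly a factor $Q^{-d/p}$ after normalization:
\[
\|u\|_{L^p([0,Q]^d\times[0,Q^2])}=Q^{-d/p}\|u\|_{L^p([0,Q^2]^{d+1})}.
\]
After rescaling $(\xbf,t)\mapsto(\xbf/Q^2,t/Q^2)$, the frequencies $(\qbf/Q,\phi(\qbf/Q))$ are $Q^{-1}$-separated points on the surface $\{(\xi,\phi(\xi))\}$, which is $C^2$ with $cI\le\nabla^2\phi\le CI$. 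On a ball of radius $Q^2$ they lie in a $Q^{-2}$-neighborhood, with each $Q^{-1}$-cap containing $O(1)$ points. Bourgain--Demeter $\ell^2L^{p_{\crit}}$ decoupling, which holds for $C^2$ surfaces of this type by the standard induction on scales/approximation argument, gives
\[
\|u\|_{L^{p_{\crit}}(w_{B_{Q^2}})}\lesssim_\epsilon Q^{\epsilon}\Big(\sum_\qbf\|b_\qbf e(\cdots)\|_{L^{p_{\crit}}(w_{B_{Q^2}})}^2\Big)^{1/2}\sim Q^{\epsilon}\,Q^{\frac{2(d+1)}{p_{\crit}}}\|b\|_{\ell^2}.
\]

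\textbf{Step 3: collecting powers.} Combining the factors gives
\[
Q^{-d/p_{\crit}}\,Q^{2(d+1)/p_{\crit}}=Q^{(d+2)/p_{\crit}}=Q^{\frac{d}{p_{\crit}}+\frac{2}{p_{\crit}}},
\]
which is the claimed endpoint bound.

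\textbf{Main obstacle.} The delicate point is justifying decoupling for a merely $C^2$ (rather than $C^3$ or smooth) hypersurface with uniform $Q^\epsilon$ constants. I would handle this by invoking the known $C^2$ decoupling theorem, whose proof iterates decoupling at scale $\delta^{1/2}$ using that $C^2$ surfaces are locally $o(\delta)$-approximated by their quadratic Taylor paraboloids at sufficiently small scales. The remaining bookkeeping in Step 1, namely the weights and the tails in $t$ beyond $[0,Q^2]$, is routine.
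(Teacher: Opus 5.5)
Your proposal is correct and follows essentially the same route as the paper. Both arguments use local constancy in $t$ to replace the supremum by a weighted $L^p_t$ norm, then $Q$-periodicity in $\xbf$ to pass to $[0,Q^2]^{d+1}$ at a cost of $Q^{-d/p}$, then $\ell^2$ decoupling at scale $R=Q^2$ with $O(1)$ frequencies per cap.

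The paper phrases this through dyadic level sets $\Omega_\alpha$ and interpolates the $p_{\mathrm{crit}}$ decoupling bound against the trivial bound $|\Omega_\alpha|\le Q^d$. That is equivalent to your direct endpoint estimate followed by H\"older. Your mention of rescaling by $Q^{-2}$ is unnecessary, since decoupling is applied directly on the ball of radius $Q^2$.
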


In dimension $d = 1$, we recover the $L_{x}^{p_{\con}} L_{t}^\infty$ estimate of \cite{FuRenWan} for both the upper and lower estimate, and in higher dimensions our maximal estimates are sharp up to an $Q^\varepsilon$ loss at $L_{\xbf}^{p_{\crit}} L_{t}^\infty$.

The primary tool in proving the lower bound of Theorem \ref{thm:SharpSchrodinger} is a result on intersections of $C^2$ submanifolds with the rescaled integer lattice proven by Cairo-Zhang.

\begin{thm}[\cite{CaiZha}, Theorem 6.1]\label{thm:CaiZha}
    Suppose $\phi \colon [0,1]^d \to \R^{n-d}$ is $C^2$ with $\norm{\phi}_{C^2} \lesssim 1$. Then for all $0 < \epsilon < 1$ and $Q \gg_\epsilon 1$, there exists a $C^2$ function $\psi$ which is $\epsilon$-close to $\phi$ in the $C^2$-topology and which satisfies
    \begin{equation*}
        \left| \left\{ \qbf \in \Z^{d} : \psi(\qbf/Q) \in Q^{-1}\Z^{n-d} \right\} \right| \gtrsim_\epsilon Q^{\frac{dn}{d+2(n-d)}}.
    \end{equation*}
\end{thm}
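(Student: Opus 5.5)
The plan is a random-perturbation and bump-correction argument, tuned by a counting heuristic that yields exactly the exponent $\frac{dn}{d+2(n-d)} = \frac{dn}{2n-d}$.

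\textbf{Step 1: parameters and bumps.} Fix a small parameter $\eta = \eta(Q)$. Call $\qbf \in [Q]^d$ \emph{$\eta$-good} for a function $\psi$ if
\[
\operatorname{dist}\bigl(\psi(\qbf/Q), Q^{-1}\Z^{n-d}\bigr) \le \eta/Q .
\]
If $\qbf$ is $\eta$-good, add to $\psi$ a bump supported in a ball of radius $s$ about $\qbf/Q$ and of height $\le \eta/Q$. This moves $\psi(\qbf/Q)$ exactly onto $Q^{-1}\Z^{n-d}$ at $C^2$-cost $\lesssim \eta/(Q s^2)$. We choose
\[
s = (\eta/(\epsilon Q))^{1/2},
\]
so each bump costs at most $\epsilon$ in $C^2$. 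Bumps around points at mutual distance $\ge 2s$ have disjoint supports, so their $C^2$ costs do not add up.

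\textbf{Step 2: the counting heuristic.} Partition $[0,1]^d$ into cubes of side $2s$, and aim to produce an $\eta$-good point in a positive proportion of these cubes. Each cube contains about $(sQ)^d$ lattice points. If the values $Q\psi(\qbf/Q) \bmod 1$ behave like independent uniform variables, each point is good with probability about $\eta^{n-d}$. We pick $\eta$ so that the expected number of good points per cube is $\asymp 1$:
\[
(sQ)^d \eta^{n-d} \asymp 1, \qquad \text{i.e.} \qquad Q^{d/2}\eta^{d/2+n-d}\asymp_\epsilon 1 .
\]
The total number of corrected points is then about
\[
s^{-d} \asymp Q^d\eta^{n-d} = Q^{d - \frac{d(n-d)}{2n-d}} = Q^{\frac{dn}{2n-d}},
\]
which is the bound in Theorem~\ref{thm:CaiZha}. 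After the corrections, $\psi$ hits $Q^{-1}\Z^{n-d}$ at one point in each selected cube.

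\textbf{Step 3: randomization (main obstacle).} We must justify the independence heuristic, and this is where we expect the real difficulty. Averaging only over constant shifts $\phi + a$, $a\in[0,\epsilon]^{n-d}$, makes the expected count correct. It does not control the second moment inside a cube: for instance, if $\phi$ is locally affine with rational-like gradient, the values $Q\phi(\qbf/Q) \bmod 1$ are highly correlated. We would therefore first perturb $\phi$ on each cube of side $\asymp \epsilon^{1/2}$ (with a fixed smooth partition of unity) by a random polynomial of degree $\le 2$, namely
\[
a + B\xi + \tfrac12\,\xi^T C\xi ,
\]
with $(a,B,C)$ random and of $C^2$-size $\le \epsilon$. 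For a fixed $\qbf$ in a small cube, $Q\psi(\qbf/Q)$ becomes a random point on a torus orbit. Pair correlations for $\qbf \ne \qbf'$ in the same $s$-cube then reduce to the distribution of
\[
\bigl(B\,\mathbf m + \tfrac{1}{2Q}\,\mathbf m^T C\,(\cdots)\bigr) \bmod 1, \qquad \mathbf m = \qbf-\qbf',\quad |\mathbf m|\le sQ .
\]
We would show this distribution is close to uniform except for an exceptional set of $\mathbf m$ whose contribution is negligible. The tool here is the homogeneous-dynamics input: equidistribution of expanding translates of unipotent/horospherical orbits on $\mathrm{SL}_m(\R)/\mathrm{SL}_m(\Z)$, via a Dani--Kleinbock--Margulis type quantitative nondivergence or equidistribution statement. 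This should hold uniformly in $\phi$, using only $\norm{\phi}_{C^2}\lesssim 1$.

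\textbf{Step 4: conclusion.} With first and second moments controlled, Paley--Zygmund gives, for each $s$-cube, probability $\gtrsim 1$ of containing a good point. Taking expectations over the randomness, some realization has good points in $\gtrsim s^{-d}$ cubes. Applying the bump corrections of Step 1 to one good point per such cube produces $\psi$ that is $O(\epsilon)$-close to $\phi$ in $C^2$ with
\[
\gtrsim_\epsilon Q^{\frac{dn}{d+2(n-d)}}
\]
lattice hits. Rescaling $\epsilon$ then gives Theorem~\ref{thm:CaiZha}.
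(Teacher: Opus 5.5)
\textbf{Assessment.} Your scaling is right and matches the paper's. Write $m=n-d$. Your slabs of height $\eta/Q=\epsilon s^2$ over cubes of side $s\asymp_\epsilon Q^{-n/(2n-d)}$ are the thickenings $\Mc_\phi(\epsilon,\theta)$, and your bump step is Proposition~\ref{prop:perturbations}. One small fix there: keep only a $2s$-separated subfamily of cubes, as the paper does with $\Theta^\ell$, since good points in adjacent cubes can be arbitrarily close and then the bumps interfere.

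As written, though, Step 3 is a genuine gap, and it is the only nontrivial step. Two things go wrong there.
\begin{itemize}
\item Gluing random polynomials of $C^2$-size $\epsilon$ with a partition of unity at scale $\epsilon^{1/2}$ costs $O(1)$ in $C^2$, not $O(\epsilon)$. The reason is that $\nabla^2\chi_k\sim\epsilon^{-1}$ multiplies a perturbation of size $\epsilon$.
\item The tool you name is not the relevant one. The pair correlation concerns the law of $B\mathbf m$ in $\R^m/\Z^m$. Equidistribution or nondivergence of orbits on $\SL_m(\R)/\SL_m(\Z)$ does not bear on it.
\end{itemize}

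\textbf{How to close Step 3.} The missing estimate is elementary and needs no localization, no quadratic term and no dynamics. Take one global $\psi=\phi+a+B\xi$, with $a$ uniform on $[0,\epsilon]^m$ and $B$ uniform on $[0,\epsilon]^{m\times d}$. Then $Q\psi(\qbf/Q)=Q\phi(\qbf/Q)+Qa+B\qbf$.

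Once $Q\epsilon\ge 1$, the variable $Qa \bmod \Z^m$ has density in $[2^{-m},2^m]$. Hence $\P(\qbf\text{ good})\asymp\eta^m$ for every $\phi$.

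For $\mathbf m=\qbf-\qbf'\neq 0$, the difference $Q\psi(\qbf/Q)-Q\psi(\qbf'/Q)=c+B\mathbf m$ (with $c$ deterministic) does not involve $a$. Each row of $B\mathbf m$ contains a summand uniform on an interval of length $\epsilon\|\mathbf m\|_\infty$, independently across rows. Conditioning on $B$ therefore gives, uniformly in $c$ and hence in $\phi$,
\[
\P(\qbf,\qbf'\text{ both good})\le(4\eta)^m\,\P_B\bigl(\mathrm{dist}(c+B\mathbf m,\Z^m)\le 2\eta\bigr)\le(4\eta)^{2m}\Bigl(1+\frac{2}{\epsilon\|\mathbf m\|_\infty}\Bigr)^m .
\]
Sum over $0<\|\mathbf m\|_\infty\le 2sQ$ and use $(sQ)^d\eta^m\asymp 1$. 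The off-diagonal part of $\E[N^2]$ is then $O(1)+O\bigl(\epsilon^{-m}(sQ)^{-\min(d,m)}\log(sQ)\bigr)$, which is $O(1)$ for $Q\gg_\epsilon 1$. Paley--Zygmund in each cube and a first-moment count over cubes finish the argument, as in your Step 4.

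\textbf{Comparison with the paper.} The paper cites this theorem and proves only the variant Theorem~\ref{thm:PointsOnManifolds}, by a different route. It fixes $\phi$ and randomizes the lattice $\Lambda\in\SL_n(\R)/\SL_n(\Z)$. Siegel's and Rogers' formulas then give $\Var(F_\theta)=\E[F_\theta]$ with no computation (Corollary~\ref{cor:variancebound}). The price is the return to $\Z^n$:
\begin{itemize}
\item It needs probability $1-\kappa$ in order to meet the small ball $B^G_{2\delta}(\Id)$, losing a factor $\delta^{n^2-1}$.
\item It only places points of $Q^{-1}\Z^n_{\prim}$ on a tilted manifold $g\Mc_{\phi_g}$, which must then be re-graphed via Proposition~\ref{prop:graphs}.
\item Rogers' formula fails for $n=2$, which forces the weaker argument of Section~\ref{section:counting2}.
\end{itemize}
Randomizing the function, with Step 3 done as above, proves Theorem~\ref{thm:CaiZha} exactly as stated. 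It works for the standard lattice, uniformly in $n\ge 2$ including plane curves, with polynomial dependence on $\epsilon$. Its cost is the short pair-correlation bound above, which the lattice model gets for free.
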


In Section \ref{section:counting}, we provide a proof of the following variation on Theorem \ref{thm:CaiZha}.

\begin{thm}\label{thm:PointsOnManifolds}
    Suppose $n \ge 3$. For all $\phi \in C^2([0,1]^d;\R^{n-d})$, $0 < \delta \ll_n 1$, $0 < \epsilon \ll_{n,\delta} 1$, and $Q \gg_{n,\delta,\epsilon} 1$, there exists $g \in \SL_n(\R)$ with $\max_{i,j}|g_{ij}-\delta_{ij}| < 2\delta$ and $\phi_g \in C^2([0,1]^d;\R^{n-d})$ with $\norm{\phi - \phi_g}_{C^2} \lesssim_{n} \epsilon$ for which
    \begin{equation*}
        |g\Mc_{\phi_g} \cap Q^{-1}\Z_{\prim}^n| \gtrsim_{n} \delta^{n^2-1}\epsilon^{n-d} Q^{\frac{dn}{d+2(n-d)}}.
    \end{equation*}
\end{thm}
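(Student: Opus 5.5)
We will prove Theorem \ref{thm:PointsOnManifolds} by an averaging (pigeonhole) argument over $g$ near the identity in $\SL_n(\R)$, using equidistribution of lattices to count primitive points. Here $\Mc_{\phi}=\{(\xi,\phi(\xi)):\xi\in[0,1]^d\}$. The number of points of $Q^{-1}\Z^n_{\prim}$ in a small neighborhood of $g\Mc_\phi$ is comparable to $|g^{-1}Q^{-1}\Z^n_{\prim}\cap N_r(\Mc_\phi)|$. After perturbing $\phi$ we will force these near-points to lie exactly on the perturbed graph.

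\textbf{Setting the thickness.} Fix $r = \epsilon Q^{-\alpha}$, where $\alpha$ is chosen so that a near-point can be absorbed by a $C^2$-small bump. Suppose a point $x=(\xi_0,\eta_0)$ satisfies $|\eta_0-\phi(\xi_0)|\le r$. We add to $\phi$ a bump $\epsilon\,\chi((\xi-\xi_0)/\rho)\,v$ of width $\rho$. Its $C^2$ norm is about $r\rho^{-2}$, so we need $r\lesssim \epsilon\rho^2$. Disjoint bumps require the chosen points to be $\rho$-separated in $\xi$, which allows at most $\rho^{-d}$ points.

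\textbf{Counting.} The expected count of $g^{-1}Q^{-1}\Z^n_{\prim}$ in $N_r(\Mc_\phi)$ is about $Q^n r^{n-d}$, and we balance it against the separation bound $\rho^{-d}$. With $r=\epsilon\rho^2$ this gives $Q^n\epsilon^{n-d}\rho^{2(n-d)}\approx\rho^{-d}$, i.e.
\begin{equation*}
\rho=Q^{-\frac{n}{d+2(n-d)}},
\end{equation*}
so the count is $\rho^{-d}=Q^{\frac{dn}{d+2(n-d)}}$, matching the target up to the factors $\epsilon^{n-d}$ and $\delta^{n^2-1}$.

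To justify the expected count, we integrate over $g$ in the $\delta$-ball $B_\delta$ around the identity in $\SL_n(\R)$. We use Haar measure, and that ball has measure $\asymp\delta^{n^2-1}$. For a fixed $x\in Q^{-1}\Z^n_{\prim}$, the set of $g$ with $g^{-1}x\in N_r(\Mc_\phi)$ has measure $\gtrsim \delta^{n^2-1}r^{n-d}$, provided the orbit map $g\mapsto g^{-1}x$ is a submersion with controlled Jacobian. This holds for $x$ in a compact set away from the origin, since $\SL_n$ acts transitively on $\R^n\setminus\{0\}$ when $n\ge2$. Summing over the $\asymp Q^n$ primitive points in a fixed annulus, we find a $g$ with $|g^{-1}Q^{-1}\Z^n_{\prim}\cap N_r(\Mc_\phi)|\gtrsim Q^n r^{n-d}$.

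The alternative the paper hints at is Siegel's mean value theorem combined with effective equidistribution of expanding translates. Using $n\ge3$, one can control the second moment (Rogers' formula) so that lattices rarely have too many points.

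\textbf{Selecting and absorbing points.} From these near-points, we greedily select a $\rho$-separated subfamily in the $\xi$ coordinate. This loses only a constant factor, provided no $\rho$-ball in $\xi$ contains more than $O(1)$ points on average. The expected number of points per $\rho$-ball is $Q^nr^{n-d}\rho^d\approx1$ by the choice of $\rho$, and the averaging argument controls this simultaneously. One may also restrict the selection to $g$ for which the points are not clustered.

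For each selected point, add the $C^2$-small bump that moves the graph exactly onto it. This gives $\phi_g$ with $\|\phi-\phi_g\|_{C^2}\lesssim\epsilon$.

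\textbf{Main obstacle.} The hardest part is making the counting rigorous with the separation requirement. We must show that for a positive proportion of $g\in B_\delta$ the count is at least $\gtrsim Q^n r^{n-d}$ and the points are not clustered in $\xi$. A first moment alone gives a lower bound only on average. We would therefore also need a second-moment or upper bound, via Rogers' formula, valid for $n\ge3$. Combining the two via Paley--Zygmund then yields a good $g$.
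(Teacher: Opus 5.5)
Your scaling is right (cells of side $\rho=Q^{-n/(d+2(n-d))}$, normal thickness $\epsilon\rho^2$, at most one bump per cell, each of $C^2$ size $\lesssim\epsilon$), and Siegel, Rogers and Paley--Zygmund are the right tools. However, the argument as structured does not close.

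\textbf{Mismatched measures.} Your first moment is an average over Haar measure restricted to $B_\delta$. Rogers' formula computes the second moment for the Haar probability measure on all of $\SL_n(\R)/\SL_n(\Z)$. Paley--Zygmund needs both moments with respect to the same measure, so these cannot be combined to produce a good $g\in B_\delta$. Rogers does not supply a local second moment over $B_\delta$. That quantity is a pair-correlation sum dominated by nearly parallel pairs $x,y$ in the same cell; after rescaling by $Q$, a cell is a slab of normal width $\epsilon Q^{-d/(d+2(n-d))}\ll 1$. This is exactly where $g\mapsto(g^{-1}x,g^{-1}y)$ degenerates, and you give no estimate for it.

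\textbf{Clustering.} For the same reason, the claim that the averaging ``controls clustering simultaneously'' is unsupported. A first moment, or an average number of points per $\rho$-ball, says nothing about how the points of one particular lattice distribute among cells.

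\textbf{Pointwise claim.} For fixed $x$, the set $\{g\in B_\delta : g^{-1}x\in N_r(\Mc_\phi)\}$ is empty unless $x$ lies within $O(\delta)$ of $\Mc_\phi$. In that case its measure is about $\delta^{n^2-1-(n-d)}r^{n-d}$. Only the summed first moment is as you state.

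\textbf{The missing idea.} Stay global, prove a \emph{high}-probability statement, and only then intersect with the ball.
\begin{enumerate}
\item Take a $\rho$-separated family $\Theta$ of cubes. Let $F_\theta$ count primitive points of a random unimodular lattice in $Q$ times the thickening over $\theta$, and set $F=\sum_\theta F_\theta$. Since $\xi$ ranges over $[0,1]^d$, the relevant sets satisfy $\mathrm{vol}(S\cap -S)=0$, so Rogers gives $\Var=\E$ for each $F_\theta$ and for $F$.
\item Count occupied cells $Z=\sum_\theta \ind_{F_\theta>0}$ instead of points. This builds in the separation needed for disjoint bumps.
\item From $\P(F_\theta>0)\ge \E[F_\theta]^2/\E[F_\theta^2]$ you get $\E[Z]\ge (1-\zeta(n)^{-1}\epsilon^{n-d})\E[F]$.
\item Since $0\le Z\le F$, Paley--Zygmund gives $\P(Z\ge u\E[Z])\ge (1-u)^2(1-\zeta(n)^{-1}\epsilon^{n-d})^2(1+\E[F]^{-1})^{-1}$. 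This is at least $1-\kappa$ for $u=\kappa/10$, $\epsilon^{n-d}\lesssim\kappa$ and $Q$ large.
\item Choose $\kappa\asymp_n \delta^{n^2-1}$ smaller than the Haar measure of $B^G_{2\delta}(\Id)$. This forces the good set to meet the ball, using unimodularity to pass between $g$ and $g^{-1}$.
\end{enumerate}
The threshold $u\E[Z]\asymp \kappa\,\epsilon^{n-d}Q^{dn/(d+2(n-d))}$ is precisely where the factor $\delta^{n^2-1}$ in the statement comes from. A merely positive-probability bound would not suffice. That is why the $n=2$ case, which lacks Rogers' formula, needs a separate commensurator argument.
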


Here $\Mc_\phi = \{(\xi,\phi(\xi)) : \xi \in [0,1]^d\}$ is the graph of $\phi$.

\begin{rmk}
    Theorem \ref{thm:PointsOnManifolds} requires no assumptions on the derivatives of $\phi$. These assumptions are only needed to guarantee that $g\Mc_{\phi_g}$ is still a graph over some $\Omega \subseteq \R^d$; see Proposition \ref{prop:graphs}. Moreover, all dependencies in Theorem \ref{thm:PointsOnManifolds} can be made explicit.
\end{rmk}

A corresponding statement is still true in the case $n=2$ (equivalently, for curves in $\R^2$) with weaker constants. See Theorem \ref{thm:PointsOnManifoldsn=2}.

\subsection{Sharpness of the bounds}
Here we make a comment on the lower and upper bounds. Let 
\begin{equation*}
    \alpha_{\con}(p) = \frac{d}{p} + \frac{1}{p_{\con}}, \quad \alpha_{\low}(p) = \frac{d(d+1)}{2(d+2)} + \frac{1}{p} \quad \mathrm{and} \quad \alpha_{\upp}(p) = \frac{d}{p}+\frac{2}{p_{\crit}}
\end{equation*}
denote the exponents of $Q$ arising from Conjecture \ref{conj:ConjUpperBound}, the lower bound in Theorem \ref{thm:SharpSchrodinger}, and the upper bound in Theorem \ref{thm:UpperMaximalBound} respectively.

\begin{figure}[H]
    \centering
    \includegraphics[scale=0.3]{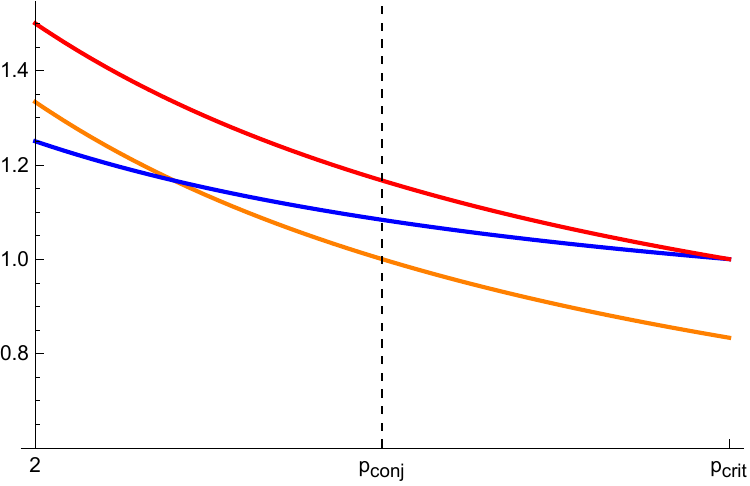}
    \caption{Graphs of $\alpha_{\con}$ (in \textcolor{orange}{orange}), $\alpha_{\upp}$ (in \textcolor{red}{red}), $\alpha_{\low}$ (in \textcolor{blue}{blue}) for $d=2$}
    \label{fig:placeholder}
\end{figure}
Let
\begin{align}
    D(p) &= \alpha_{\upp}(p) - \alpha_{\low}(p) = (d-1)\Big(\frac{1}{p} - \frac{1}{p_{\crit}}\Big), \hspace{0.5cm} 2\leq p \leq p_{\crit} \label{UBLB Gap} \\
    \Delta_{\low}(p) &= \alpha_{\low}(p) - \alpha_{\con}(p) = \frac{d(d^2 + d - 1)}{2(d+1)(d+2)} - \frac{d-1}{p}, \label{LBCB Gap} \\
    \Delta_{\upp}(p) &= \alpha_{\upp}(p) - \alpha_{\con}(p) = \frac{d^2}{2(d+1)(d+2)} > 0 \label{UBCB Gap},
\end{align}
hence in dimension $d= 1$, the gap from \eqref{UBLB Gap} is zero, and our estimates are sharp up to an $Q^\varepsilon$-factor for any exponent $2 \leq p \leq 6$, as was also shown in \cite{FuRenWan}. In higher dimensions, we have $\eqref{UBLB Gap}$ vanishing at the decoupling exponent $p = p_{\text{crit}}$ for general dimensions $d \geq 2$. We also have that $\alpha_{\low}(p) > \alpha_{\con}(p)$ if and only if $p > p_d = \frac{2(d-1)(d+1)(d+2)}{d(d^2 + d - 1)}.$ In particular, at the exponent $p_{\text{conj}}$, we have
\begin{align*}
    \Delta_{\low}(p_{\con}) = \frac{d}{2(d+1)(d+2)} > 0, \quad {\text{and }}\lim_{d \to \infty} p_d = 2.
\end{align*}
Hence the maximal estimate of Conjecture \ref{conj:ConjUpperBound} for the paraboloid at $p_{\text{conj}}$ fails over the larger class of $C^2$ hypersurfaces.

\begin{que}
    What are the correct values of $\alpha_{\upp}$ for $2 \le p < p_{\crit}$ for a general convex $C^2$-surface? 
\end{que}

\begin{rmk}
    We remark that the matching lower and upper bounds of Theorems \ref{thm:SharpSchrodinger} and \ref{thm:UpperMaximalBound} at the critical exponent $p_{\crit}$ yield a new proof of the classical bound
    \begin{equation}\label{eq:pointsonahypersurface}
        |\Sigma \cap Q^{-1}\Z^{d+1}| \lessapprox Q^{\frac{d(d+1)}{d+2}}
    \end{equation}
    for $C^2$ strictly convex hypersurfaces $\Sigma \subseteq \R^{d+1}$, as well as its optimality. The upper bound of \eqref{eq:pointsonahypersurface} was originally proven by Andrews\cite{And}. We also refer the reader to the work of Kiyohara \cite{Kiyohara2024LatticePoints} for recent lattice point counting results using the decoupling of the moment curve.
\end{rmk}

\subsection{Structure of the paper}

In Section \ref{section:lower}, we prove Theorem \ref{thm:SharpSchrodinger} using lower bounds on incidences between lattices and submanifolds. In Section \ref{section:upper}, we prove Theorem \ref{thm:UpperMaximalBound} via $\ell^2$-decoupling \cite{BouDem}. The incidence estimate Theorem \ref{thm:PointsOnManifolds} is shown in Section 4, and in Section 5 we modify this argument for the case $n=2$.

\textbf{Acknowledgments} The second author was supported by the National Science Foundation under grant No. DMS-2037851.

\section{Lower Bounds}\label{section:lower}

In this section, we prove Theorem \ref{thm:SharpSchrodinger} assuming Theorem \ref{thm:CaiZha}. Suppose throughout that $n \ge 2$, $n/2 \le d < n$, and $m=n-d$. Let $\phi \colon [0,1]^d \to \R^{m}$ be a $C^2$ function satisfying $\norm{\phi}_{C^2} \lesssim 1$.

\begin{thm}\label{thm:LowerMaximalBound}
    For all $\epsilon > 0$, there exists $Q_0 \gg_\epsilon 1$ so that the following holds. For all $Q \ge Q_0$, there exists some $\psi$ which is $\epsilon$-close to $\phi$ in the $C^2$ topology and a sequence $(b_\qbf)$ for which
    \begin{equation*}
        \norm{\sup_{\tbf \in [0,Q^2]^{m}} \left| \sum_{\qbf \in [Q]^{d}} b_\qbf e\bigl(\xbf \cdot \qbf/Q + \tbf \cdot \psi(\qbf/Q) \bigr) \right| }_{L_\xbf^p([0,Q]^d)} \gtrsim_\epsilon Q^{\frac{dn}{2(d+2m)}+\frac{m}{p}}\norm{b}_{\ell^2}.
    \end{equation*}
\end{thm}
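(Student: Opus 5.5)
Apply Theorem \ref{thm:CaiZha} to $\phi$ to obtain $\psi$, $\epsilon$-close to $\phi$ in $C^2$, and a set
\[
S = \{ \qbf \in [Q]^d : \psi(\qbf/Q) \in Q^{-1}\Z^{m}\}, \qquad |S| \gtrsim_\epsilon Q^{\frac{dn}{d+2m}}.
\]
Theorem \ref{thm:CaiZha} counts points of $\Z^d$ with $\psi(\qbf/Q)$ defined, which forces $\qbf/Q \in [0,1]^d$. So up to the harmless boundary points $\qbf$ with a zero coordinate, which we discard, $S \subseteq [Q]^d$. Take $b_\qbf = \mathbf{1}_S(\qbf)$, so that $\norm{b}_{\ell^2} = |S|^{1/2}$. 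The target bound then reads $\norm{\sup_\tbf |u|}_{L^p} \gtrsim |S| \cdot Q^{m/p}$ up to constants, since $|S|^{1/2} Q^{\frac{dn}{2(d+2m)}} \lesssim |S|$. It therefore suffices to show that $\sup_\tbf |u(\xbf,\tbf)| \gtrsim |S|$ on a set of $\xbf \in [0,Q]^d$ of measure $\gtrsim Q^m$.

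The key point is that $\tbf$ ranges over $[0,Q^2]^m$, which is long enough to realign phases. For $\qbf \in S$, write $\psi(\qbf/Q) = \mathbf{k}_\qbf/Q$ with $\mathbf{k}_\qbf \in \Z^m$. Then
\[
e\bigl(\tbf\cdot\psi(\qbf/Q)\bigr) = e\bigl(\tbf\cdot \mathbf{k}_\qbf/Q\bigr),
\]
which is $Q$-periodic in each coordinate of $\tbf$. Given $\xbf$, we choose $\tbf$ to approximately cancel the phase $\xbf\cdot\qbf/Q$. This works if the map $\qbf \mapsto \mathbf{k}_\qbf$ is correlated with $\qbf$, but in general it is not. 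A more robust approach is to use $\xbf$ itself. Taking $\xbf \in Q\Z^d$ near lattice points is not available, since $\xbf \in [0,Q]^d$ only. Instead, take $\xbf$ with $|\xbf| \le c$ small: then $|\xbf\cdot\qbf/Q| \le c\sqrt d$, and with $\tbf = 0$ we get $|u| \gtrsim |S|$. This gives only a set of measure $\sim 1$, missing the factor $Q^{m/p}$.

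To gain measure $Q^m$, I would exploit the $\tbf$-freedom more cleverly. One option is to use the higher-dimensional graph structure: write $\xbf\cdot\qbf/Q + \tbf\cdot \mathbf{k}_\qbf/Q$ and seek, for a $Q^m$-measure family of $\xbf$, a $\tbf$ making this nearly an integer for all $\qbf\in S$. A natural alternative is to first apply the counting theorem in an affinely modified form: choose $\psi$ so that the lattice points satisfy $\psi(\qbf/Q)\in Q^{-1}\Z^m$ together with a linear relation, e.g.\ replace $\psi$ by $\psi + L$ for a suitable fixed linear map $L$ with rational coefficients of denominator $1$. Then $\tbf$ shifts of size $\le Q^2$ realize translations $\xbf \mapsto \xbf + L^{T}\tbf$ along an $m$-dimensional family. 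The cleanest choice I would try: since $d \ge m$, restrict to $m$ of the $\xbf$ coordinates, and use $\tbf$ to produce phases $e(\tbf\cdot L\qbf/Q)$ that cancel $e(\xbf'\cdot \qbf'/Q)$ for $\xbf'$ ranging over an $m$-dimensional box of side $\sim Q$. Integrality of $L$ keeps $\psi+L$ with lattice values and at $C^2$ distance $\epsilon$ after scaling $L$ by $\epsilon$. Scaling by $\epsilon$ loses integrality, so one must take $L$ with entries in $\Z$ and shrink the domain instead, or take $L = \epsilon\cdot$ an integer matrix and $\tbf$ up to $Q^2/\epsilon$. This is where care is needed.

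The main obstacle is exactly this step: producing a set of $\xbf$ of measure $\gtrsim Q^m$ (rather than $O(1)$) on which some $\tbf\in[0,Q^2]^m$ aligns all phases for $\qbf\in S$, while keeping $\psi$ $\epsilon$-close in $C^2$. Once that measure bound is in hand, the estimate $\norm{\sup|u|}_{L^p}\gtrsim |S|Q^{m/p} \gtrsim_\epsilon Q^{\frac{dn}{2(d+2m)}+\frac mp}\norm b_{\ell^2}$ follows immediately. Specializing to $m=1$, $n=d+1$ gives Theorem \ref{thm:SharpSchrodinger}.
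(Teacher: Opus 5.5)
Your reduction is sound: with $b_\qbf$ the indicator of the lattice set $S$, it suffices to find a set of $\xbf \in [0,Q]^d$ of measure $\gtrsim Q^m$ on which $\sup_{\tbf}|u| \gtrsim |S|$. But that step is the whole content of the proof, and you leave it open. Both fixes you suggest fail. A nonzero linear map with integer coefficients has first derivatives of size $\ge 1$, so $\psi + L$ is never $\epsilon$-close to $\phi$ in $C^2$; restricting the domain does not change the slope. Taking $L = \epsilon \cdot (\text{integer matrix})$ instead forces $|\tbf| \sim Q^2/\epsilon$, which lies outside the allowed range $[0,Q^2]^m$. The obstruction comes from insisting that the twisted function still take values in $Q^{-1}\Z^m$, and that is not needed.

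The missing idea is a twist of slope $1/Q$, which is $C^2$-negligible, combined with times $\tbf \in Q\Z^m$. Write $\xbf = (\xbf',\xbf'') \in \R^m \times \R^{d-m}$ and use $\tilde\psi(\xbf) = \psi(\xbf) - \xbf'/Q$ in the sum. The paper does the equivalent thing: it applies Theorem~\ref{thm:CaiZha} to $\phi + \xbf'/Q$ and subtracts $\xbf'/Q$ afterwards, so the final function is exactly $\epsilon$-close to $\phi$. For $\pbf' \in [Q]^m$, evaluate at $\xbf = (\pbf',\mathbf{0})$ and $\tbf = Q\pbf' \in [0,Q^2]^m$. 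For every $\qbf \in S$,
\[
\xbf\cdot\frac{\qbf}{Q} + \tbf\cdot\tilde\psi\Bigl(\frac{\qbf}{Q}\Bigr) = \frac{\pbf'\cdot\qbf'}{Q} + \pbf'\cdot\mathbf{k}_\qbf - \frac{\pbf'\cdot\qbf'}{Q} = \pbf'\cdot\mathbf{k}_\qbf \in \Z,
\]
so $u = |S|$ at that point. The $Q$-periodicity in $\tbf$ that you noticed kills the $\psi$-phase. The twist supplies exactly the correlation between $\qbf$ and the $\tbf$-phase that you said is absent in general. The phase has $\xbf$-gradient $\qbf/Q = O(1)$, so $\Re u \gtrsim |S|$ for $\xbf$ in a neighborhood of radius $c_d$ of each $(\pbf',\mathbf{0})$. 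The $Q^m$ choices of $\pbf'$ then give the required set of measure $\gtrsim Q^m$.
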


\begin{proof}
    To start, let us write $\R^d = \R^{m} \times \R^{d-m}$, with associated coordinates $\xbf = (\xbf',\xbf'')$.
    
    Now, set $\phi_0(\xbf) = \phi(\xbf) + \xbf'/Q$. By Theorem \ref{thm:CaiZha}, for all $Q \gg_\epsilon 1$, there exists a $\psi_0$ which is $\epsilon$-close to $\phi_0$ for which the set
    \begin{equation*}
        B = \left\{ \qbf \in [Q]^d : \psi_0(\qbf/Q) \in Q^{-1}\Z^{m} \right\}
    \end{equation*}
    satisfies $|B| \gtrsim_\epsilon Q^{\frac{dn}{d+2m}}$. 
    
    Let us set $\psi(\xbf) = \psi_0(\xbf) - \xbf'/Q$ and $b_\qbf = \ind_B(\qbf)$. Clearly $\norm{\phi - \psi}_{C^2} \lesssim \epsilon$. We also let
    \begin{equation*}
        u(\xbf,\tbf) = \sum_{\qbf \in B} e\bigl( \xbf \cdot \qbf/Q + \tbf \cdot \psi(\qbf/Q) \bigr).
    \end{equation*}
    Then for $\pbf \in [Q]^d$, we have
    \begin{align*}
        u(\pbf,Q\pbf') &= \sum_{\qbf \in B} e\bigl( \pbf \cdot \qbf/Q + Q\pbf' \cdot \psi(\qbf/Q) \bigr) \\
        &= \sum_{\qbf \in B} e\bigl( \pbf \cdot \qbf/Q + Q\pbf' \cdot \psi_0(\qbf/Q) - Q\pbf' \cdot \qbf'/Q^2 \bigr) \\
        &= \sum_{\qbf \in B} e\bigl( \pbf'' \cdot \qbf''/Q \bigr)e\bigl( Q\pbf' \cdot \psi_0(\qbf/Q) \bigr) \\
        &= \sum_{\qbf \in B} e\bigl( \pbf'' \cdot \qbf''/Q \bigr),
    \end{align*}
    since $Q\pbf' \cdot \psi_0(\qbf/Q) \in \Z$ by construction of $B$. Thus if $\pbf'' = 0$, we have
    \begin{equation*}
        u(\pbf, Q\pbf') = |B|.
    \end{equation*}
    Moreover, we see that whenever $|(\xbf,\tbf) - (\pbf,Q\pbf')| \ll 1$, we have
    \begin{equation*}
        \Re(u(\xbf,\tbf)) \gtrsim |B|.
    \end{equation*}
    Thus, we have
    \begin{equation*}
        \norm{\sup_{\tbf \in [0,Q^2]^{m}} \left| u(\xbf,\tbf) \right| }_{L_\xbf^p([0,Q]^d)} \gtrsim |B| Q^{m/p} \gtrsim_\epsilon  Q^{\frac{dn}{2(d+2m)}+\frac{m}{p}}\norm{b}_{\ell^2},
    \end{equation*}
    as desired.
\end{proof}

From here, we easily deduce Theorem \ref{thm:SharpSchrodinger}.

\begin{proof}[Proof of Theorem \ref{thm:SharpSchrodinger}]
    Apply Theorem \ref{thm:LowerMaximalBound} in the case $d =n-1$, $m=1$; then
    \begin{equation*}
        \frac{dn}{2(d+2m)}+\frac{m}{p} = \frac{d(d+1)}{2(d+2)} + \frac{1}{p}.
    \end{equation*}
\end{proof}

\subsection{Some remarks}

\begin{rmk}
    Since the bound in Theorem \ref{thm:SharpSchrodinger} will later be shown to be sharp up to $\lesssim Q^\varepsilon$-loss at $p_{\crit}$, let us examine the error term in the proof of Theorem \ref{thm:LowerMaximalBound} where $\pbf'' \ne 0$, in the case $d=n-1$. We are tasked with understanding the sum
    \begin{equation*}
        \left(\sum_{\pbf'' \ne 0} \left| \sum_{\qbf \in B} e(\pbf'' \cdot \qbf''/Q) \right|^p\right)^{1/p}.
    \end{equation*}
    Since $B$ is a ``random'' set, we would expect square-root cancellation whenever $\pbf'' \ne 0$:
    \begin{equation*}
        \left| \sum_{\qbf \in B} e(\pbf'' \cdot \qbf''/Q) \right| \lessapprox \sqrt{|B|},
    \end{equation*}
    in which case
    \begin{equation*}
        \left( \sum_{\pbf'' \ne 0} \left| \sum_{\qbf \in B} e(\pbf'' \cdot \qbf''/Q) \right|^{p_{\crit}} \right)^{1/p_{\crit}} \lessapprox Q^{\frac{d-1}{p_{\crit}}} \norm{b}_{\ell^2} = Q^{\frac{d}{2}\frac{d-1}{d+2}}\norm{b}_{\ell^2},
    \end{equation*}
    which is dominated by the main term $Q^{\frac{d}{2}}\norm{b}_{\ell^2}$ coming from $\pbf'' = 0$, as is necessary for the corresponding upper bound to hold.
\end{rmk}

\begin{rmk}
    For the case of higher codimension $\frac{n}{2} \le d \le n$, it is unclear to us whether or not Theorem \ref{thm:LowerMaximalBound} should be expected to be sharp, or even what the correct upper bound to conjecture would be (in any particular category of $C^2$-submanifolds). 
    
    For example, an application of $L^6\ell^6$-decoupling for surfaces in $\R^4$ as in Bourgain-Demeter \cite{BouDem1}, using the same strategy as Section \ref{section:upper}, yields the upper bound\footnote{Note that \cite{BouDem1} is proven for $C^3$-surfaces in $\R^4$, and need not apply for $C^2$.}
    \begin{equation*}
        \norm{\sup_{\tbf \in [0,Q^2]^2} u(\xbf,\tbf)}_{L_\xbf^6([0,Q]^2)} \lessapprox Q^{15/9} \norm{b}_{\ell^6}.
    \end{equation*}
    On the other hand, Theorem \ref{thm:LowerMaximalBound} yields a sequence $(b_\qbf)$ for which
    \begin{equation*}
        \norm{\sup_{\tbf \in [0,Q^2]^2} u(\xbf,\tbf)}_{L_\xbf^6([0,Q]^2)} \gtrsim Q^{13/9} \norm{b}_{\ell^6}.
    \end{equation*}
    
    The case $d < \frac{n}{2}$ is even more mysterious, and we are unsure how best to utilize the additional degrees of freedom to ``cancel out'' oscillation in the phase.
\end{rmk}

\section{Upper Bound}\label{section:upper}
The main result of this section is the upper bound of Theorem \ref{thm:UpperMaximalBound}:
\begin{equation}
        \norm{\sup_{t \in [0,Q^2]} \left| \sum_{\qbf \in [Q]^{d}} b_\qbf e\bigl(\xbf \cdot \qbf/Q + t\psi(\qbf/Q) \bigr) \right|}_{L_\xbf^p([0,Q]^{d})} \lesssim Q^{\frac{d}{p}+\frac{d}{d+2} + \epsilon}\norm{b}_{\ell^2}, \hspace{.3cm} 2 \le p \le p_{\crit}.
\end{equation}

The estimate will pass through a localized version of the global $\ell^2L^{p_{\crit}}$-decoupling inequality of compact $C^2$-hypersurfaces (\cite{BouDem}, Theorem 1.1); the local theorem can be found in the subsequent study guide (\cite{bourgain2017study}, Theorem 5.1).

We first establish some notation on weights: 
Define the weight adapted to the unit cube
\[w(\zbf) = \Big(1+ \text{dist}\Big(\zbf, \Big[-\frac{1}{2},\frac{1}{2}\Big]^{d+1}\Big) \Big)^{-100d}
\]
and for a unit cube $Y$ centered at $\zbf_Y$, we define the weight $w_Y$ adapted to $Y$ by
\[
w_Y(\zbf) = w(\zbf - \zbf_Y).
\]
For a general set $X \subset \R^{d+1}$, we can take $\mathcal{Y}(X)$ to be minimal unit lattice cube cover of $X$ and define the weight adapted to $X$ by $w_X(\zbf) = \sum_{Y \in \mathcal{Y}(X)} w_Y(\zbf)$.
We express the weighted norm of $f$ by
\[
\Vert f \Vert_{L^p(w_{X})} = \Big(\int_{\R^{d+1}} \vert f(\zbf) \vert^p w_X(\zbf) d\zbf\Big)^{\frac{1}{p}}.
\]
\begin{thm}[Local Decoupling, \cite{bourgain2017study}]\label{BD-Decoupling}
Let $\psi: [0,1]^{d} \to \R$ be a $C^2$-function with $\Vert \psi \Vert_{c_2} \lesssim 1$ and $cI \leq \nabla^2 \psi \le CI$. Consider the hypersurface $\Sigma_\psi = \{(\xi, \psi(\xi)) : \xi \in [0,1]^d\} \subset \R^{d+1}$. For a large dyadic integer $R \in 4^{\N}$, partition $[0,1]^{d} = \bigsqcup \theta$ into cubes $\theta$ of length $\ell(\theta) = R^{-\frac{1}{2}}$. We denote by
\[
\mathcal{N}_{R^{-1}}(\theta):= \{(\xi, \psi(\xi)+ y): \xi \in \theta,\  y \in [-R^{-1},R^{-1}]\}
\]
the $R^{-1}$-neighborhood of $\theta$ over $\Sigma_\psi$. 
For any function $u: \R^{d+1} \to \C$ with Fourier support in $\mathcal{N}_{R^{-1}}([0,1]^{d})$, we define 
\[
\hat u_\theta = \hat u \mathbf{1}_{\mathcal{N}_{R^{-1}}(\theta)}.
\]
Then for any $\epsilon > 0$ and $2 \leq p \leq p_{\mathrm{crit}} = \frac{2(d+2)}{d}$, there exists a constant $C_{\epsilon, p , d} > 0$ such that the following inequality holds:
    \begin{align}
        \Vert u \Vert_{L^p(w_{B_{R}^{d+1}})} \leq C_{\epsilon,p,d,\psi} R^\epsilon \Big(\sum_{\theta} \Vert u_\theta \Vert^{2}_{L^p(w_{B_{R}^{d+1}})}\Big)^\frac{1}{2}.
    \end{align}
\end{thm}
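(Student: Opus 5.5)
The plan is to derive this local statement from the global $\ell^2$ decoupling theorem of Bourgain--Demeter for compact $C^2$ hypersurfaces with positive definite second fundamental form (\cite{BouDem}, Theorem 1.1) by a standard localization argument. Fix a Schwartz function $\eta$ on $\R^{d+1}$ with $|\eta| \gtrsim 1$ on $B(0,1)$ and $\widehat{\eta}$ supported in $B(0,1)$. Set $\eta_R(\zbf) = \eta(\zbf/R)$, so that $\widehat{\eta_R}$ is supported in $B(0,R^{-1})$. Then $u\eta_R$ has Fourier support in the $2R^{-1}$-neighborhood of $\Sigma_\psi$. Moreover $(u\eta_R)_\theta$ agrees with $u_\theta \eta_R$ up to the usual boundary fuzz between adjacent caps, which is harmless: one decouples into slightly enlarged caps and uses that each point of the neighborhood lies in $O(1)$ of them.

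First I would apply the global inequality at scale $2R$ to $u\eta_R$, giving
\[
\Vert u\eta_R\Vert_{L^p(\R^{d+1})} \lesssim_\epsilon R^\epsilon \Bigl(\sum_\theta \Vert u_\theta \eta_R\Vert_{L^p(\R^{d+1})}^2\Bigr)^{1/2}.
\]
This holds for $2\le p\le p_{\crit}$ by interpolating the $p_{\crit}$ case with the trivial $L^2$ orthogonality case. Next I would compare weights. On the left, $|\eta_R|^p \gtrsim \ind_{B_R}$. To pass from $\ind_{B_R}$ to $w_{B_R}$, I would cover $\R^{d+1}$ by translates $B_R + \zbf_j$ and apply the same inequality on each translate. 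This works because the decoupling constant is translation invariant. I would then sum with the decaying coefficients $(1+|\zbf_j|/R)^{-100d}$, using Minkowski in $\ell^{p/2}$ (note $p/2\ge1$) to pull the sum inside the square function. On the right, the rapid decay of $\eta$ gives $|\eta_R|^p \lesssim w_{B_R}$. This part is routine bookkeeping.

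The substantive point, and the step I expect to be the main obstacle, is that the global theorem must hold with only $C^2$ regularity and a uniform constant depending on $\psi$ through $c$, $C$, and the modulus of continuity of $\nabla^2\psi$. If one only wants to cite Bourgain--Demeter for the paraboloid, I would run an induction on scales.

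Let $D(R)$ denote the best decoupling constant for $\Sigma_\psi$ at scale $R$. Fix a large constant $K$. By the $C^2$ hypothesis, over a cap $\tau$ of side $K^{-1/2}$, the surface deviates from its osculating quadric by $o(K^{-1})$. After an affine parabolic rescaling of $\tau$, the rescaled surface is therefore a $C^2$ perturbation of a nondegenerate quadric. It is also uniformly convex with the same $c$, $C$, and its Hessian has an improved modulus of continuity. Decoupling $[0,1]^d$ into the caps $\tau$ costs the paraboloid constant $\lesssim_\epsilon K^{\epsilon}$, since at scale $K$ the neighborhood of $\Sigma_\psi$ sits inside a comparable neighborhood of the quadric. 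Rescaling each $\tau$ then gives
\[
D(R)\le C_\epsilon K^{\epsilon} D(R/K),
\]
and iterating $\log_K R$ times yields $D(R)\lesssim_\epsilon R^{2\epsilon}$. The delicate part is checking that the rescaled surfaces remain in a class with uniformly controlled constants. The class should be defined by the Hessian bounds $cI\le\nabla^2\le CI$ (preserved under parabolic rescaling) rather than by closeness to a fixed quadric, so that the induction closes.
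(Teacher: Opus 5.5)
The paper gives no argument for this statement; it quotes the local inequality from the Bourgain--Demeter study guide. Your main route is correct and is the standard derivation of the local form from the global one: cite \cite{BouDem}, Theorem 1.1, which is already stated for compact $C^2$ hypersurfaces with positive definite second fundamental form and for $2\le p\le p_{\mathrm{crit}}$, then localize with $\eta_R$, translates of $B_R$, and Minkowski in $\ell^{p/2}$. The induction on scales is therefore not needed.

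One caveat concerns the weights. Your bound $|\eta_R|^p\lesssim w_{B_R}$ and your coefficients $(1+|z_j|/R)^{-100d}$ refer to the usual scale-$R$ weight $(1+|z-c|/R)^{-100d}$, which is the standard weight in local decoupling. The paper's literal $w_{B_R}=\sum_{Y\in\mathcal{Y}(B_R)}w_Y$ is essentially $\mathbf{1}_{B_R}$ with unit-scale tails; it is $\lesssim R^{d+1-100d}$ at distance $\sim R$ from $B_R$. Since $|\eta_R|^p$ decays only on scale $R$, your right-hand comparison fails for that definition. What you actually prove is the scale-$R$ version. That version is all Section~\ref{section:upper} uses: there the left-hand weight is dominated by the scale-$R$ weight, and the right-hand side is evaluated only on single exponentials.

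The fallback induction, as written, does not close. The claim that splitting $[0,1]^d$ into the caps $\tau$ of side $K^{-1/2}$ costs only the paraboloid constant is false. The $o(K^{-1})$ approximation holds on each $\tau$ separately, with a different osculating quadric on each. It says nothing about separating the $\tau$'s from one another, which is just $D(K)$ for $\Sigma_\psi$ itself. Nor can a class defined only by $cI\le\nabla^2\le CI$ work when your only input is decoupling for quadrics. That class is not equicontinuous in $\nabla^2$, so at every scale it contains surfaces none of whose rescaled pieces is close to a quadric.

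The correct bookkeeping applies the quadric approximation only at the fine end, and needs no auxiliary class. Write $D_\psi$ for your $D$, let $\omega$ be the modulus of continuity of $\nabla^2\psi$, and take $\tau$ of side $(K/R)^{1/2}$. Then $D_\psi(R)\le D_\psi(R/K)\,\sup_\tau D_{\tilde\psi_\tau}(K)$, where $\tilde\psi_\tau$ is the parabolic rescaling of $\psi$ on $\tau$. Once $\omega((K/R)^{1/2})\lesssim K^{-1}$, i.e.\ $R\ge R_0(K,\omega)$, each $\tilde\psi_\tau$ lies within $O(K^{-1})$ of a quadric with eigenvalues in $[c,C]$, so $D_{\tilde\psi_\tau}(K)\le C_\epsilon K^\epsilon$. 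Iterate down to $R_0$, use the trivial bound $D_\psi(R')\le (R')^{d/4}$ for $R'<R_0$, and choose $K=K(\epsilon)$ with $\log C_\epsilon\le\epsilon\log K$. This gives $D_\psi(R)\lesssim_{\epsilon,\psi}R^{2\epsilon}$. The dependence on $\psi$ through $\omega$ is exactly the $\psi$ in $C_{\epsilon,p,d,\psi}$.
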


We will also note the local constancy property which is exploited frequently in harmonic analysis. 

\begin{prop}[\cite{FuRenWan} Prop 2.2, \cite{demeter2020fourier}]\label{local constancy}
    If $f$ has Fourier transform supported in $B_{100}(0)$ and $Y$ is a unit cube, then for $1 \leq r < s \leq \infty$,
    we have 
    \begin{align*}
        \Vert f \Vert_{L^s(Y)} \lesssim_{r,s} \Big(\int_{\mathbb{R}^n} \vert f(\ybf) \vert^r w_Y(\ybf) d\ybf\Big)^\frac{1}{r}
    \end{align*}
\end{prop}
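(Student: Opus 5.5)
The statement to prove is Proposition \ref{local constancy}, the local constancy property. The plan is to reproduce the argument via a Schwartz reproducing kernel. Fix a Schwartz function $\eta$ on $\R^n$ with $\hat\eta \equiv 1$ on $B_{100}(0)$. Since $\hat f$ is supported in $B_{100}(0)$, we have $\hat f = \hat f\,\hat\eta$, hence $f = f * \eta$. The rapid decay of $\eta$ gives
\[
|\eta(\zbf)| \lesssim_N (1+|\zbf|)^{-N}.
\]

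The first step is a pointwise bound with exponent $r$. For $1 \le r$, we write $|\eta| = |\eta|^{1/r}|\eta|^{1/r'}$ and apply Hölder's inequality, using $\int|\eta| \lesssim 1$. This gives
\[
|f(\xbf)|^r \lesssim \int |f(\ybf)|^r |\eta(\xbf-\ybf)|\,d\ybf .
\]

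The second step is to compare kernels. For $\xbf \in Y$, the distance $|\xbf - \ybf|$ is comparable to $1 + \mathrm{dist}(\ybf, Y)$ up to constants. Taking $N = 100n$ (so the decay of $\eta$ is at least as strong as the weight's), we get $|\eta(\xbf-\ybf)| \lesssim w_Y(\ybf)$ uniformly in $\xbf \in Y$. Therefore
\[
\sup_{\xbf \in Y}|f(\xbf)| \lesssim \Big(\int |f|^r w_Y\Big)^{1/r}.
\]

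Finally, since $|Y| = 1$, we have $\norm{f}_{L^s(Y)} \le \norm{f}_{L^\infty(Y)}$ for any $s$. This yields the claim for all $r < s \le \infty$.

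The only mild subtlety is the weight. It is defined in terms of distance to the cube, with exponent $100d$, where $d$ is the ambient dimension used here; this must be matched against the Schwartz tail of $\eta$, which is harmless since $\eta$ decays faster than any polynomial. Dependence on $r, s$ enters only through Hölder's inequality.
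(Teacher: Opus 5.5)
Your argument is correct and is the standard one behind the sources the paper cites; the paper gives no proof of its own beyond the citation to \cite{FuRenWan} and \cite{demeter2020fourier}. The argument is: write $f = f*\eta$, apply H\"older against $|\eta|$, dominate the Schwartz tail of $\eta$ by $w_Y$, and use $|Y|=1$ to pass from $L^\infty(Y)$ to $L^s(Y)$.

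There are two minor slips, neither affecting validity. First, $|\xbf-\ybf|$ itself is not comparable to $1+\mathrm{dist}(\ybf,Y)$, since it vanishes at $\ybf=\xbf$; but all you need is $1+|\xbf-\ybf|\ge 1+\mathrm{dist}(\ybf,Y)$ for $\xbf\in Y$, which is immediate. Second, take $\hat\eta\equiv 1$ on a neighborhood of the closed ball, so that $f=f*\eta$ holds when $\hat f$ is only a distribution.
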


Now we return to our exponential sum. 
\begin{equation}
    u(\xbf,t) = \sum_{\qbf \in [Q]^{d}} b_\qbf e\bigl(\xbf \cdot \qbf/Q + t\psi(\qbf/Q) \bigr).
\end{equation}
Fix a dyadic number $\alpha \in 2^\N$ and consider the level set 
\[
\Omega_\alpha = \left\{\xbf \in [0,Q]^{d}: \sup_{0 \leq t \leq Q^2} \vert u(\xbf,t) \vert \in [\alpha,2\alpha)\right\}.
\]
We claim the following level set estimate.
\begin{lemma}\label{lemma:LevelSetEstimateUP} 
We have
\begin{align}
\vert \Omega_\alpha \vert \lesssim_\epsilon \alpha^{-p} Q^{d+\frac{pd}{d+2} + \epsilon} \Vert b_\qbf \Vert_{\ell^2}^p, \hspace{.5cm} 2 \leq p \leq  p_{\crit}.
\end{align}
\end{lemma}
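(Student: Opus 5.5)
For each $\xbf \in \Omega_\alpha$ I would pick a time $t(\xbf) \in [0,Q^2]$ with $|u(\xbf,t(\xbf))| \ge \alpha$. The aim is to convert the measure of $\Omega_\alpha$ into an $L^p$ bound for $u$ on a space–time region of volume about $Q^{d+2}$, and then use decoupling. The plan proceeds in four steps.

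\textbf{Step 1 (local constancy).} Set $F(\ybf,s) = u(Q\ybf, Q^2 s)$. This function has Fourier support in the $Q^{-2}$-neighborhood of $\Sigma_\psi$ at the points $(\qbf/Q,\psi(\qbf/Q))$, so in the original variables $u$ has frequencies in a bounded set. Proposition \ref{local constancy} (with a harmless modulation and rescaling) then gives the following for every unit cube $Y\ni(\xbf,t(\xbf))$:
\begin{align*}
\alpha^p \le |u(\xbf,t(\xbf))|^p \le \|u\|_{L^\infty(Y)}^p \lesssim \int |u|^p w_Y .
\end{align*}
Cover $\Omega_\alpha$ by unit cubes in $\xbf$, say $|\Omega_\alpha|\lesssim \#\{Y'\}$ with $Y'$ ranging over unit cubes in $\R^d$ meeting $\Omega_\alpha$. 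To each $Y'$ attach one space–time unit cube $Y = Y'\times I$ containing a point $(\xbf,t(\xbf))$. These cubes $Y$ have distinct projections, so their weights have bounded overlap. Summing gives
\begin{align*}
\alpha^p|\Omega_\alpha| \lesssim \int_{\R^{d+1}} |u|^p w_{[0,Q]^d\times[0,Q^2]}.
\end{align*}
Here we should use periodicity (or quasi-periodicity) of $u$ in $\xbf$ with period $Q$ to replace the weights by the region $[0,Q]^d\times[0,Q^2]$.

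\textbf{Step 2 (tiling and decoupling).} Cover $[0,Q]^d\times[0,Q^2]$ by about $Q^{d}$ balls of radius $Q^2$. Only about $Q^{d}\cdot Q^{-d}\cdot\ldots$ many are genuinely distinct up to periodicity; the cleaner route is to extend in $\xbf$ to $[0,Q^2]^d$ using $Q$-periodicity, at the cost of a factor $Q^{-d}$. On each ball $B_{R}$ with $R=Q^2$, apply Theorem \ref{BD-Decoupling} after rescaling $(\xbf,t)\mapsto(\xbf/Q^2... )$. The caps $\theta$ have side $R^{-1/2}=Q^{-1}$, so each $u_\theta$ is a single term $b_\qbf e(\cdots)$ with $|u_\theta|=|b_\qbf|$. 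This yields
\begin{align*}
\|u\|_{L^p(w_{B_{Q^2}})} \lesssim_\epsilon Q^{\epsilon}\Big(\sum_\qbf |b_\qbf|^2\Big)^{1/2} |B_{Q^2}|^{1/p} = Q^{\epsilon+\frac{2(d+1)}{p}}\|b\|_{\ell^2}.
\end{align*}

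\textbf{Step 3 (periodicity).} Dividing out by $Q^{d}$ periods gives
\begin{align*}
\int_{[0,Q]^d\times[0,Q^2]}|u|^p \lesssim Q^{\epsilon}\, Q^{-d}\, Q^{2(d+1)}\,\|b\|_{\ell^2}^p = Q^{d+2+\epsilon}\|b\|^p_{\ell^2}.
\end{align*}
Combined with Step 1, this produces $|\Omega_\alpha|\lesssim \alpha^{-p}Q^{d+2+\epsilon}\|b\|^p$.

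\textbf{Step 4 (improving the exponent).} At $p=p_{\crit}$ the bound of Step 3 equals the target, since $2 = \frac{pd}{d+2}$ there. For $2\le p<p_{\crit}$, the target $Q^{d+\frac{pd}{d+2}}$ is larger than $Q^{d+2}$ only when $p>p_{\crit}$. So for $p<p_{\crit}$ the target is smaller, and Step 3 alone is not enough. To reach it, I would use the trivial bound $\alpha\le \|u\|_\infty\le Q^{d/2}\|b\|_{\ell^2}$ and run Step 3 at $p_{\crit}$:
\begin{align*}
|\Omega_\alpha|\lesssim \alpha^{-p_{\crit}}Q^{d+2+\epsilon}\|b\|^{p_{\crit}} = \alpha^{-p}Q^{d+2+\epsilon}\|b\|^{p}\,(\|b\|/\alpha)^{p_{\crit}-p}.
\end{align*}
The final factor $(\|b\|/\alpha)^{p_{\crit}-p}$ is bounded whenever $\alpha\gtrsim\|b\|$, which needs care since $\|b\|/\alpha$ may be large for small $\alpha$. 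For small $\alpha$ I would instead use the trivial bound $|\Omega_\alpha|\le Q^d$; this is at most the right-hand side whenever $\alpha\lesssim Q^{\frac{d}{d+2}}\|b\|$. Interpolating between the two regimes should give the stated exponent $\frac{pd}{d+2}$.

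\textbf{Main obstacle.} I expect the hardest step to be the bookkeeping of the two regimes in Step 4, together with the exact rescaling needed so that the caps have side $Q^{-1}$ and the weights behave under periodic extension. In particular, I would check that the dyadic choice $R=Q^2$ is compatible with Theorem \ref{BD-Decoupling}.
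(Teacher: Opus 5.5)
Your proposal is correct and follows the paper's proof essentially step for step. Local constancy and $Q$-periodicity reduce $\alpha^p|\Omega_\alpha|$ to $Q^{-d}\int|u|^p w_{[0,Q^2]^{d+1}}$. Decoupling at $R=Q^2$, with $O(1)$ frequencies per cap, gives $|\Omega_\alpha|\lesssim\alpha^{-p}Q^{d+2+\epsilon}\|b\|_{\ell^2}^p$. Finally, the $p=p_{\crit}$ case is combined with the trivial bound $|\Omega_\alpha|\le Q^d$; the paper does this in one line by writing $|\Omega_\alpha|=|\Omega_\alpha|^{p/p_{\crit}}|\Omega_\alpha|^{1-p/p_{\crit}}$.

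The only slip is in your Step 4. In the large-$\alpha$ regime the condition you need is $\alpha\ge Q^{\frac{d}{d+2}}\|b\|_{\ell^2}$, not $\alpha\gtrsim\|b\|_{\ell^2}$, because $Q^{2-\frac{pd}{d+2}}(\|b\|_{\ell^2}/\alpha)^{p_{\crit}-p}=(Q^{\frac{d}{d+2}}\|b\|_{\ell^2}/\alpha)^{p_{\crit}-p}$. This threshold is exactly complementary to the one you found for the trivial bound, so the two cases close.
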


We first show our upper bound in Theorem \ref{thm:UpperMaximalBound} follows from Lemma \ref{lemma:LevelSetEstimateUP}. 
\begin{proof}[Proof of Theorem \ref{thm:UpperMaximalBound}, assuming Lemma \ref{lemma:LevelSetEstimateUP}]
By homogeneity, it suffices to assume $\norm{b_\qbf}_{\ell^2} = 1$. 
We may express the $L^p_{\xbf}$ norm of the maximal function as 
\begin{align*}
    \norm{ \sup_{t \in [0,Q^2]} \vert u(\xbf,t) \vert}_{L_{\xbf}^p ([0,Q]^d)}^p &\leq \sum_{\alpha \in 2^\N}\int_{\Omega_\alpha} \left| \sup_{t \in [0,Q^2]} u(\xbf,t) \right|^p d\xbf +  100Q^d\\
    &\lesssim \sum_{\alpha \in 2^\N} \alpha^p \vert \Omega_\alpha \vert + 100Q^d. 
\end{align*}
where $100Q^d$ is the trivial estimate when $\sup_t \vert u(\xbf,t) \vert \in [0,2)$.

The exponential sum has $Q^d$ terms, so an application of the triangle inequality and Cauchy-Schwarz gives 
\begin{align*}
    \vert u(\xbf,t) \vert \leq \sum_{\qbf \in [Q]^d} \vert b_\qbf \vert \leq Q^{\frac{d}{2}},
\end{align*}
which implies there are $O(\log Q)$ choices of dyadic scales $\alpha$. Applying Lemma \ref{lemma:LevelSetEstimateUP} for each $\alpha \in 2^\N$ gives 
\begin{align*}
    \Big\Vert \sup_{t \in [0,Q^2]}u(\xbf,t) \Big\Vert_{L_{\xbf}^p ([0,Q])^{d}}^p \lesssim \sum _{\alpha \in 2^\N}Q^{d+\frac{pd}{d+2} + \epsilon} \lesssim Q^{d+\frac{pd}{d+2} + \epsilon}, \hspace{0.5cm} 2 \leq p \leq p_{\crit},
\end{align*}
or equivalently
\[
\Big\Vert \sup_{t \in [0,Q^2]} u(\xbf,t) \Big\Vert_{L_{\xbf}^p ([0,Q])^{d}} \lesssim Q^{\frac{d}{p} + \frac{2}{p_{\crit}} + \epsilon}   = Q^{\frac{d}{p} + \frac{d}{d+2} + \epsilon} .
\]
which proves Theorem \ref{thm:UpperMaximalBound}.
\end{proof}
It remains to prove the level set estimate in Lemma \ref{lemma:LevelSetEstimateUP}.

\begin{proof}[Proof of Lemma \ref{lemma:LevelSetEstimateUP}]
Fix a dyadic integer $\alpha \in 2^\N$. For each $\xbf \in \Omega_\alpha$, we pick a $t_\xbf$ such that we have $\vert u(\xbf, t_\xbf) \vert \in [\alpha, 2\alpha)$. Then we have
\begin{align*}
    \vert \Omega_\alpha \vert &\lesssim \alpha^{-p} \int_{\Omega_\alpha} \vert u(\xbf,t_\xbf)\vert^{p}\dxbf.
\end{align*}
By local constancy (Proposition \ref{local constancy}) at $r = p, s = \infty$ to obtain an estimate
\[
    \vert u(\xbf,t_\xbf) \vert^p \lesssim \int_\mathbb{R} \vert u(\xbf, t) \vert^p w_{t_\xbf}(t) dt,
\]
and since $1_{\Omega_\alpha}(\xbf) w_{t_\xbf}(t) \lesssim w_{[0,Q]^d \times[0,Q^2]}(\xbf,t)$, we can bound
\begin{align*}
    \vert \Omega_\alpha \vert &\lesssim \alpha^{-p}\int_{\Omega_\alpha} \int_{\R} \vert u(\xbf,t) \vert^p w_{t_\xbf}(t) dt d\xbf \\
    &\lesssim \alpha^{-p}\int_{\R^d \times \R} \vert u(\xbf,t) \vert^p w_{[0,Q]^d \times[0,Q^2]}(\xbf,t) dtd\xbf \\
    &\lesssim Q^{-d} \alpha^{-p}\int_{\R^d \times \R} \vert u(\xbf,t) \vert^p w_{[0,Q^2]^d \times[0,Q^2]}(\xbf,t) dtd\xbf
\end{align*}
where the last inequality follows from the $Q$-periodicity of the exponential sum $u(\xbf,t)$ in each direction of $\xbf \in \R^d$.
Recall 
\begin{align*}
     u(\xbf,t) = \sum_{\qbf \in [Q]^{d}} b_\qbf e\bigl(\xbf \cdot \qbf/Q + t\psi(\qbf/Q) \bigr).
\end{align*}
has frequencies $(\qbf/Q, \psi(\qbf/Q)) \in \Sigma_{\psi}$, and the points $\qbf/Q$ are separated by distance $\gtrsim Q^{-1}$, hence we apply Theorem \ref{BD-Decoupling} at scale $R = Q^2$ and obtain 
\begin{align*}
    \Vert u(\xbf,t) \Vert_{L^p(w_{[0,{Q^2}]^{d+1}})} \lesssim C_{\epsilon,p,d} Q^\epsilon \Big(\sum_{\theta} \Vert u_\theta \Vert^{2}_{L^p(w_{[0,{Q^2}]^{d+1}})}\Big)^\frac{1}{2}
\end{align*}
in the range $2 \leq p \leq p_{\crit}$, where each frequency cap $\theta$ contains $O(1)$ of the frequencies. Therefore we get
\begin{align*}
    \Vert u \Vert_{L^p(w_{[0,Q^2])^{d+1}})}&\lesssim Q^\epsilon \Bigg(\sum_{\qbf \in [Q]^d} \Big\Vert b_\qbf e\Big(x \cdot \frac{\qbf}{Q} + t \psi\Big(\frac{\qbf}{Q}\Big) \Big) \Big\Vert_{L^p(w_{[0,Q^2]^{d+1}})}^2 \Bigg)^{\frac{1}{2}} \\
    &\lesssim  Q^{\frac{2(d+1)}{p} + \epsilon}\Big(\sum_{\qbf} \vert b_\qbf\vert^2\Big)^{\frac{1}{2}}.
\end{align*}
Taking the $p$-th power, we get the following upper bound for $\vert \Omega_\alpha \vert$
\begin{align}\label{Decouplingupperboundlevelset}
    \vert \Omega_\alpha \vert \lesssim \alpha^{-p} Q^{d+2 + \epsilon} \Vert b_\qbf \Vert_{\ell^2}^p, \hspace{0.5cm} 2 \leq p \leq p_{\crit}.
\end{align}
We split $\vert \Omega_\alpha \vert = \vert \Omega_\alpha \vert^\frac{p}{p_{\crit}} \vert \Omega_\alpha \vert^{1-\frac{p}{p_{\crit}}}$, and use the estimate \eqref{Decouplingupperboundlevelset} on the first factor with $p = p_{\crit}$ and the trivial estimate $\vert \Omega_\alpha \vert \leq Q^d$ on the second factor to obtain 
\begin{align*}
    \vert \Omega_\alpha \vert &\lesssim Q^\epsilon\Big(\alpha^{-p_{\crit}}Q^{d+2} \Vert b_\qbf \Vert_{\ell^2}^{p_{\crit}} \Big)^{\frac{p}{p_{\crit}}}Q^{d-\frac{dp}{p_{\crit}}} = \alpha^{-p} Q^{d + \frac{2p}{p_{\crit}} + \epsilon} \Vert{b_\qbf}\Vert_{\ell^2}^p,
\end{align*}
and recalling $p_{\crit} = \frac{2(d+2)}{d},$ we recover the desired exponent $d +\frac{2p}{p_{\crit}} = d+\frac{pd}{d+2}$ which proves Lemma \ref{lemma:LevelSetEstimateUP}.
\end{proof}

\section{Lattice Points Near Manifolds}\label{section:counting}

Throughout, let us assume $n \ge 3$ and choose $1 \le  d,m \le n-1$ with $d+m = n$. Our goal in this section is to establish Theorem \ref{thm:PointsOnManifolds}. Define the projections
\begin{equation*}
    \pi_{\hor} \colon \R^{d} \times \R^{m} \to \R^{d} \quad \mathrm{and} \quad \pi_{\ver} \colon \R^{d} \times \R^{m} \to \R^{m}.
\end{equation*}
Given a compact convex domain $\Omega \subseteq \R^{d}$, we let $C^2(\Omega;\R^m)$ denote the space of $C^2$ functions $\Omega \to \R^m$. This space is equipped with the norm
\begin{equation*}
    \norm{\phi}_{C^2} = \max_{0 \le |\alpha| \le 2} \sup_{x \in \Omega} |D^\alpha \phi(x)|.
\end{equation*}

\subsection{Moments of Siegel transforms}

Write $G = \SL_{n}(\R)$, $\Gamma = \SL_{n}(\Z)$. Then ${G/\Gamma}$ may be identified with the space of unimodular lattices in $\R^{n}$ via the map $g\Gamma \mapsto g\Z^{n}$. This space is equipped with a canonical $G$-invariant probability measure, which we denote by $\mu$.

Suppose $f \colon \R^n \to \C$ is measurable. Its \textit{Siegel transform} $\hat{f}^{\Sie} \colon {G/\Gamma} \to \C$ is defined by
\begin{equation*}
    \hat{f}^{\Sie}(\Lambda) = \sum_{\vbf \in \Lambda_{\prim}} f(\vbf).
\end{equation*}
Note that, if $f = \ind_S$ for a measurable set $S$, then $\hat{f}^{\Sie} = |\Lambda_{\prim} \cap S|$ is the primitive lattice-point counting function.

\begin{thm}[\cite{Sie}]
    If $f \in C_c(\R^n)$, then
    \begin{equation*}
        \int_{G/\Gamma} \hat{f}^{\Sie} \dmu = \frac{1}{\zeta(n)}\int_{\R^{n}}f\dx.
    \end{equation*}
\end{thm}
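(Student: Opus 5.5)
This is Siegel's mean value theorem in its primitive-vector form. I would derive it from the classical statement for the full lattice sum via Möbius inversion. Alternatively, I would prove it directly from the transitivity of $G$ on $\R^n\setminus\{0\}$ together with uniqueness of invariant measures.

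\textbf{Step 1 (an invariant functional).} The map $f \mapsto \int_{G/\Gamma}\hat f^{\Sie}\,d\mu$ is a positive linear functional on $C_c(\R^n\setminus\{0\})$. I would first check that it is finite, i.e.\ that $\hat f^{\Sie}\in L^1(\mu)$. For $f$ supported in a ball $B_R$, the count $|\Lambda_\prim\cap B_R|$ is bounded by a constant times $\prod_i \max(1,R/\lambda_i(\Lambda))$, where the $\lambda_i(\Lambda)$ are the successive minima. Integrability of this quantity follows from the Siegel-domain description of $\mu$ and the standard estimate $\mu(\lambda_1<\varepsilon)\asymp\varepsilon^n$. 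Near $0$ nothing goes wrong, because the support of $f$ is bounded below in norm only in the case of $C_c(\R^n\setminus\{0\})$; for general $f\in C_c(\R^n)$ I would handle the value at the origin separately, noting that the sum is over primitive, hence nonzero, vectors. Since $\mu$ is $G$-invariant, the functional is invariant under $f\mapsto f\circ g$. It therefore defines a $G$-invariant Radon measure on $\R^n\setminus\{0\}=G/H$, where $H$ is the stabilizer of $e_1$.

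\textbf{Step 2 (uniqueness).} The group $G$ acts transitively on $\R^n\setminus\{0\}$ for $n\ge2$, and both $G$ and $H$ are unimodular. Hence the invariant measure on $G/H$ is unique up to scalar, and Lebesgue measure is one such measure. So the functional equals $c_n\int f\,dx$ for some constant $c_n$.

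\textbf{Step 3 (the constant).} This is the main obstacle. I would compute $c$ for the full lattice sum first, where the expected value is $\int f$: take $f=\ind_{B_R}$ and let $R\to\infty$. By Step 2 the average of $|\Lambda\cap B_R\setminus 0|$ equals $c'\,\mathrm{vol}(B_R)$. For each fixed $\Lambda$, the count $|\Lambda\cap B_R|/\mathrm{vol}(B_R)$ tends to $1$, since $\Lambda$ is unimodular. To exchange limit and integral I would use domination by $\prod_i\max(1,R/\lambda_i)/R^n$, which is bounded by $1+\lambda_1^{-(n-1)}$ and is integrable for $n\ge2$. This gives $c'=1$. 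Finally, every nonzero lattice vector is uniquely $k\vbf$ with $k\ge1$ and $\vbf$ primitive. Hence
\[
\sum_{\vbf\in\Lambda\setminus0}f(\vbf)=\sum_{k\ge1}\hat{f_k}^{\Sie}(\Lambda),\qquad f_k(\xbf)=f(k\xbf).
\]
Integrating both sides gives $\int f=\sum_k c_n k^{-n}\int f=c_n\zeta(n)\int f$, so $c_n=1/\zeta(n)$.
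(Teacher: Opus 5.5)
The paper does not prove this statement. It quotes Siegel's mean value theorem from \cite{Sie} as a black box, just as it does with Rogers' formula, so there is no internal argument to compare against.

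Your proposal is a correct, self-contained proof along the standard invariant-measure lines:
\begin{itemize}
\item $G$-invariance of $\mu$ makes $f\mapsto\int\hat f^{\Sie}\,d\mu$ an $\SL_n(\R)$-invariant Radon measure on $\R^n\setminus\{0\}\cong G/H$. Since $H$ is unimodular, this measure is $c_n$ times Lebesgue measure.
\item You normalize the full-lattice constant to $1$ by letting $R\to\infty$ in the ball counts.
\item The disjoint decomposition $\Lambda\setminus\{0\}=\bigsqcup_{k\ge1}k\Lambda_{\prim}$, with Tonelli applied to $f\ge0$, then forces $c_n\zeta(n)=1$.
\end{itemize}
The domination $|\Lambda\cap B_R|/R^n\lesssim_n 1+\lambda_1(\Lambda)^{-(n-1)}$ for $R\ge1$ is right. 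In the case where all $\lambda_i<1$ it relies on Minkowski's second theorem. Moreover $\lambda_1^{-(n-1)}\in L^1(\mu)$ because $n/(n-1)>1$.

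Two points to keep straight when you write it out:
\begin{itemize}
\item The tail bound $\mu(\lambda_1<\varepsilon)\lesssim\varepsilon^n$ is often derived from Siegel's formula itself. To avoid circularity, take it from the Siegel-set description of Haar measure, as you indicate.
\item Your sentence about the origin is garbled. The clean version is that $\int|\Lambda_{\prim}\cap B_\delta|\,d\mu\to0$ as $\delta\to0$, by dominated convergence: it is dominated by the $B_1$ count, and it tends to $0$ pointwise because $\Lambda_{\prim}$ is discrete and avoids $0$. So the functional on $C_c(\R^n)$ has no atom at the origin, and the identity extends from $C_c(\R^n\setminus\{0\})$.
\end{itemize}

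Compared with the citation, your route makes the required inputs explicit: reduction theory, Minkowski's second theorem, uniqueness of invariant measures, and $n\ge2$. It also gives the identity directly for bounded, compactly supported Borel $f$ such as indicators, since you identify the measure itself. That is exactly how the paper uses the theorem, via its Riesz-representation remark.
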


\begin{rmk}
    In view of the Riesz representation theorem, all results stated about continuous compactly supported functions also extend to general bounded and compactly supported functions, and in particular to indicator functions of sets. We will frequently make use of this fact without mention.
\end{rmk}

\begin{cor}
    If $S \subseteq \R^n$ is a bounded measurable set, then
    \begin{equation*}
        \E[\hat{\ind}_S^{\Sie}] = \frac{1}{\zeta(n)}\Vol(S).
    \end{equation*}
\end{cor}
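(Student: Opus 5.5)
The corollary follows from Siegel's theorem by approximating $\ind_S$ with continuous compactly supported functions and passing to the limit. The identity is linear in $f$, and both sides are monotone in $f$ when $f \ge 0$. So it is enough to realize $\Vol(S)$ as a limit of integrals of functions in $C_c(\R^n)$ whose Siegel transforms converge in $L^1(\mu)$ to $\hat{\ind}_S^{\Sie}$.

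First take $S = U$ open and bounded. Choose continuous functions $f_k \in C_c(U)$ with $0 \le f_k \uparrow \ind_U$ pointwise; for instance, $f_k(x) = \min(1, k\,\mathrm{dist}(x, U^c))$ cut off to compact subsets. For each lattice $\Lambda$ the sum defining $\hat{f}_k^{\Sie}(\Lambda)$ has finitely many nonzero terms, since $\Lambda \cap \overline{U}$ is finite. These sums increase to $\hat{\ind}_U^{\Sie}(\Lambda)$. Monotone convergence on $G/\Gamma$ together with Siegel's theorem then gives
\begin{equation*}
\E[\hat{\ind}_U^{\Sie}] = \lim_k \frac{1}{\zeta(n)}\int f_k \dx = \frac{1}{\zeta(n)}\Vol(U).
\end{equation*}
Measurability of $\hat{\ind}_U^{\Sie}$ is automatic, since it is a monotone limit of continuous functions. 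Applying this to a large ball $B \supseteq S$ shows in particular that $\hat{\ind}_B^{\Sie}$ is integrable.

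Next take $S$ compact. Write $\ind_S = \ind_B - \ind_{B \setminus S}$ with $B$ an open ball containing $S$. Since $B \setminus S$ is open, the previous step applies to it, and subtracting gives the formula; the subtraction is legitimate because everything is finite. Finally, for a general bounded measurable $S$, use inner and outer regularity of Lebesgue measure. Choose compact sets $K_j \subseteq S$ and open sets $U_j \supseteq S$, all inside $B$, with $\Vol(U_j \setminus K_j) \to 0$. Pointwise on $G/\Gamma$ we have $\hat{\ind}_{K_j}^{\Sie} \le \hat{\ind}_S^{\Sie} \le \hat{\ind}_{U_j}^{\Sie}$.

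The main subtlety is measurability of $\hat{\ind}_S^{\Sie}$ for arbitrary measurable $S$, which is where the sandwich is used. We may take the $K_j$ increasing and the $U_j$ decreasing. The function $\hat{\ind}_S^{\Sie}$ agrees $\mu$-almost everywhere with $\lim_j \hat{\ind}_{K_j}^{\Sie}$, because the difference of the two monotone limits is nonnegative and has integral $\lim_j \Vol(U_j \setminus K_j)/\zeta(n) = 0$. Hence $\hat{\ind}_S^{\Sie}$ is measurable modulo null sets, which suffices for computing its expectation. Its expectation is squeezed between $\Vol(K_j)/\zeta(n)$ and $\Vol(U_j)/\zeta(n)$, both of which tend to $\Vol(S)/\zeta(n)$, and this proves the corollary.
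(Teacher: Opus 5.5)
Your argument is correct and takes essentially the paper's route. The paper gives no separate proof; it relies on the preceding remark that Siegel's formula extends from $C_c(\R^n)$ to bounded compactly supported functions ``in view of the Riesz representation theorem,'' and your steps (monotone approximation for open sets, subtraction for compact sets, a regularity sandwich for general measurable $S$) are that uniqueness-of-measures argument carried out by hand, with the added benefit that you explicitly handle the measurability of $\hat{\ind}_S^{\Sie}$ modulo $\mu$-null sets, which the paper leaves implicit.
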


\begin{thm}[\cite{Rog}]\label{thm:Rog}
    If $f \in C_c(\R^n)$, then
    \begin{equation*}
        \int_{G/\Gamma} (\hat{f}^{\Sie})^2 \dmu = \frac{1}{\zeta(n)^2} \left(\int_{\R^{n}} f(x)\dx \right)^2 + \frac{1}{\zeta(n)}\int_{\R^{n}}f(x)^2\dx + \frac{1}{\zeta(n)}\int_{\R^{n}}f(x)f(-x)\dx.
    \end{equation*}
\end{thm}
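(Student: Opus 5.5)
The plan is to prove Theorem~\ref{thm:Rog} (with $n \ge 3$) by expanding the square, splitting the resulting double sum according to the linear relation between the two primitive vectors, and identifying each piece. Writing
\begin{equation*}
    (\hat f^{\Sie}(\Lambda))^2 = \sum_{\vbf,\wbf \in \Lambda_{\prim}} f(\vbf)f(\wbf),
\end{equation*}
two primitive vectors of $\Lambda$ are linearly dependent only if $\wbf = \pm \vbf$. So the sum splits into three parts: the terms with $\wbf = \vbf$, the terms with $\wbf = -\vbf$, and the linearly independent pairs. The first two parts are the Siegel transforms of $f^2$ and of $x \mapsto f(x)f(-x)$. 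Siegel's formula then yields the second and third terms $\frac{1}{\zeta(n)}\int f^2$ and $\frac{1}{\zeta(n)}\int f(x)f(-x)\dx$. It remains to show that the independent-pair contribution equals $\frac{1}{\zeta(n)^2}(\int f)^2$.

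For the independent pairs, consider the functional on $C_c(\R^n \times \R^n)$ given by
\begin{equation*}
    \nu(F) = \int_{G/\Gamma} \sum_{\substack{\vbf,\wbf \in \Lambda_{\prim} \\ \text{lin.\ indep.}}} F(\vbf,\wbf) \dmu(\Lambda).
\end{equation*}
Once $\nu$ is known to be finite on $C_c$, it is a positive Radon measure on $\R^{2n}$. It is invariant under the diagonal action of $G$, because $\mu$ is $G$-invariant. It gives no mass to the closed null set of dependent pairs $\{(x,y) : x \wedge y = 0\}$. Since $n \ge 3$, $G = \SL_n(\R)$ acts transitively on the open set of linearly independent pairs, which is a single homogeneous space $G/H$ with $H$ unimodular. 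Hence, by uniqueness of invariant measures, $\nu$ must equal $c \cdot \mathrm{Leb}_{2n}$ for a constant $c \ge 0$. Applying this to $F(x,y) = f(x)f(y)$ gives the independent-pair contribution $c(\int f)^2$.

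The step I expect to be the main obstacle is the finiteness of $\nu$, and this is exactly where $n \ge 3$ enters. I would dominate the double sum over pairs in a ball $B_R$ by $|\Lambda \cap B_R|^2 \lesssim_R \alpha(\Lambda)^2$, where $\alpha(\Lambda) = \max_i \prod_{j \le i} \lambda_j(\Lambda)^{-1}$ is Schmidt's function of the successive minima $\lambda_j(\Lambda)$. I would then invoke the standard fact, proved via reduction theory and Siegel domains, that $\alpha \in L^p(G/\Gamma)$ for all $p < n$. Since $2 < n$, this makes $\alpha^2$ integrable, so $\nu$ is locally finite.

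To identify $c$, take $f = \ind_{B_R}$ and let $R \to \infty$. For every fixed $\Lambda$, the primitive lattice point count satisfies $\hat f_R^{\Sie}(\Lambda) \sim \Vol(B_R)/\zeta(n)$ as $R \to \infty$. After normalizing by $\Vol(B_R)$, the quantity $(\hat f_R^{\Sie}/\Vol(B_R))^2$ is dominated uniformly in $R \ge 1$ by $C\alpha(\Lambda)^2 \in L^1$. Dominated convergence then gives
\begin{equation*}
    \int_{G/\Gamma} (\hat f_R^{\Sie})^2 \dmu \sim \frac{\Vol(B_R)^2}{\zeta(n)^2}.
\end{equation*}
The two diagonal contributions are only $O(\Vol(B_R))$, so the independent-pair term must be asymptotic to $\Vol(B_R)^2/\zeta(n)^2$. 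This forces $c = 1/\zeta(n)^2$. Finally, extending from indicators of balls to general $f \in C_c(\R^n)$ is automatic, since the identification $\nu = c\,\mathrm{Leb}_{2n}$ already holds as measures.
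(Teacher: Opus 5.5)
The paper gives no proof of this statement; it imports it from Rogers, whose mean-value formulas handle $k$-fold sums over lattice vectors for every $k\le n-1$, and non-primitive counts too, in a single framework. Your argument is correct, and it is a genuinely self-contained alternative in the invariance-plus-uniqueness spirit of the modern proof of Siegel's formula:

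\begin{itemize}
\item the pairs $w=\pm v$ reduce to Siegel's formula;
\item the independent pairs give a locally finite, diagonally $\SL_n(\R)$-invariant measure $\nu$ that does not charge $\{x\wedge y=0\}$;
\item transitivity on independent pairs, with unimodular stabilizer $\R^{2(n-2)}\rtimes\SL_{n-2}(\R)$, forces $\nu=c\,\mathrm{Leb}_{2n}$;
\item $c=\zeta(n)^{-2}$ follows from balls by dominated convergence.
\end{itemize}

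The price is two external inputs. The first is the reduction-theory fact $\alpha\in L^p(G/\Gamma)$ for $p<n$ (Eskin--Margulis--Mozes), which you use both for local finiteness and for the domination. The second is the pointwise asymptotic $|\Lambda_{\prim}\cap B_R|\sim\Vol(B_R)/\zeta(n)$, obtained by M\"obius inversion. The gain is conceptual: the proof shows why the formula holds, in the same homogeneous-dynamics language the paper uses in Section~\ref{section:counting}.

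One correction to your commentary: the essential use of $n\ge 3$ is transitivity, not finiteness. For $n=2$, let $v_1$ be a shortest vector of a unimodular $\Lambda$. Every primitive $w\neq\pm v_1$ then satisfies $|w|\ge|\det(v_1,w)|/|v_1|\ge 1/\lambda_1(\Lambda)$. Hence $|\Lambda_{\prim}\cap B_R|\lesssim 1+R^2$ uniformly in $\Lambda$, and $\nu$ is trivially locally finite. Only your particular domination by $|\Lambda\cap B_R|^2\lesssim\alpha(\Lambda)^2$ needs $n\ge 3$. What actually breaks for $n=2$ is that the $\SL_2(\R)$-orbits on independent pairs are the level sets of $\det(x,y)$. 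So $\nu$ is supported on the Lebesgue-null set $\{\det(x,y)\in\Z\setminus\{0\}\}$ and cannot be a multiple of Lebesgue measure. This is exactly why Section~\ref{section:counting2} has to replace Corollary~\ref{cor:variancebound} with the Athreya--Margulis and Randol estimates.
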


\begin{cor}
    If $S \subseteq \R^n$ is a bounded measurable set, then
    \begin{equation*}
        \E[(\hat{\ind}_S^{\Sie})^2] = \frac{1}{\zeta(n)^2}\Vol(S)^2 + \frac{1}{\zeta(n)}\Vol(S) + \frac{1}{\zeta(n)}\Vol(S \cap -S).
    \end{equation*}
\end{cor}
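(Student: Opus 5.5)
The plan is to derive the corollary from Theorem \ref{thm:Rog} by approximating $\ind_S$ with continuous compactly supported functions, in the spirit of the preceding remark; each term is then identified directly. Formally, take $f = \ind_S$. Then $\int f\dx = \Vol(S)$, and since $f^2 = f$ we get $\int f^2\dx = \Vol(S)$. Moreover $f(x)f(-x) = \ind_S(x)\ind_{-S}(x) = \ind_{S\cap -S}(x)$, so the last integral equals $\Vol(S\cap -S)$. Substituting these three values into Theorem \ref{thm:Rog} gives exactly the claimed identity for $\E[(\hat{\ind}_S^{\Sie})^2]$.

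The only real work is justifying the extension from $C_c(\R^n)$ to indicator functions of bounded measurable sets. First I would handle bounded open sets. Choose $f_k \in C_c(\R^n)$ with $0 \le f_k \uparrow \ind_S$ pointwise; this is possible because $S$ is open, for instance by Urysohn functions on an exhaustion of $S$ by compact sets. For each lattice $\Lambda$ the Siegel transforms $\hat{f_k}^{\Sie}(\Lambda)$ are finite sums of nonnegative terms, since $S$ is bounded, and they increase to $\hat{\ind}_S^{\Sie}(\Lambda)$. Hence their squares increase as well, and monotone convergence applies to the left side of Theorem \ref{thm:Rog}. On the right side, $f_k \to \ind_S$, $f_k^2 \to \ind_S$ and $f_k(x)f_k(-x) \to \ind_{S\cap-S}(x)$ monotonically, all dominated by $\ind_{B}$ for a ball $B \supseteq S \cup -S$. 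So the identity passes to the limit.

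For a general bounded measurable $S$, I would approximate from outside by bounded open sets $U_j \supseteq S$ with $\Vol(U_j \setminus S) \to 0$, taking the $U_j$ decreasing and inside a fixed ball. Then $\hat{\ind}_{U_j}^{\Sie} \downarrow \hat{\ind}_{K}^{\Sie}$ pointwise, where $K = \bigcap_j U_j$ satisfies $\Vol(K \setminus S) = 0$. The squares are dominated by $(\hat{\ind}_{B}^{\Sie})^2$, which is integrable by the open-set case. So dominated convergence gives the identity for $K$. It remains to show $\hat{\ind}_K^{\Sie} = \hat{\ind}_S^{\Sie}$ $\mu$-a.e. Equivalently, for a null set $N = K\setminus S$, almost every lattice has no primitive point in $N$. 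This follows from Siegel's formula (the previous corollary) applied to $N$, or to open covers of $N$ of small volume: $\E[\hat{\ind}_N^{\Sie}] = 0$, so $\hat{\ind}_N^{\Sie} = 0$ a.e. Finally, $\Vol(K\cap -K) = \Vol(S\cap-S)$ because the two sets differ by a null set.

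I expect this measure-theoretic approximation to be the only step needing care, in particular the null-set argument. The algebraic identification of the three terms is immediate.
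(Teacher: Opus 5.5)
Your proposal is correct and follows the paper, which obtains this corollary simply by taking $f=\ind_S$ in Theorem \ref{thm:Rog} and leaves the passage from $C_c(\R^n)$ to indicators to its blanket Riesz-representation remark. Your argument (monotone approximation from below for bounded open sets, then outer regularity, dominated convergence, and the null-set step via small open covers) is a valid explicit version of that extension. It also avoids identifying a second-moment measure on $\R^{2n}$, which a literal Riesz argument for this quadratic expression would require.
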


\begin{cor}\label{cor:variancebound}
    If $S \subseteq \R^{n}$ is bounded measurable with $\Vol(S \cap -S) = 0$ and $F = \hat{\ind}_S^{\Sie}$, then
    \begin{equation*}
        \Var(F) = \E[F].
    \end{equation*}
\end{cor}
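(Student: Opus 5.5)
Corollary \ref{cor:variancebound} follows by combining the two preceding corollaries and computing directly. Set $F = \hat{\ind}_S^{\Sie}$ and $V = \Vol(S)$. Since $S$ is bounded and measurable, the first corollary (the indicator version of Siegel's mean value theorem) gives
\begin{equation*}
    \E[F] = \frac{1}{\zeta(n)} V.
\end{equation*}
The corollary of Theorem \ref{thm:Rog}, applied with the hypothesis $\Vol(S \cap -S) = 0$, then gives
\begin{equation*}
    \E[F^2] = \frac{1}{\zeta(n)^2} V^2 + \frac{1}{\zeta(n)} V.
\end{equation*}

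Subtracting the square of the mean from the second moment, we get
\begin{equation*}
    \Var(F) = \E[F^2] - \E[F]^2 = \frac{1}{\zeta(n)^2}V^2 + \frac{1}{\zeta(n)}V - \frac{1}{\zeta(n)^2}V^2 = \frac{1}{\zeta(n)}V = \E[F].
\end{equation*}
This is exactly the claimed identity.

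The only point needing care is that $F$ is genuinely square-integrable, so that the variance is finite and the subtraction is legitimate. The Rogers formula requires $n \ge 3$, which is assumed throughout this section. Since $\ind_S$ is bounded with compact support, the finiteness of $\E[F^2]$ comes from the extension of Theorem \ref{thm:Rog} to indicator functions described in the preceding remark. In detail, one can sandwich $\ind_S$ between continuous compactly supported functions, or approximate it from below by such functions and pass to the limit by monotone convergence. Either way, $\E[F^2]$ is given by the displayed formula and is therefore finite. I expect no real obstacle beyond citing this approximation remark.
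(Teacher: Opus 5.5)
Your proposal is correct and matches the paper's implicit argument: substitute Siegel's mean value formula and the indicator form of Rogers' formula (with $\Vol(S \cap -S)=0$) into $\Var(F)=\E[F^2]-\E[F]^2$, relying, as the paper does, on the remark that extends these formulas from $C_c(\R^n)$ to bounded compactly supported functions. One small precision: monotone approximation from below by continuous functions only directly handles open $S$ (it still gives finiteness of $\E[F^2]$, since $S$ lies in an open ball). For arbitrary measurable $S$, the exact formula comes from the Riesz uniqueness argument for the first- and second-moment measures, not from sandwiching.
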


\subsection{Points near manifolds}

From now on, we let $\Omega_0 = [0,1]^{d}$. Let $\Dc = \{\theta\}$ denote the standard dyadic partition of $\Omega_0$ into closed cubes of side lengths $2^{-\N}$. We write $\ell(\theta)$ for the side length of a cube $\theta \in \Dc$, and $|\theta| = \ell(\theta)^{d}$ for its $d$-dimensional volume. Write
\begin{equation*}
    \Dc^\ell = \{\theta \in \Dc : \ell(\theta) = 2^{-\ell}\}
\end{equation*}
for the level-$\ell$ subpartition.

For each $\ell \in \N$, we let $\Theta^\ell \subseteq \Dc^\ell$ be a maximal $2^{-\ell}$-separated subcollection; we note that
\begin{equation}\label{eq:separatedcollectioncount}
    |\Theta^\ell| \ge 2^{-d}|\Dc^\ell| = 2^{-d}2^{d\ell}.
\end{equation}

Choose some $\phi \in C^2(\Omega_0;\R^m)$. For each $\epsilon > 0$, each $\ell \in \N$ and each $\theta \in \Dc^\ell$, we define the \textit{thickenings}
\begin{equation*}
    \Mc_\phi(\epsilon,\theta) = \left\{ (\xi,\phi(\xi) + y) : \xi \in \theta,\ y \in \left[0,\epsilon \ell(\theta)^{2}\right]^m \right\}.
\end{equation*}
Let us additionally set
\begin{equation*}
    \alpha = \frac{n}{d+2m} \quad \mathrm{and} \quad \beta = \frac{dn}{d+2m}.
\end{equation*}

\begin{prop}\label{prop:pointsnearmanifold}
    Fix $0 < \kappa < 1$ and $0 < \epsilon < (\kappa/10)^{1/m}$. Moreover set $Q = 2^{\ell/\alpha}$, and assume $Q \ge (5 \cdot 2^d)^{1/\beta}(\kappa \epsilon^m)^{-1/\beta}$. Then
    \begin{equation*}
        \P\left(\big|\bigl\{ \theta \in \Theta^\ell : \Mc_\phi(\epsilon,\theta) \cap Q^{-1}\Lambda_{\prim} \ne \emptyset \bigr\}\bigr| \ge \frac{1}{40 \cdot 2^d}\kappa \epsilon^m Q^{\beta} \right) \ge 1-\kappa.
    \end{equation*}
\end{prop}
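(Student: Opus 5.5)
Here $\Lambda$ is a $\mu$-random unimodular lattice. The plan is to apply the second-moment method to the counting function of the thickened pieces, using Siegel's mean value theorem and Rogers' variance bound (Corollary \ref{cor:variancebound}).

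\textbf{Step 1 (volumes).} For $\theta \in \Theta^\ell$, set $S_\theta = Q\,\Mc_\phi(\epsilon,\theta)$. Then $Q^{-1}\Lambda_{\prim}$ meets $\Mc_\phi(\epsilon,\theta)$ if and only if $\Lambda_{\prim} \cap S_\theta \ne \emptyset$. Fubini in the graph coordinates gives
\begin{equation*}
\Vol(S_\theta) = Q^n \, 2^{-d\ell}\,\epsilon^m 2^{-2m\ell} = \epsilon^m Q^n 2^{-\ell(d+2m)} = \epsilon^m,
\end{equation*}
since $2^{\ell(d+2m)} = Q^{\alpha(d+2m)} = Q^n$. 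Each $S_\theta$ lies in the region $y \ge 0$ above the graph. However, $S_\theta \cap -S_\theta$ need not be null, because $-S_\theta$ lies over $-Q\theta$, which is disjoint from $Q\Omega_0$ except possibly near $0$. I will handle this either by discarding the $O(1)$ cubes touching the origin, or, more simply, by using the full Rogers formula (Theorem \ref{thm:Rog}), where this term contributes at most $\Vol(S_\theta)/\zeta(n)$. Since both $\theta$ and $-\theta$ sit inside the positive orthant, $\theta \cap -\theta$ has measure zero in any case, so $\Vol(S_\theta \cap -S_\theta) = 0$.

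\textbf{Step 2 (the statistic).} Let $S = \bigcup_{\theta \in \Theta^\ell} S_\theta$, which is a disjoint union, and set $F = \hat{\ind}_S^{\Sie} = \sum_\theta F_\theta$ with $F_\theta = |\Lambda_{\prim} \cap S_\theta|$. By Siegel's theorem,
\begin{equation*}
\E F = \frac{\epsilon^m |\Theta^\ell|}{\zeta(n)} \ge \frac{\epsilon^m 2^{-d} Q^\beta}{\zeta(n)},
\end{equation*}
using \eqref{eq:separatedcollectioncount} and $2^{d\ell} = Q^{d\alpha} = Q^\beta$. Corollary \ref{cor:variancebound} gives $\Var F = \E F$. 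The quantity we actually want is $N = |\{\theta : F_\theta \ge 1\}|$. Since $N \ge F - \sum_\theta (F_\theta - 1)_+$, I will control the overcount through $\sum_\theta F_\theta(F_\theta - 1)$. By Rogers' formula applied to each $S_\theta$,
\begin{equation*}
\E\bigl[F_\theta(F_\theta - 1)\bigr] = \frac{\Vol(S_\theta)^2}{\zeta(n)^2} + \frac{\Vol(S_\theta \cap -S_\theta)}{\zeta(n)} = \frac{\epsilon^{2m}}{\zeta(n)^2}.
\end{equation*}
Summing over $\theta$ shows that the expected overcount is at most $\epsilon^m \E F$, which is a factor $\epsilon^m < \kappa/10$ smaller than $\E F$. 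Here the $2^{-\ell}$-separation of $\Theta^\ell$ is what guarantees that the $S_\theta$ are disjoint.

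\textbf{Step 3 (Chebyshev and Markov).} Chebyshev gives
\begin{equation*}
\P\bigl(F < \tfrac12 \E F\bigr) \le \frac{4}{\E F} \le \frac{\kappa}{2},
\end{equation*}
provided $\E F \ge 8/\kappa$. This is ensured by the hypothesis $Q^\beta \ge 5\cdot 2^d/(\kappa\epsilon^m)$, up to adjusting constants; the precise bookkeeping of the constants $5$ and $40$ is what I expect to be the fiddly part. Markov applied to the overcount gives
\begin{equation*}
\P\bigl(\textstyle\sum_\theta (F_\theta - 1)_+ > \tfrac14 \E F\bigr) \le 4\epsilon^m \le \frac{\kappa}{2},
\end{equation*}
using $\epsilon^m < \kappa/10$. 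Off both bad events, which together have probability at most $\kappa$, we get
\begin{equation*}
N \ge \tfrac14 \E F \ge \frac{\epsilon^m Q^\beta}{4\zeta(n)2^d} \ge \frac{1}{40\cdot 2^d}\,\kappa\,\epsilon^m Q^\beta,
\end{equation*}
since $\zeta(n) < 2$ for $n \ge 3$ and $\kappa < 1$.

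\textbf{Main obstacle.} The key difficulty is passing from the point count $F$ to the number of distinct occupied cubes $N$. The second factorial moment from Rogers' formula handles this, and it relies on disjointness and the absence of symmetric overlap, which is why the separated family $\Theta^\ell$ and the orthant location of the cubes are essential.
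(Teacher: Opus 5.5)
Your proof is correct in substance, but you close the argument differently from the paper. Both routes give the same bound $\E[Z]\ge(1-\zeta(n)^{-1}\epsilon^m)\E[F]$ on the mean number $Z$ of occupied cubes. The paper gets it from the per-cube second-moment bound $\P(F_\theta>0)\ge \E[F_\theta]^2/\E[F_\theta^2]$. You get it from the identity $Z=F-\sum_\theta(F_\theta-1)_+$ together with the factorial moment $\E[F_\theta(F_\theta-1)]=\epsilon^{2m}/\zeta(n)^2$.

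The routes part ways at the concentration step. The paper applies Paley--Zygmund to $Z$, using $Z\le F$ to get $\E[Z^2]\le\E[F^2]=\E[F]^2+\E[F]$. Paley--Zygmund gives $Z\ge u\,\E[Z]$ only with probability at least $(1-u)^2\E[Z]^2/\E[Z^2]$, which is close to $1$ only for small $u$. So the paper must take $u=\kappa/10$, and this is where the factor $\kappa$ in the threshold $\frac{1}{40\cdot 2^d}\kappa\epsilon^mQ^\beta$ comes from. You instead apply Chebyshev to $F$ (using the genuine concentration $\Var F=\E F$) and Markov to the defect $F-Z$. This gives $Z\ge c\,\E[F]$ with an absolute constant $c$ on an event of probability at least $1-\kappa$. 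Your route therefore proves a slightly stronger statement, in which $\kappa$ enters only through the hypotheses $\epsilon^m<\kappa/10$ and $\E[F]\gtrsim\kappa^{-1}$, not the threshold. The paper's route avoids the explicit defect decomposition at the cost of this factor, which only affects constants downstream.

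The constant bookkeeping you flagged does need a change of thresholds. The hypothesis on $Q$ gives only $\E[F]\ge 5/(\zeta(n)\kappa)>4.1/\kappa$, not $8/\kappa$. With your thresholds $\tfrac12\E[F]$ and $\tfrac14\E[F]$, the two failure bounds are about $0.8\zeta(n)\kappa\approx 0.96\kappa$ and $0.4\kappa$, whose sum exceeds $\kappa$. Rebalancing fixes this without touching the hypotheses:
\begin{itemize}
\item Chebyshev gives $\P(F<0.3\,\E[F])\le 1/(0.49\,\E[F])<0.5\kappa$.
\item Markov gives probability at most $4\epsilon^m<0.4\kappa$ that the defect exceeds $0.25\,\E[F]$.
\item Off these two events, $Z\ge 0.05\,\E[F]\ge 0.04\cdot 2^{-d}\epsilon^m Q^\beta$, which exceeds the required bound.
\end{itemize}

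Separately, fix the reasoning in Step 1. The set $S_\theta\cap -S_\theta$ is null because $Q\theta\subseteq[0,Q]^d$ and $-Q\theta\subseteq[-Q,0]^d$ meet in at most the origin; note $-\theta$ lies in the negative orthant, not the positive one. Your opening remark that this intersection ``need not be null'' should be deleted.
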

\begin{proof}
    To start, we compute
    \begin{equation*}
        \Vol\left( \Mc_\phi(\epsilon,\theta) \right) = |\theta|(\epsilon \ell(\theta)^{2})^m = \epsilon^m2^{-(d+2m)\ell} = \epsilon^m Q^{-n}.
    \end{equation*}
    Therefore, we have
    \begin{equation*}
        \Vol(Q\cdot \Mc_\phi(\epsilon,\theta)) = \epsilon^m.
    \end{equation*}
    Now, let us define the following functions:
    \begin{equation*}
        f_\theta = \ind_{Q\Mc_\phi(\epsilon,\theta)}, \quad f = \sum_{\theta \in \Theta^\ell} f_\theta, \quad F_\theta = \hat{f}_\theta^{\Sie}, \quad F = \sum_{\theta \in \Theta^\ell} F_\theta.
    \end{equation*}
    We write $\zeta = \zeta(n)$. By Siegel's mean value theorem, we have
    \begin{equation}\label{eq:Fexpectations}
        \E[F_\theta] = \zeta^{-1}\epsilon^m \quad \mathrm{and} \quad \E[F] = \zeta^{-1}\epsilon^m|\Theta^\ell|.
    \end{equation}
    By Corollary \ref{cor:variancebound}, since $\Omega_0 \cap -\Omega_0$ has zero $m$-volume, we have
    \begin{equation*}
        \Var(F_\theta) = \E[F_\theta] = \zeta^{-1}\epsilon^m.
    \end{equation*}
    By the second moment method and the inequality $\frac{1}{1+t^{-1}} \ge t(1-t)$, it follows that
    \begin{equation}\label{eq:Ftheta positive}
        \P(F_\theta > 0) \ge \frac{\E[F_\theta]^2}{\E[F_\theta]^2 + \Var(F_\theta)} = \frac{1}{1+\zeta\epsilon^{-m}} \ge \zeta^{-1}\epsilon^m\left( 1 - \zeta^{-1}{\epsilon^m}\right).
    \end{equation}
    Let us set $Z_\theta = \ind_{F_\theta > 0}$ and $Z = \sum_\theta Z_\theta$. Then \eqref{eq:Ftheta positive} and \eqref{eq:Fexpectations} imply
    \begin{equation}\label{eq:Z expectation}
        \E[Z] \ge \zeta^{-1}\epsilon^m\left( 1 - \zeta^{-1}{\epsilon^m}\right)|\Theta^\ell| = \left( 1 - \zeta^{-1}{\epsilon^m}\right)\E[F].
    \end{equation}
    By Paley-Zygmund, we then have for all $0 < u < 1$ that
    \begin{equation*}
        \P(Z \ge u\E[Z]) \ge (1-u)^2 \frac{\E[Z]^2}{\E[Z^2]} \ge (1-u)^2 \frac{\E[Z]^2}{\E[F^2]} \ge (1-u)^2\frac{\E[Z]^2}{\E[F]^2 + \Var(F)}.
    \end{equation*}
    But now, applying \eqref{eq:Z expectation}, and the fact that $\Var(F) = \E[F]$ by Corollary \ref{cor:variancebound}, we see that
    \begin{equation}\label{eq:PaleyZygmund}
        \frac{\E[Z]^2}{\E[F]^2 + \Var(F)} \ge (1-\zeta^{-1}\epsilon^m)^2\frac{\E[F]^2}{\E[F]^2 + \Var(F)} = (1-\zeta^{-1}\epsilon^m)^2\frac{1}{1+\E[F]^{-1}}.
    \end{equation}
    Using the inequality $\frac{1}{1+x} \ge 1-x$, we then obtain
    \begin{equation*}
        \frac{1}{1+(\zeta^{-1}\epsilon^m|\Theta^\ell|)^{-1}} \ge 1 - \frac{\zeta}{\epsilon^m|\Theta^\ell|}.
    \end{equation*}
    In total, using $1 \le \zeta \le 2$ uniformly in $n$, we see that
    \begin{equation}\label{eq:Z positive}
        \P(Z \ge u\E[Z]) \ge (1-2u)(1-2\epsilon^m)\left( 1 - \frac{2}{\epsilon^m|\Theta^\ell|} \right).
    \end{equation}
    Thus, if we choose $u = \kappa/10$, restrict to $0 < \epsilon \le (\kappa/10)^{1/m}$, and assume $|\Theta^\ell| \ge 5\kappa^{-1}\epsilon^{-m}$, then we obtain
    \begin{equation*}
        \P\left(Z \ge \frac{\kappa}{10}\E[Z]\right) \ge 1-\kappa.
    \end{equation*}
    In view of \eqref{eq:separatedcollectioncount}, we see that $|\Theta^\ell| \ge 5\kappa^{-1}\epsilon^{-m}$ holds when $Q \ge (5 \cdot 2^d \kappa^{-1}\epsilon^{-m})^{1/\beta}$. Thus, we have
    \begin{align*}
        \P\left(Z \ge \frac{\kappa}{10}\E[Z]\right) &\le \P\left(Z \ge \frac{\kappa}{20}\E[F]\right) \tag*{by \eqref{eq:Z expectation}} \\
        &\le \P\left(Z \ge \frac{\kappa}{40}\epsilon^m |\Theta^\ell|\right) \tag*{by \eqref{eq:Fexpectations}} \\
        &\le \P\left(Z \ge \frac{\kappa}{40 \cdot 2^d}\epsilon^m Q^{\beta}\right).
    \end{align*}
    Finally, we note that
    \begin{equation*}
        Z = \big|\bigl\{ \theta \in \Theta^\ell : Q \cdot \Mc_\phi(\epsilon,\theta) \cap \Lambda_{\prim} \ne \emptyset \bigr\}\bigr| = \big|\bigl\{ \theta \in \Theta^\ell : \Mc_\phi(\epsilon,\theta) \cap Q^{-1}\Lambda_{\prim} \ne \emptyset \bigr\}\bigr|.
    \end{equation*}
\end{proof}

\subsection{Perturbations}

Suppose $\phi,\kappa,\epsilon,Q$ satisfy the conditions of Proposition \ref{prop:pointsnearmanifold}. We define the set of \textit{good lattices} $\Gc_{\phi,\kappa,\epsilon,Q} \subseteq G/\Gamma$ by
\begin{equation*}
    \Gc_{\phi,\kappa,\epsilon,Q} = \left\{ \Lambda \in G/\Gamma : \big|\bigl\{ \theta \in \Theta^\ell : \Mc_\phi(\epsilon,\theta) \cap Q^{-1}\Lambda_{\prim} \ne \emptyset \bigr\}\bigr| \ge \frac{1}{40 \cdot 2^d}\kappa \epsilon^m Q^{\beta} \right\}.
\end{equation*}
By definition, we have $\mu(\Gc_{\phi,\kappa,\epsilon,Q}) \ge 1-\kappa$.

\begin{lemma}\label{lemma:bumpfunctionderivatives}
    There exists $\chi \in C^\infty(\R^d;\R^1)$ which is radially symmetric, radially decreasing, satisfies $\chi(0) = 1$, $\supp \chi \subseteq \B_1$, and with
    \begin{equation*}
        \norm{\chi}_{C^2} \le 25.
    \end{equation*}
\end{lemma}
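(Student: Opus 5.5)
The plan is to build $\chi$ explicitly as a radial profile composed with $|x|^2$: take $\chi(x) = h(|x|^2)$, where $h\colon \R \to [0,1]$ is a fixed smooth nonincreasing function with $h(s) = 1$ for $s \le 0$ and $h(s) = 0$ for $s \ge 1$. Writing the profile in terms of $s = |x|^2$ rather than $|x|$ makes smoothness at the origin automatic, since $x \mapsto |x|^2$ is a polynomial. Radial symmetry is then built in; radial monotonicity follows from $h' \le 0$; $\chi(0) = h(0) = 1$; and $\supp\chi \subseteq \{|x| \le 1\} = \B_1$. For $h$ we use the standard gluing $h(s) = \frac{g(1-s)}{g(1-s)+g(s)}$ with $g(s) = e^{-1/s}\ind_{s>0}$. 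Monotonicity of $h$ follows by differentiating: $h' \le 0$ because $g$ is nondecreasing.

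The only real work is the numerical bound $\norm{\chi}_{C^2} \le 25$. The chain rule gives
\begin{equation*}
\partial_i \chi = 2x_i h'(|x|^2), \qquad \partial_{ij}\chi = 2\delta_{ij}h'(|x|^2) + 4x_ix_j h''(|x|^2).
\end{equation*}
On the support we have $|x| \le 1$, so every derivative of order at most two is bounded by $\max\bigl(1,\ 2\norm{h'}_\infty,\ 2\norm{h'}_\infty + 4\norm{h''}_\infty\bigr)$. It therefore suffices to produce an $h$ with, for example, $\norm{h'}_\infty \le 2.5$ and $\norm{h''}_\infty \le 5$.

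The main obstacle is the explicit bound on $h''$. The exponential gluing above has fairly large second derivatives near the endpoints, and estimating its constants carefully is tedious. I would avoid this by mollifying a piecewise-quadratic transition instead. Define $h_0$ by: $h_0 = 1$ on $(-\infty, 0]$; $h_0(s) = 1 - 2s^2$ on $[0, 1/2]$; $h_0(s) = 2(1-s)^2$ on $[1/2, 1]$; and $h_0 = 0$ on $[1, \infty)$. This $h_0$ is $C^1$ and nonincreasing, with $|h_0'| \le 2$ and $|h_0''| \le 4$ almost everywhere. To obtain smoothness while keeping the support condition, first rescale $h_0$ to be supported in $s \le 1-\eta$ and equal to $1$ for $s \le \eta$, then convolve with a mollifier of width $\eta$. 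Convolution preserves monotonicity, preserves the bounds $\norm{h'}_\infty, \norm{h''}_\infty$ up to the rescaling factor $(1-2\eta)^{-2}$, and keeps $h = 1$ near $0$ and $h = 0$ for $s \ge 1$. Choosing $\eta$ small, the bounds $|h'| \le 2.5$ and $|h''| \le 5$ hold. The Hessian entries are then at most $5 + 20 = 25$, and since the $C^2$ norm as defined takes the maximum over individual partial derivatives $D^\alpha$, this gives $\norm{\chi}_{C^2} \le 25$.
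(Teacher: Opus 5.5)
Your proof is correct. It differs from the paper's in how the radial profile is chosen and how the constant $25$ is certified.

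The paper takes the standard bump $\chi(\xi)=u(|\xi|)$ with $u(t)=\exp\bigl(1+\frac{1}{t^2-1}\bigr)\ind_{|t|<1}$, and states only that $\norm{\chi}_{C^2}\le 25$ can be checked numerically. You instead mollify a $C^{1,1}$ piecewise-quadratic step. This makes $\|h'\|_\infty\le 2/(1-2\eta)$ and $\|h''\|_\infty\le 4/(1-2\eta)^2$ hold by construction. The chain-rule formula $\partial_{ij}\chi=2\delta_{ij}h'(|x|^2)+4x_ix_jh''(|x|^2)$ then gives the bound by hand, with room to spare: your diagonal bound tends to $20$ as $\eta\to 0$.

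The two constructions share the same skeleton. The paper's $u$ is itself of your form, $u(t)=h(t^2)$ with $h(s)=\exp\bigl(1-\frac{1}{1-s}\bigr)$ for $s<1$, which is exactly why it is smooth at the origin. Feeding this $h$ into your formula confirms that the paper's numerical claim holds. One finds $|h'|\le 4/e$, and the worst entry is the diagonal one, with $\sup_s|2h'(s)+4s\,h''(s)|\approx 21.1$, so the paper has less margin than you do.

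Your route gives a self-contained verification with no computer check. The paper's gives a closed-form $\chi$, but that plays no role later: Proposition \ref{prop:perturbations} uses only $\chi(0)=1$, the support condition, and the $C^2$ bound. The $e^{-1/s}$ gluing you mention first is never used and can simply be dropped.
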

\begin{proof}
    Let $u(t) = \exp(1 + \frac{1}{t^2-1})\ind_{|t| < 1}$ and $\chi(\xi) = u(|\xi|)$. Check numerically that $\norm{\chi}_{C^2} \le 25$.
\end{proof}

\begin{prop}\label{prop:perturbations}
    Suppose $\Lambda \in \Gc_{\phi,\kappa,\epsilon,Q}$. Then there exists $\phi_\Lambda \in C^2(\Omega_0;\R^m)$ with
    \begin{equation*}
        \norm{\phi - \phi_\Lambda}_{C^2} \le 100\sqrt{m}\epsilon,
    \end{equation*}
    and
    \begin{equation*}
        \left| \Mc_{\phi_\Lambda} \cap Q^{-1}\Lambda_{\prim} \right| \ge \frac{1}{40 \cdot 2^d}\kappa \epsilon^mQ^\beta.
    \end{equation*}
\end{prop}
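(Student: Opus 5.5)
For each cube $\theta \in \Theta^\ell$ with $\Mc_\phi(\epsilon,\theta) \cap Q^{-1}\Lambda_{\prim} \ne \emptyset$, the plan is to pick one point $(\xi_\theta, \phi(\xi_\theta) + y_\theta)$ of $Q^{-1}\Lambda_{\prim}$ in the thickening, with $\xi_\theta \in \theta$ and $y_\theta \in [0,\epsilon\ell(\theta)^2]^m$. Then define
\begin{equation*}
    \phi_\Lambda(\xi) = \phi(\xi) + \sum_{\theta} y_\theta\, \chi\!\left( \frac{\xi - \xi_\theta}{\ell(\theta)} \right),
\end{equation*}
where $\chi$ is the bump of Lemma \ref{lemma:bumpfunctionderivatives} and the sum runs over the good $\theta$. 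Since $\chi(0)=1$, we have $\phi_\Lambda(\xi_\theta) = \phi(\xi_\theta) + y_\theta$, provided no other bump is nonzero at $\xi_\theta$. So each good $\theta$ gives a point of $\Mc_{\phi_\Lambda} \cap Q^{-1}\Lambda_{\prim}$. Distinct $\theta$ give distinct points because the $\xi_\theta$ lie in disjoint cubes. The count therefore equals the number of good $\theta$, which is at least $\frac{1}{40\cdot 2^d}\kappa\epsilon^m Q^\beta$ because $\Lambda \in \Gc_{\phi,\kappa,\epsilon,Q}$.

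The key geometric point is disjointness of supports. The bump centered at $\xi_\theta$ is supported in the ball $\B_{\ell(\theta)}(\xi_\theta)$. The maximal collection $\Theta^\ell$ is $2^{-\ell}$-separated, meaning distinct cubes in it are at distance at least $2^{-\ell} = \ell(\theta)$. I would check, or arrange by thinning $\Theta^\ell$ or shrinking the bump radius to $\ell(\theta)/2$, that the balls $\B_{\ell(\theta)}(\xi_\theta)$ are pairwise disjoint. If the radius is rescaled, the $C^2$ constant changes by a factor of $4$ in the second-derivative term, and this should still fit inside the constant $100$. With disjoint supports, at each point at most one term of the sum is nonzero, so both the interpolation property and the norm estimate reduce to a single bump.

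For the $C^2$ bound, fix one term $y_\theta \chi((\xi-\xi_\theta)/\ell(\theta))$. Its derivatives of order $k \le 2$ are bounded by
\begin{equation*}
    |y_\theta|\,\ell(\theta)^{-k}\norm{\chi}_{C^2} \le \sqrt{m}\,\epsilon\,\ell(\theta)^{2-k}\cdot 25 \le 25\sqrt{m}\,\epsilon,
\end{equation*}
using $|y_\theta| \le \sqrt{m}\,\epsilon\ell(\theta)^2$ and $\ell(\theta) \le 1$. This is exactly why the thickening has height $\epsilon\ell(\theta)^2$: the second derivative is scale-invariant, so the bound does not blow up as $\ell(\theta) \to 0$. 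Disjoint supports give $\norm{\phi-\phi_\Lambda}_{C^2} \le 25\sqrt{m}\,\epsilon$, or at most $100\sqrt{m}\,\epsilon$ after the possible radius halving, which is the claimed bound. The main obstacle I expect is precisely this separation and support bookkeeping, including cubes near $\partial\Omega_0$ where the bump may stick out; restricting to $\Omega_0$ is harmless. Everything else is immediate from the definition of $\Gc_{\phi,\kappa,\epsilon,Q}$.
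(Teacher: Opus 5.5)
Your proposal is correct and is essentially the paper's proof. The paper uses the bump $\chi(2Q^{\alpha}(\xi-\xi_\theta))$; since $Q^{\alpha}=2^{\ell}=\ell(\theta)^{-1}$, this is exactly your rescaled bump of radius $\ell(\theta)/2$. That radius gives disjoint supports from the $2^{-\ell}$-separation of $\Theta^\ell$, and the constant $25\cdot 4\cdot\sqrt{m}\,\epsilon=100\sqrt{m}\,\epsilon$.

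The halving is not optional, though. With radius $\ell(\theta)$, bumps centred in cubes at distance exactly $2^{-\ell}$ have overlapping supports. The interpolation would still hold, but the $C^2$ bound would pick up a dimension-dependent overlap factor, so you would lose the stated constant.
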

\begin{proof}
    For each $\theta \in \Theta^\ell$ choose (if possible) some $\vbf \in Q^{-1}\Lambda_{\prim} \cap \Mc_\phi(\epsilon,\theta)$; by the assumption that $\Lambda$ is good, we may choose at least $\frac{1}{40 \cdot 2^d}\kappa\epsilon^m Q^\beta$ such vectors $\vbf$. For each $\vbf$, let us define a bump function $\chi_\vbf \colon \R^d \to [0,\infty)$ by
    \begin{equation*}
        \chi_\vbf(\xi) = \chi \left( 2Q^\alpha (\xi-\pi_{\hor}(\vbf)) \right),
    \end{equation*}
    where $\chi$ is as in Lemma \ref{lemma:bumpfunctionderivatives}. Since $\Theta^\ell$ is $Q^{-\alpha}$-separated, the functions $\chi_\vbf$ have pairwise disjoint supports. Moreover, we have by construction that
    \begin{equation*}
        \norm{\chi_\vbf}_{C^2} \le 25 \cdot 4 Q^{2\alpha}.
    \end{equation*}
    Let us now define the perturbations $\eta_\Lambda \in C^2(\Omega_0;\R^m)$ by
    \begin{equation*}
        \eta_\Lambda = \sum_{\vbf} \chi_\vbf \cdot a_\vbf,
    \end{equation*}
    where $a_\vbf = \pi_{\ver}(\vbf) - \phi(\pi_{\hor}(\vbf))$. By construction of the thickenings $\Mc_\phi(\epsilon,\theta)$, we see that
    \begin{equation*}
        |a_\vbf| \le \sqrt{m}\epsilon Q^{-2\alpha},
    \end{equation*}
    and so
    \begin{equation*}
        \norm{\eta_\Lambda}_{C^2} \le 100\sqrt{m}\epsilon.
    \end{equation*}
    Moreover, if we set $\phi_\Lambda = \phi + \eta_\Lambda$, then clearly we have $\vbf \in \Mc_{\phi_\Lambda}$ for all $\vbf$, finishing the proof.
\end{proof}

\subsection{Haar measure in exponential coordinates}

Given a matrix $A$, we define its $\ell^\infty$-norm by
\begin{equation*}
    \norm{A}_{\max} = \max_{1 \le i,j \le n}|A_{ij}|.
\end{equation*}
Given a matrix $g \in G$, and $\delta > 0$, we define
\begin{equation*}
    B_\delta^G(g) = \left\{h \in G : \norm{g-h}_{\max} < \delta \right\}.
\end{equation*}

\begin{lemma}\label{lemma:haarmeasureball}
   For all $0 < \delta \ll_n 1$, we have
    \begin{equation*}
        \mu(B_\delta^G(\Id)) \gtrsim_n \delta^{n^2-1}.
    \end{equation*}
\end{lemma}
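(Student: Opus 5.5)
The plan is to compute the measure in exponential coordinates on the Lie algebra $\mathfrak{sl}_n(\R)$, which has dimension $n^2-1$; this dimension is exactly the exponent $\delta^{n^2-1}$. There are three steps. First, show that the ball $B_\delta^G(\Id)$ embeds injectively into $G/\Gamma$. Second, show that it contains the exponential image of a cube in $\mathfrak{sl}_n(\R)$ of side comparable to $\delta$. Third, use the fact that Haar measure pulled back by $\exp$ has a continuous positive density near $0$.

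\emph{Injectivity.} Since $\Gamma = \SL_n(\Z)$ is discrete, there is $c_n>0$ with $\Gamma \cap B^G_{c_n}(\Id) = \{\Id\}$. Suppose $g,h \in B_\delta^G(\Id)$ and $g\Gamma = h\Gamma$. Then $\gamma = g^{-1}h \in \Gamma$. For $\delta \ll_n 1$ we have $\norm{g^{-1}-\Id}_{\max} \lesssim_n \delta$, for instance via the Neumann series, and hence $\norm{g^{-1}h - \Id}_{\max} \lesssim_n \delta < c_n$. This forces $\gamma = \Id$. So the projection $G \to G/\Gamma$ is injective on $B^G_\delta(\Id)$. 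Writing $\mu$ as the pushforward of the Haar measure $m_G$ on $G$, normalized so that a fundamental domain has mass $1$, we get $\mu(B_\delta^G(\Id)) = m_G(B_\delta^G(\Id))$, with the implicit normalization constant depending only on $n$.

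\emph{Containing an exponential cube.} Let $V_\delta \subseteq \mathfrak{sl}_n(\R)$ be the set of trace-zero $X$ whose off-diagonal entries and first $n-1$ diagonal entries lie in $(-\delta/(10n^2), \delta/(10n^2))$. The last diagonal entry is minus the sum of the others, so it has size $<\delta/(10n)$. Hence $\norm{X}_{\max} < \delta/(10n)$. The operator norm then satisfies $\norm{X}_{\mathrm{op}} \le n\norm{X}_{\max} < \delta/10$, so
\begin{equation*}
\norm{\exp X - \Id}_{\max} \le e^{\norm{X}_{\mathrm{op}}}-1 < \delta
\end{equation*}
for $\delta \le 1$. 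Thus $\exp(V_\delta) \subseteq B_\delta^G(\Id)$. In the coordinates given by the $n^2-1$ free entries, $V_\delta$ is a cube, so its Lebesgue measure is $(\delta/(5n^2))^{n^2-1} \gtrsim_n \delta^{n^2-1}$.

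\emph{Haar density.} The map $\exp$ is a diffeomorphism from a neighborhood $U_n$ of $0$ in $\mathfrak{sl}_n(\R)$ onto a neighborhood of $\Id$. On $U_n$, $\exp^* m_G = J(X)\,dX$ for a continuous density $J$ with $J(0)>0$; explicitly $J(X) = |\det((1-e^{-\mathrm{ad}X})/\mathrm{ad}X)|$ up to a constant. Shrinking $U_n$, we may assume $J \ge J(0)/2$ on it. For $\delta \ll_n 1$ we have $V_\delta \subseteq U_n$, so
\begin{equation*}
m_G(B_\delta^G(\Id)) \ge m_G(\exp V_\delta) \ge \tfrac{J(0)}{2}\,\mathrm{Leb}(V_\delta) \gtrsim_n \delta^{n^2-1}.
\end{equation*}

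The only genuinely delicate point is bookkeeping the normalization of $\mu$ against $m_G$ together with the injectivity radius. Both are fixed constants depending only on $n$, so they are absorbed into $\gtrsim_n$. If explicit constants were wanted, the hardest part would be a quantitative lower bound on the Haar density. For that I would use the formula for $J$ above, which gives $J(X)\ge \tfrac12 J(0)$ once $\norm{\mathrm{ad}X} \le 1/(10n^2)$.
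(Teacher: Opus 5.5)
Your proposal is correct and takes the same route as the paper. The paper's entire proof is the observation that $\exp$ is a local diffeomorphism with Jacobian $1$ at the identity from the $(n^2-1)$-dimensional space of trace-zero matrices onto a neighborhood of $\Id$. You supply the details that one line leaves implicit: injectivity of $G \to G/\Gamma$ on small balls (so that $\mu$ agrees there with normalized Haar measure), an explicit cube $V_\delta$ with $\exp(V_\delta) \subseteq B_\delta^G(\Id)$, and continuity of the Haar density in exponential coordinates.
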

\begin{proof}
    This follows from the fact that the exponential map $\exp \colon \gfrak \to G$ is a local diffeomorphism with Jacobian 1 at the identity, where $\gfrak = \{X \in \Mat_{n \times n}(\R) : \tr X = 0\}$ is an $n^2-1$-dimensional  $\R$-vector space.
\end{proof}

\subsection{Duality}

Note that if $\Lambda = g\Z^n$, then we would have
\begin{equation*}
    |\Mc_\psi \cap Q^{-1}\Lambda_{\prim}| = |g^{-1}\Mc_{\psi} \cap Q^{-1}\Z_{\prim}^n|.
\end{equation*}
Moreover, since $G$ is unimodular, the distribution of $g^{-1}$ is the same as that of $g$.

Let us choose a fundamental domain $\Fc \subseteq G$ for the $\Gamma$-action, so that each $g \in \Fc$ has minimal $\norm{\cdot}_{\max}$-norm in its $\Gamma$-orbit. Then Proposition \ref{prop:perturbations} implies the following.

\begin{prop}\label{prop:perturbations2}
    Suppose $\phi \in C^2(\Omega_0;\R^m)$, that $0 < \kappa < 1$, that $0 <\epsilon < (\kappa/10)^{1/m}$, and that $Q \ge (5 \cdot 2^d)^{1/\beta}(\kappa\epsilon^m)^{-1/\beta}$. Then there exists a subset $\Gc_{\phi,\kappa,\epsilon,Q}' \subseteq \Fc$ of measure at least $1-\kappa$ so that, for each $g \in \Gc_{\phi,\kappa,\epsilon,Q}'$, there exists $\phi_g \in C^2(\Omega_0;\R^m)$ with $\norm{\phi - \phi_g}_{C^2} \le 100\sqrt{m}\epsilon$, for which
    \begin{equation*}
        |g\Mc_{\phi_g} \cap Q^{-1}\Z_{\prim}^n| \ge \frac{1}{40 \cdot 2^d}\kappa \epsilon^m Q^\beta.
    \end{equation*}
\end{prop}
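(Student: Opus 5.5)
We will deduce Proposition \ref{prop:perturbations2} from Proposition \ref{prop:perturbations} by transporting the statement from $G/\Gamma$ to the fundamental domain $\Fc$ and then inverting. Under the identification $g\Gamma \mapsto g\Z^n$, the set $\Gc_{\phi,\kappa,\epsilon,Q}$ lifts to the subset
\begin{equation*}
\widetilde{\Gc} = \{ h \in \Fc : h\Z^n \in \Gc_{\phi,\kappa,\epsilon,Q}\}.
\end{equation*}
Since $\Fc$ is a fundamental domain, the restriction of Haar measure to $\Fc$ (normalized) pushes forward to $\mu$, so $\widetilde{\Gc}$ has measure at least $1-\kappa$ by Proposition \ref{prop:pointsnearmanifold}.

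For each $h \in \widetilde{\Gc}$, apply Proposition \ref{prop:perturbations} to $\Lambda = h\Z^n$. This gives $\phi_\Lambda$ with $\norm{\phi-\phi_\Lambda}_{C^2} \le 100\sqrt{m}\epsilon$ and
\begin{equation*}
|\Mc_{\phi_\Lambda} \cap Q^{-1}h\Z^n_{\prim}| \ge \tfrac{1}{40\cdot 2^d}\kappa\epsilon^m Q^\beta .
\end{equation*}
Note that $\Lambda_{\prim} = h\Z^n_{\prim}$, because $h$ is a linear isomorphism and primitivity is preserved under it. Applying the bijection $h^{-1}$ yields
\begin{equation*}
|h^{-1}\Mc_{\phi_\Lambda} \cap Q^{-1}\Z^n_{\prim}| = |\Mc_{\phi_\Lambda} \cap Q^{-1}h\Z^n_{\prim}|,
\end{equation*}
which is the duality identity recorded above. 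So with $g = h^{-1}$ and $\phi_g = \phi_{h\Z^n}$, the count $|g\Mc_{\phi_g} \cap Q^{-1}\Z^n_{\prim}|$ satisfies the desired lower bound.

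It remains to produce the set $\Gc'_{\phi,\kappa,\epsilon,Q} \subseteq \Fc$ of such $g$'s with measure at least $1-\kappa$. This is the only delicate point, since $\widetilde{\Gc}^{-1}$ need not lie in $\Fc$. I would handle it as follows.
\begin{enumerate}
\item By unimodularity of $G$, inversion preserves Haar measure, so $\widetilde{\Gc}^{-1}$ has measure at least $1-\kappa$.
\item For each $g = h^{-1}$, the coset $\Gamma g$ is exactly what is relevant: for $\gamma \in \Gamma$, $\gamma g\Z^n$-type quantities are unchanged because $\gamma$ preserves $\Z^n_{\prim}$. Hence $|\gamma g \Mc_{\phi_g} \cap Q^{-1}\Z^n_{\prim}| = |g\Mc_{\phi_g}\cap Q^{-1}\Z^n_{\prim}|$.
\item So the property depends only on the coset $\Gamma g$, and we may replace $g$ by the representative of its coset lying in the fundamental domain for the left $\Gamma$-action. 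This defines $\Gc'$ with measure at least $1-\kappa$.
\end{enumerate}
Equivalently, one can choose $\Fc$ to be a fundamental domain for $\Gamma\backslash G$, with minimal-norm representatives as stated, and run the argument directly there. The measure claim then follows from invariance of $\mu$ under $g \mapsto g^{-1}$, which swaps left and right cosets.

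I expect this bookkeeping of left versus right $\Gamma$-cosets under inversion to be the main (albeit minor) obstacle. All the analytic content is already contained in Propositions \ref{prop:pointsnearmanifold} and \ref{prop:perturbations}.
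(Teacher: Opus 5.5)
Your proposal is correct and follows the paper's route. You apply Proposition \ref{prop:perturbations} to each good lattice $h\Z^n$, transfer the count via the duality identity $|\Mc_\psi \cap Q^{-1}h\Z^n_{\prim}| = |h^{-1}\Mc_\psi \cap Q^{-1}\Z^n_{\prim}|$, and use unimodularity of $G$ so that $g = h^{-1}$ has the same distribution as $h$. You also note that the property is invariant under $g \mapsto \gamma g$ for $\gamma \in \Gamma$, so $\Fc$ must be taken as a fundamental domain for $\Gamma\backslash G$; this makes precise a left-versus-right coset point the paper leaves implicit.
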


\subsection{Completing the proof}

\begin{proof}[Proof of Theorem \ref{thm:PointsOnManifolds}]
    Suppose that $0 < \delta \ll_n 1$. Then Lemma \ref{lemma:haarmeasureball} implies
    \begin{equation*}
        \mu\left( B_{2\delta}^G(\Id) \right) \ge \frac{1}{6n}\delta^{n^2-1}.
    \end{equation*}
    Now, choose $\kappa \asymp_n \delta^{n^2-1}$ and assume $0 < \epsilon < (\kappa/10)^{1/m}$. Take $Q \ge (5 \cdot 2^d)^{1/\beta} (\kappa \epsilon^m)^{-1/\beta}$. By Proposition \ref{prop:perturbations2}, the set
    \begin{equation*}
        \Sc_{\phi,\delta,\kappa,\epsilon,Q} = B_{2\delta}^G(\Id) \cap \Gc_{\phi,\kappa,\epsilon,Q}'
    \end{equation*}
    has measure at least $\frac{1}{12n}\delta^{n^2-1}$, and in particular there exists some $g \in \Sc_{\phi,\delta,\kappa,\epsilon,Q}$. By definition, we have $\norm{g-\Id}_{\max} < 2\delta$, and there exists some $\phi_g \in C^2$ with $\norm{\phi-\phi_g}_{C^2} \le 100\sqrt{m}\epsilon$ and
    \begin{equation*}
        |g\Mc_{\phi_g} \cap Q^{-1}\Z_{\prim}^n| \ge \frac{1}{600 \cdot 2^dn} \delta^{n^2-1}\epsilon^m Q^{\frac{dn}{d+2m}}.
    \end{equation*}
\end{proof}

\subsection{Producing graphs}

Note that the manifolds $g\Mc_{\psi}$ as in Theorem \ref{thm:PointsOnManifolds} will have analogous \textit{qualitative} properties to that of $\psi$; in particular, if $\psi$ is uniformly convex, then so is $g\Mc_{\psi}$. However, the manifold $g\Mc_{\psi}$ need not be a graph; we can fix this as follows.

\begin{prop}\label{prop:graphs}
    Suppose we choose $\delta > 0$ sufficiently small depending on $\phi$ in Theorem \ref{thm:PointsOnManifolds}. Then (up to modifying the implicit constants appropriately) we may assume $g\Mc_{\phi_g} \subseteq \Mc_{\psi}$ for some function $\psi \in C^2([0,1]^d;\R^m)$.
\end{prop}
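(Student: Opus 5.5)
The plan is to view $g\Mc_{\phi_g}$ as the image of $[0,1]^d$ under $\Phi_g(\xi) = g(\xi,\phi_g(\xi))$ and show it is a $C^2$ graph over a domain close to $[0,1]^d$. Then restrict to the part lying over a slightly shrunken cube and rescale/extend.

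\textbf{Step 1: the horizontal map is a diffeomorphism.} Write $g = \Id + E$ with $\norm{E}_{\max} < 2\delta$. Define $F_g(\xi) = \pi_{\hor}\Phi_g(\xi) = \xi + E_{11}\xi + E_{12}\phi_g(\xi)$, where $E_{11}, E_{12}$ are the obvious blocks of $E$. Since $\norm{\phi_g}_{C^2} \le \norm{\phi}_{C^2} + 100\sqrt m\epsilon \lesssim_\phi 1$, we get
\begin{equation*}
    \norm{DF_g - \Id} \lesssim_n \delta(1+\norm{\phi}_{C^2}),
\end{equation*}
which is $< 1/2$ once $\delta \ll_{n,\phi} 1$. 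Then $F_g$ is injective on the convex set $[0,1]^d$: for $\xi \ne \xi'$,
\begin{equation*}
    |F_g(\xi)-F_g(\xi') - (\xi-\xi')| \le \tfrac12|\xi-\xi'|.
\end{equation*}
By the inverse function theorem, $F_g$ is a $C^2$ diffeomorphism onto its image $\Omega_g$. Moreover $|F_g(\xi) - \xi| \lesssim \delta$, so $\Omega_g \supseteq [c\delta, 1-c\delta]^d$ for a constant $c = c(n,\phi)$.

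\textbf{Step 2: define the graph function.} Set $\psi_g = \pi_{\ver}\circ\Phi_g\circ F_g^{-1}$ on $\Omega_g$. Then $g\Mc_{\phi_g} = \{(\eta,\psi_g(\eta)) : \eta \in \Omega_g\}$. By the chain rule and the bounds on $DF_g^{-1}$ and $D^2F_g$, we have $\norm{\psi_g - \phi}_{C^2(\Omega_g)} \lesssim_{n,\phi} \delta + \epsilon$, so $\psi_g$ is still a $C^2$-small perturbation.

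\textbf{Step 3: the main obstacle, namely the domain.} We need a function on all of $[0,1]^d$ containing the points, not just on $\Omega_g$. There are two ways to handle this.
\begin{itemize}
    \item[(a)] Restrict the count to the lattice points whose horizontal coordinate lies in $[c\delta,1-c\delta]^d \subseteq \Omega_g$, and extend $\psi_g$ from there to $[0,1]^d$ by a $C^2$ (Whitney-type) extension with comparable norm. For this to lose only a constant fraction of the points, we need the points to be roughly equidistributed. They are: by construction (Proposition~\ref{prop:pointsnearmanifold}) at most one point lies over each cube $\theta$. We can also rerun Proposition~\ref{prop:pointsnearmanifold} using only the $\theta \in \Theta^\ell$ inside $[2c\delta, 1-2c\delta]^d$. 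This only changes $|\Theta^\ell|$ by a factor $(1-4c\delta)^d \ge 1/2$, so the points are guaranteed to land over the shrunken cube.
    \item[(b)] Alternatively, first extend $\phi$ to $C^2$ on $[-1,2]^d$ with comparable norm. Then run the whole construction on this larger cube, and take $\psi$ to be the resulting graph function restricted to $[0,1]^d$.
\end{itemize}
Option (b) avoids the extension at the end and is cleaner.

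The implicit constants change only by factors depending on $n$ and $\phi$, which is the ``modification of implicit constants'' allowed in the statement. The main care required is making the dependence of $\delta$ on $\norm{\phi}_{C^2}$ explicit in Step~1.
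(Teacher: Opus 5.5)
Your option (a) is essentially the paper's argument. The paper restricts the counting to cubes $\theta \subseteq [1/4,3/4]^d$ and chooses $\delta$ small (depending on $\|\phi\|_{C^0}$) so that $g$ keeps everything over that subcube lying over $[0,1]^d$. It then appeals to continuity of the $G$-action to get the graph property. Your Steps 1--2 are a quantitative version of that appeal. Your $O(\delta)$-shrunken counting domain followed by a final $C^2$ extension is a slightly more economical variant of the fixed subdomain $[1/4,3/4]^d$.

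Option (b), which you call cleaner, does not work on its own. Proposition \ref{prop:pointsnearmanifold} only gives a lower bound on the \emph{total} number of cubes $\theta$ in whatever domain you count over. It says nothing about where those cubes sit.

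If you count over $[-1,2]^d$ and then restrict the graph function to $[0,1]^d$, every lattice point you produced may lie over $[-1,2]^d \setminus [0,1]^d$ and be discarded. Counting over $[0,1]^d$ itself does not fix this. The guaranteed count is about $\kappa\epsilon^m Q^\beta$ with $\kappa \asymp \delta^{n^2-1}$. That is far fewer than the roughly $\delta Q^\beta$ cubes in the boundary layer that $g$ can push outside $[0,1]^d$, so all the guaranteed points could be lost.

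The paper uses both ingredients together. The extension of $\phi$ to $[-1,2]^d$ serves only to make the image graph cover all of $[0,1]^d$, so that $\psi$ is defined there. The counting stays on a subcube well inside $[0,1]^d$, exactly as in your (a).

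A minor point: for $\phi$ merely $C^2$, the estimate $\|\psi_g - \phi\|_{C^2} \lesssim_{n,\phi} \delta + \epsilon$ is not right as written. The term $D^2\phi(F_g^{-1}(\eta)) - D^2\phi(\eta)$ is controlled only by the modulus of continuity of $D^2\phi$ at scale $O(\delta)$, not by a multiple of $\delta$. It is still small once $\delta$ is small depending on $\phi$, and the proposition does not require closeness in any case.
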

\begin{proof}
    In the proof of Proposition \ref{prop:pointsnearmanifold} and all results which follow, work instead with the subdomain $\Omega_1 = [1/4,3/4]^d$; this will change all implicit constants by a factor depending only on $n$. Now, we choose $\delta > 0$ sufficiently small depending on $\norm{\phi}_{C^0}$, so that
    \begin{equation}\label{eq:lyingoverdomain}
        \pi_{\hor}\left( g \left([1/4,3/4]^d \times [-\norm{\phi}_{C^0}-\epsilon,\norm{\phi}_{C^0}+\epsilon]^m\right) \right) \subseteq [0,1]^d
    \end{equation}
    whenever $\norm{g-\Id}_{\max} < \delta$.

    By continuity of the $G$-action, we have that $g\Mc_{\phi_g}$ is still a graph when $\delta$ is chosen sufficiently small depending on $\phi$. Moreover, by \eqref{eq:lyingoverdomain}, we have $\pi_{\hor}(g\Mc_{\phi_g}) \subseteq \Omega_0$. This completes the proof, by assuming that $\phi$ extends up to a smooth function on, say, $[-1,2]^d$.
\end{proof}

\subsection{Working at all scales}

In this section, we use Proposition \ref{prop:pointsnearmanifold} to produce perturbations at infinitely many scales, simultaneously.

To start, choose $\phi \in C^2(\Omega_0;\R^m)$, let $(\kappa_j)_{j \in \N}$ be a decreasing sequence with $\kappa_0 = \sum_j \kappa_j$, let $0 < \epsilon_j < \kappa_j/10$, and choose a sequence $(\ell_j)$ of integers with
\begin{equation*}
    \ell_j \ge \frac{1}{d}\log_2(\kappa_j^{-1}) + \frac{m}{d}\log_2(\epsilon_j^{-1}) + 1 + \frac{1}{d}\log_2(5).
\end{equation*}
Set $Q_j = 2^{\ell_j/\alpha}$. Then Proposition \ref{prop:pointsnearmanifold} readily implies the following.

\begin{prop}\label{prop:pointsnearmanifoldsallscales}
    In the setup above, there exists $\Gc_\phi \subseteq G/\Gamma$ with $\mu(\Gc_\phi) \ge 1-\kappa_0$, so that for all $\Lambda \in \Gc_\phi$ and all $j \in \N$, we have
    \begin{equation*}
        \left| \left\{ \theta \in \Theta^{\ell_j} : \Mc_\phi(\epsilon_j,\theta) \cap Q_j^{-1}\Lambda_{\prim} \ne \emptyset \right\} \right| \ge \frac{1}{40 \cdot 2^d}\kappa_j\epsilon_j^mQ_j^{\beta}.
    \end{equation*}
\end{prop}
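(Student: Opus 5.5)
The plan is to apply Proposition \ref{prop:pointsnearmanifold} once for each scale $j$, with parameters $\kappa_j$, $\epsilon_j$, $Q_j$, and then take a union bound over $j$. For each $j \in \N$ we set
\begin{equation*}
    \Gc_j = \Gc_{\phi,\kappa_j,\epsilon_j,Q_j},
\end{equation*}
the set of good lattices at scale $j$. Proposition \ref{prop:pointsnearmanifold} gives $\mu(\Gc_j) \ge 1-\kappa_j$ as long as its hypotheses hold at scale $j$. We then define $\Gc_\phi = \bigcap_j \Gc_j$. By countable subadditivity applied to the complements,
\begin{equation*}
    \mu\Bigl(\bigcup_j (G/\Gamma \setminus \Gc_j)\Bigr) \le \sum_j \kappa_j = \kappa_0,
\end{equation*}
so $\mu(\Gc_\phi) \ge 1-\kappa_0$. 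Every $\Lambda \in \Gc_\phi$ lies in each $\Gc_j$, and membership in $\Gc_j$ is exactly the inequality claimed at scale $j$.

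The only real content is checking the hypotheses of Proposition \ref{prop:pointsnearmanifold} at each scale. The first hypothesis is $\epsilon_j < (\kappa_j/10)^{1/m}$. This follows from $\epsilon_j < \kappa_j/10 < 1$ together with $m \ge 1$, since $x \le x^{1/m}$ for $x \in (0,1)$.

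The second hypothesis is $Q_j \ge (5\cdot 2^d\kappa_j^{-1}\epsilon_j^{-m})^{1/\beta}$. Because $Q_j = 2^{\ell_j/\alpha}$ and $\beta/\alpha = d$, we have $Q_j^\beta = 2^{d\ell_j}$. The condition therefore becomes
\begin{equation*}
    d\ell_j \ge \log_2 5 + d + \log_2\kappa_j^{-1} + m\log_2\epsilon_j^{-1}.
\end{equation*}
After dividing by $d$, this is exactly the assumed lower bound on $\ell_j$. In the proof of Proposition \ref{prop:pointsnearmanifold} this condition was only used to guarantee $|\Theta^{\ell_j}| \ge 5\kappa_j^{-1}\epsilon_j^{-m}$, which follows from \eqref{eq:separatedcollectioncount}.

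I do not expect a genuine obstacle. The one point to be careful about is that $Q_j$ need not be an integer, since $\ell_j/\alpha$ is generally not an integer. Proposition \ref{prop:pointsnearmanifold} is stated with $Q = 2^{\ell/\alpha}$ and only uses the volume identity $\Vol(Q\Mc_\phi(\epsilon,\theta)) = \epsilon^m$, so it applies verbatim for real $Q_j$. The second small point is that the sets $\Gc_j$ are measurable, so that the union bound is legitimate. This holds because each $\Gc_j$ is a superlevel set of a finite sum of indicators of the events $\{F_\theta > 0\}$, and each $F_\theta$ is a measurable Siegel transform.
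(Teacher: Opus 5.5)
Your proposal is correct and is exactly the argument the paper has in mind when it says Proposition \ref{prop:pointsnearmanifold} ``readily implies'' the statement: apply it at each scale $j$ with parameters $(\kappa_j,\epsilon_j,Q_j)$, then take a union bound over the exceptional sets, whose measures sum to at most $\kappa_0$. You also check the details the paper leaves implicit, namely that $\epsilon_j<\kappa_j/10$ gives $\epsilon_j<(\kappa_j/10)^{1/m}$, and that the lower bound on $\ell_j$ is precisely the condition $Q_j^\beta = 2^{d\ell_j} \ge 5\cdot 2^d\kappa_j^{-1}\epsilon_j^{-m}$.
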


\begin{rmk}
    Note that for each $i \ne j$, we have
    \begin{equation*}
        Q_i^{-1}\Lambda_{\prim} \cap Q_j^{-1}\Lambda_{\prim} = \emptyset,
    \end{equation*}
    since $Q_i^{-1}Q_j > 1$ is an integer when $i < j$. Thus, the points produced at each scale in Proposition \ref{prop:pointsnearmanifoldsallscales} are distinct. However, these points will only be $Q_j^{-1}$-separated, not $Q_j^{-\alpha}$-separated.
\end{rmk}

\begin{ex}
    Let us consider the case of the paraboloid $\Pc \subseteq \R^3$, given by
    \begin{equation*}
        \phi(\xi) = |\xi|^2.
    \end{equation*}
    Let us take $\kappa_j = 2^{-(j+1)}$, $\epsilon_j = \kappa_j/10$, and take $\ell_j  \ge j+5$. Then for at least half of all lattices $\Lambda \in \SL_3(\R)/\SL_3(\Z)$ and all $j \in \N$, we have
    \begin{equation*}
        \left| \left\{ \theta \in \Theta^{\ell_j} : \Mc_\phi(2^{-(j+1)}/10,\theta) \cap 2^{-\frac{4}{3}\ell_j}\Lambda_{\prim} \ne \emptyset \right\} \right| \ge \frac{1}{6400}2^{2(\ell_j-j)}.
    \end{equation*}
\end{ex}

\section{The Case of Curves}\label{section:counting2}

In the proof of Theorem \ref{thm:PointsOnManifolds}, the only place where we used the assumption $n \ge 3$ was in applying Theorem \ref{thm:Rog} to prove Corollary \ref{cor:variancebound}. The analogue of Theorem \ref{thm:Rog} is not true when $n=2$, and though explicit formulas for the second moment are available, they are not as well-behaved. For this reason, we obtain slightly weaker results than in higher dimensions. We maintain the notation of Section \ref{section:counting}, and assume throughout that $n=2$, $d=m=1$.

\subsection{Points near curves}

In this section, our goal is to prove the following analogue of Proposition \ref{prop:pointsnearmanifold}. The method of proof of this subsection is inspired by Athreya and Margulis \cite{AthMar}.

\begin{prop}\label{prop:pointsnearmanifoldn=2}
    Suppose $0 < \epsilon \ll 1$ and $Q \gg_\epsilon 1$. Then
    \begin{equation*}
        \P\left( \left| \left\{ \theta \in \Theta^\ell : \Mc_\phi(\epsilon,\theta) \cap Q^{-1}\Lambda_{\prim} \ne \emptyset \right\} \right| \gtrsim \epsilon Q^{2/3} \right) \ge \frac{1}{64}.
    \end{equation*}
\end{prop}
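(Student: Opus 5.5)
Here $n=2$, $d=m=1$, so $\alpha=2/3$, $\beta=2/3$ and $Q=2^{3\ell/2}$. As in Proposition \ref{prop:pointsnearmanifold}, $|\Theta^\ell|\asymp Q^{2/3}$, and each thickening $Q\Mc_\phi(\epsilon,\theta)$ has area $\epsilon$. The plan is to replace Rogers' formula (Theorem \ref{thm:Rog}) by a direct second-moment bound in $\SL_2(\R)/\SL_2(\Z)$, at the cost of a worse constant. Put $F_\theta=\hat\ind^{\Sie}_{Q\Mc_\phi(\epsilon,\theta)}$, $F=\sum_\theta F_\theta$, $Z_\theta=\ind_{F_\theta>0}$ and $Z=\sum_\theta Z_\theta$. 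Siegel's theorem still holds for $n=2$, so $\E[F]=\zeta(2)^{-1}\epsilon|\Theta^\ell|\asymp \epsilon Q^{2/3}$. Paley--Zygmund then reduces everything to two estimates:
\begin{equation*}
\E[Z]\gtrsim \epsilon|\Theta^\ell|, \qquad \E[Z^2]\lesssim (\epsilon|\Theta^\ell|)^2 .
\end{equation*}
Given these, $\P(Z\ge \tfrac12\E[Z])\ge \tfrac14\E[Z]^2/\E[Z^2]$, and with constants tracked this should be at least $1/64$.

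For the first moment, I would avoid the second moment of a single $F_\theta$. Each $S_\theta=Q\Mc_\phi(\epsilon,\theta)$ is a thin curved strip of area $\epsilon$ and length $\asymp Q^{1/3}$. Following Athreya--Margulis, I would bound $\E[F_\theta\ind_{F_\theta\ge 2}]$ directly. If a unimodular lattice has two primitive vectors $v,w$ in $S_\theta$, then $v,w$ are non-parallel (the strip avoids $-S_\theta$ and lies near a curve), so $1\le|\det(v,w)|\lesssim |S_\theta|$-type quantities. More usefully, the pair spans a sublattice of covolume $\ge1$ while both vectors lie in a region of area $\epsilon$. Averaging over $\SL_2(\R)/\SL_2(\Z)$, the pair count unfolds to an integral over pairs $(v,w)\in S_\theta^2$ weighted by $\sum_{k\ge1}\phi(k)/k^{2}$-type densities over $|\det(v,w)|=k$. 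So I expect
\begin{equation*}
\E[F_\theta(F_\theta-1)]\lesssim \int_{S_\theta\times S_\theta}\ind_{|\det(v,w)|\ge1}\,dv\,dw\lesssim \epsilon^2 .
\end{equation*}
Hence $\E[Z_\theta]\ge \E[F_\theta]-\E[F_\theta(F_\theta-1)]\ge \zeta(2)^{-1}\epsilon-C\epsilon^2\gtrsim\epsilon$ for $\epsilon\ll1$.

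For the second moment, $\E[Z^2]\le \E[Z]+\sum_{\theta\ne\theta'}\P(F_\theta>0,F_{\theta'}>0)$. Each off-diagonal term is at most $\E[\#\{(v,w)\in \Lambda_\prim^2: v\in S_\theta,\, w\in S_{\theta'}\}]$, which splits into two parts. Pairs with $w=\pm v$ do not occur, since the strips are disjoint and avoid their negatives. For linearly independent pairs, the $\SL_2$ pair-correlation formula (sum over determinants $k$ with density $\asymp$ Lebesgue measure on $\R^2\times\R^2$ restricted to $|\det|\ge 1$ and summed against $\varphi(k)/k^{3}$-type weights) gives the bound $\lesssim \Vol(S_\theta)\Vol(S_{\theta'})=\epsilon^2$. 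Summing over pairs yields $\E[Z^2]\lesssim \epsilon|\Theta^\ell|+\epsilon^2|\Theta^\ell|^2$, which is $\lesssim(\epsilon|\Theta^\ell|)^2$ once $Q\gg_\epsilon1$.

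The main obstacle is this independent-pair estimate in dimension $2$. There the Siegel--Rogers formula acquires extra terms for pairs of linearly dependent vectors and for pairs spanning small-index sublattices, and the pair-correlation density is not exactly Lebesgue. I would handle it by bounding the pair sum explicitly: parametrize $\Lambda=g\Z^2$, count $(v,w)=(ga,gb)$ with $a,b$ primitive and $\det(a,b)=k$, and use that $\{(a,b):\det=k\}$ splits into $\asymp k\prod(1+1/p)$ orbits of $\SL_2(\Z)$. Unfolding then gives $\sum_k (\#\text{orbits})\cdot k^{-?}$ times Lebesgue measure on the set $\{(v,w)\in S_\theta\times S_{\theta'} : \det(v,w)=k\}$. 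The resulting bound is a convergent sum times $\epsilon^2$, with absolute constants that are worse than those from Rogers' formula.
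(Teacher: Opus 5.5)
Your reduction is sound and differs from the paper's route: Siegel for $\E[F]$, the Bonferroni bound $\E[Z_\theta]\ge\E[F_\theta]-\E[F_\theta(F_\theta-1)]$, then $\E[Z^2]\le\E[Z]+\sum_{\theta\ne\theta'}\E[F_\theta F_{\theta'}]$ and Paley--Zygmund. There is, however, a genuine gap at the one step that carries all the content: the bounds $\E[F_\theta(F_\theta-1)],\ \E[F_\theta F_{\theta'}]\lesssim\epsilon^2$. You only assert these ("I expect"), and the sketch you give would not produce them.

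Carried out correctly, the unfolding goes as follows. The $\SL_2(\Z)$-orbits of pairs $(a,b)$ of primitive vectors with $\det(a,b)=k\ne0$ are represented by $(e_1,(j,k)^{T})$ with $j \bmod |k|$ and $\gcd(j,k)=1$. So there are $\varphi(|k|)$ orbits, not $\asymp k\prod_{p\mid k}(1+1/p)$. The action is free, and each orbit unfolds to $\zeta(2)^{-1}|k|^{-1}\int\int f(v)g(w)\,\delta(\det(v,w)-k)\,dv\,dw$, with $\delta$ the Dirac mass. Hence
\[
\E[F_\theta F_{\theta'}]=\frac{1}{\zeta(2)}\sum_{k\ne 0}\frac{\varphi(|k|)}{|k|}\int_{S_\theta}\int_{S_{\theta'}}\delta(\det(v,w)-k)\,dw\,dv \qquad (\theta\ne\theta'),
\]
and likewise for $\E[F_\theta(F_\theta-1)]$; the $w=-v$ term vanishes since $\Area(S_\theta\cap -S_\theta)=0$.

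Two consequences follow. First, the weights $\varphi(|k|)/|k|$ do not decay, so nothing comes from a convergent sum in $k$, and your $\varphi(k)/k^2$, $\varphi(k)/k^3$ guesses are off. The bound must instead come from $\asymp\epsilon Q^{2/3}$ levels $k$ each contributing a small piece. Second, this pair measure is singular, supported on the hypersurfaces $\{\det=k\}$, and is \emph{not} dominated by Lebesgue measure on $\{|\det|\ge1\}$ in general. For example, let $S$ be a small disc around $v_0$ and $S'$ a strip of width $\eta/|v_0|$ about a segment of the line $\{w:\det(v_0,w)=1\}$. Then the $k=1$ term alone exceeds $\Area(S)\Area(S')$ by a factor $\asymp\eta^{-1}$. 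So your inequality $\E[F_\theta(F_\theta-1)]\lesssim\int_{S_\theta\times S_\theta}\ind_{|\det|\ge1}$ is false as a general principle, and it needs the geometry of the strips.

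The missing idea is that $S_{\theta'}$ is thick compared with the spacing $1/|v|$ of the level lines $\{w:\det(v,w)=k\}$. Write $w=(Q\xi,Q\phi(\xi)+Qy)$ with $\xi\in\theta'$ and $0\le y\le\epsilon\ell(\theta')^2$, so $dw=Q^2\,d\xi\,dy$. For fixed $v$ and $\xi$, the map $y\mapsto\det(v,w)$ is affine with slope $Qv_1$ and sweeps an interval of length $|v_1|\epsilon Q^{-1/3}$. Counting integers in that interval gives
\[
\sum_{k\ne0}\int_{S_{\theta'}}\delta(\det(v,w)-k)\,dw\le Q^2\ell(\theta')\Big(\epsilon\ell(\theta')^2+\frac{1}{Q|v_1|}\Big)=\epsilon+\frac{Q^{1/3}}{|v_1|}.
\]
You must restrict to $\theta\subseteq[1/4,3/4]$: for $\theta$ within $O(Q^{-2/3})$ of $0$ one has $|v_1|\lesssim Q^{1/3}$, and the error term is not small. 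After restricting, $|v_1|\ge Q/4$ and the right side is $\epsilon+O(Q^{-2/3})=(1+o(1))\epsilon$ exactly when $Q\gg_\epsilon 1$. This, and not just absorbing $\epsilon|\Theta^\ell|$ into $(\epsilon|\Theta^\ell|)^2$, is where that hypothesis is needed.

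Using $\varphi(|k|)/|k|\le1$, you then get $\E[F_\theta(F_\theta-1)],\ \E[F_\theta F_{\theta'}]\le(1+o(1))\zeta(2)^{-1}\epsilon^2$. Hence $\E[Z]^2/\E[Z^2]\ge(1-O(\epsilon)-o(1))\zeta(2)^{-1}$, and Paley--Zygmund gives probability about $\frac{1}{4}\cdot\frac{6}{\pi^2}>\frac{1}{64}$.

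With this lemma added, your argument is a valid and more elementary alternative to the paper's proof, which never computes pair correlations. The paper instead bounds $\Var(F)<8\E[F]$ for arbitrary large-area $S$ with $\Area(S\cap -S)=0$. It does this by comparing with a centered disc of equal area, using the Athreya--Margulis $L^2$ bound for mean-zero Siegel transforms plus Randol's theorem, and then reruns the $n\ge3$ proof verbatim. That route is shape-independent but needs large area, hence its rescaling by $R\asymp\epsilon^{-1}$. Yours handles the area-$\epsilon$ strips directly, at the cost of using their specific geometry.
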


To establish Proposition \ref{prop:pointsnearmanifoldn=2}, we will need a suitable version of Corollary \ref{cor:variancebound}.

\begin{lemma}[\cite{AthMar}, Lemma 4.6]\label{lemma:AthMar}
     If $f$ is compactly supported of mean zero on $\R^2$, then
     \begin{equation*}
         \norm{\hat{f}_{\prim}^{\Sie}}_{L^2(\SL_2(\R)/\SL_2(\Z))} \le \norm{f}_{L^2(\R^2)}.
     \end{equation*}
\end{lemma}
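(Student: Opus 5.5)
The plan is to read $\hat f^{\Sie}_{\prim}$ as an incomplete Eisenstein series on $G/\Gamma$ with $G=\SL_2(\R)$, $\Gamma=\SL_2(\Z)$, and then to compute its $L^2$ norm by the spectral decomposition of $L^2(G/\Gamma)$.

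First we set up the unfolding. Let $N=\mathrm{Stab}_G(e_1)$ and $\Gamma_N=\Gamma\cap N$. Since $\Gamma$ acts transitively on $\Z^2_{\prim}$ with stabilizer $\Gamma_N$ (up to $\pm$), we have $G/N\cong\R^2\setminus\{0\}$ via $g\mapsto ge_1$, and
\begin{equation*}
\hat f^{\Sie}_{\prim}(g\Gamma)=\sum_{\gamma\in\Gamma/\Gamma_N} f(g\gamma e_1).
\end{equation*}
This is exactly an incomplete Eisenstein series attached to the function $F(g)=f(ge_1)$ on $G/N$.

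Next we reduce to a single weight. Expand $f$ in polar coordinates, $f(r,\theta)=\sum_{k}f_k(r)e^{ik\theta}$. Right translation by $K=\mathrm{SO}(2)$ multiplies the $k$-th piece by a character, so the Siegel transforms of different $f_k$ are orthogonal $K$-types in $L^2(G/\Gamma)$. The pieces are also orthogonal in $L^2(\R^2)$. So it suffices to prove the bound for each $f_ke^{ik\theta}$ separately. The mean-zero hypothesis then only concerns $k=0$, and it says $\int_0^\infty f_0(r)\,r\,dr=0$.

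For fixed weight $k$ we pass to Mellin transforms. Write $h_k(s)=\int_0^\infty f_k(r)r^{2s}\,dr/r$. Unfolding the incomplete Eisenstein series against the weight-$k$ Eisenstein series $E_k(\cdot,s)$ expresses the inner product of the transform with $E_k(\cdot,s)$ through $h_k(s)$. Weight-$k$ incomplete Eisenstein series are orthogonal to all cusp forms, because a cusp form has vanishing constant term and unfolding gives zero. By the spectral theorem, the squared norm therefore splits into two parts. The first is a residual term at $s=1$, which is proportional to $|h_0(1)|^2=|\tfrac{1}{2\pi}\int f|^2$; it vanishes by the mean-zero assumption and is absent for $k\neq0$. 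The second is a continuous-spectrum term of the form
\begin{equation*}
\frac{1}{4\pi}\int_{\R}\bigl|h_k(\tfrac12+it)+\overline{\varphi_k(\tfrac12+it)}\,h_k(\tfrac12-it)\bigr|^2\,dt,
\end{equation*}
where $\varphi_k$ is the scattering coefficient. On the unitary line $|\varphi_k(\tfrac12+it)|=1$. Applying $|a+b|^2\le 2|a|^2+2|b|^2$ and the Mellin--Plancherel identity $\int_\R|h_k(\tfrac12+it)|^2\,dt\asymp\int_0^\infty|f_k(r)|^2\,r\,dr$ should give $\|\hat{(f_ke^{ik\theta})}^{\Sie}_{\prim}\|_2^2\le\|f_ke^{ik\theta}\|_2^2$. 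Summing over $k$ then gives the lemma.

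The main obstacle is bookkeeping of constants. One has to normalize $\mu$, the Haar measure on $G/N$, and the Mellin transform so that the constant comes out to be exactly $1$ (or at least $\lesssim1$, which is all Section 5 needs). One also has to check that the factor of $2$ from $|a+b|^2$ is absorbed. The expected mechanism is the symmetry $t\mapsto -t$ combined with the functional equation $E_k(\cdot,s)=\varphi_k(s)E_k(\cdot,1-s)$: this shows the two terms are related, and the integral over $\R$ double-counts by exactly that factor. If the sharp constant proves elusive, I would settle for $\lesssim\|f\|_2$, which suffices for Proposition~\ref{prop:pointsnearmanifoldn=2}.
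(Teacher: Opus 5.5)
You have the right framework: the primitive Siegel transform is an incomplete Eisenstein series, it is orthogonal to cusp forms, the residual term is killed by $\int f=0$, and you bound the continuous part using $|\varphi_k|=1$ and Mellin--Plancherel. This proves $\norm{\hat{f}^{\Sie}_{\prim}}_{L^2}\le C\norm{f}_{L^2}$; the paper itself gives no proof, only the citation to \cite{AthMar}. The gap is the step meant to give $C=1$, because the ``double counting'' you invoke is not there. The prefactor $\frac{1}{4\pi}$ in front of $\int_{\R}$ in the Parseval formula already accounts for the evenness in $t$ that comes from $E_k(\cdot,s)=\varphi_k(s)E_k(\cdot,1-s)$. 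Write $|a+\overline{\varphi_k}\,b|^2=|a|^2+|b|^2+2\,\mathrm{Re}(\varphi_k a\bar b)$. The two diagonal pieces integrate to the same quantity and together give exactly the unfolded diagonal term. For even $f$ this is the contribution of the pairs $(v,\pm v)$, namely $\frac{1}{\zeta(2)}\int f^2+\frac{1}{\zeta(2)}\int f(x)f(-x)\,dx=\frac{2}{\zeta(2)}\norm{f}_{L^2}^2$, the analogue of the Rogers terms. The cross term, which together with the residual term encodes the pairs with $\det(v,w)\neq 0$, is genuinely present and has no sign. So $|a+b|^2\le 2|a|^2+2|b|^2$ costs a real factor of $2$. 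With the paper's normalization ($\mu$ a probability measure, sum over all of $\Lambda_{\prim}$), your argument gives $\norm{\hat{f}^{\Sie}_{\prim}}_{L^2}^2\le\frac{4}{\zeta(2)}\norm{f}_{L^2}^2$.

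More decisively, the constant $1$ is false in this normalization, so no bookkeeping can produce it. Take $f=\ind_{B(0,a)}-\ind_{B(0,\sqrt{2}a)\setminus B(0,a)}$ with $\sqrt{2}a<1$, so that $\int f=0$. In a unimodular lattice, two linearly independent vectors of length $<1$ would span a sublattice of covolume $<1$. Hence $\Lambda_{\prim}\cap B(0,1)\subseteq\{\pm v_1\}$ for a shortest vector $v_1$, and $\hat{f}^{\Sie}(\Lambda)=2f(v_1)$. Siegel's formula applied to $\ind_{B(0,r)}$ with $r<1$ gives $\mu\{\Lambda:|v_1|<r\}=\frac{\pi r^2}{2\zeta(2)}$, and therefore
\begin{equation*}
\norm{\hat{f}^{\Sie}}_{L^2(\mu)}^2=4\,\mu\{\Lambda:|v_1|<\sqrt{2}a\}=\frac{4\pi a^2}{\zeta(2)}=\frac{2}{\zeta(2)}\norm{f}_{L^2}^2=\frac{12}{\pi^2}\norm{f}_{L^2}^2>\norm{f}_{L^2}^2.
\end{equation*}
Your crude bound does give constant $1$ if $\mu$ is replaced by Haar measure of mass $\zeta(2)$ and the sum runs over $\Lambda_{\prim}/\{\pm 1\}$; this may explain the constant in the quoted statement. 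So your fallback $\lesssim\norm{f}_{L^2}$ is the correct target, and it is all the paper uses. In Lemma \ref{lemma:varianceboundn=2} the constant $8$ becomes about $15$, and the $\frac{1}{64}$ in Proposition \ref{prop:pointsnearmanifoldn=2} changes accordingly.
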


\begin{thm}[\cite{Ran}, Main Theorem]\label{thm:Ran}
    For each origin-centered ball $B \subseteq \R^2$ with area $A$, we have
    \begin{equation*}
        \int_{\SL_2(\R)/\SL_2(\Z)} \left( \widehat{\ind}_B^{\Sie}(\Lambda) - \frac{A}{\zeta(2)} \right)^2\dmu(\Lambda) = \frac{A}{\zeta(2)} + O(A/\log A).
    \end{equation*}
\end{thm}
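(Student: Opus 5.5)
The plan is to compute the second moment exactly with a Rogers-type formula for $\SL_2$ and then carry out an arithmetic summation. For $f = \ind_B$ with $B$ the disc of area $A = \pi r^2$, write
\begin{equation*}
(\widehat{\ind}_B^{\Sie})^2(\Lambda) = \sum_{\vbf,\wbf \in \Lambda_{\prim}} f(\vbf)f(\wbf)
\end{equation*}
and split the pairs $(\vbf,\wbf)$ by $k = \det(\vbf,\wbf)$.

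\emph{Degenerate pairs.} The pairs with $k = 0$ are exactly $\wbf = \pm\vbf$. Siegel's mean value theorem evaluates their contribution as a multiple of $A/\zeta(2)$.

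\emph{Non-degenerate pairs.} For $k \neq 0$, I would sort the pairs of primitive vectors with determinant $k$ into $\SL_2(\Z)$-orbits. Each orbit is represented by $(e_1, (a,k))$ with $a \bmod |k|$ and $\gcd(a,k) = 1$, so there are $\varphi(|k|)$ orbits. Unfolding the integral over $\SL_2(\R)/\SL_2(\Z)$ for each orbit gives
\begin{equation*}
\int (\widehat{f}^{\Sie})^2 \dmu = (\text{diagonal}) + \frac{1}{\zeta(2)}\sum_{k \neq 0} \frac{\varphi(|k|)}{|k|^2}\, I(k),
\end{equation*}
where $I(k) = \int_{\R^2} \mu_{\xbf}\bigl(\{\ybf \in B : \det(\xbf,\ybf) = k\}\bigr) f(\xbf)\, d\xbf$. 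Here $\mu_{\xbf}$ is the natural line measure on $\{\det(\xbf,\cdot) = k\}$, and the normalization is fixed by checking against Siegel's formula. This is the $n = 2$ replacement for Theorem~\ref{thm:Rog}.

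\emph{Computing $I(k)$.} For a disc, $I(k)$ is explicit. Given $\xbf$ with $|\xbf| = s$, the line $\{\det(\xbf,\cdot) = k\}$ lies at distance $|k|/s$ from the origin, so it meets $B$ in a chord of length $2\sqrt{r^2 - k^2/s^2}$, and the line measure contributes a factor $1/s$. Hence
\begin{equation*}
I(k) = 2\pi \int_{|k|/r}^{r} 2\sqrt{r^2 - k^2/s^2}\,ds = r^2\, J(|k|/r^2)
\end{equation*}
for a fixed continuous profile $J$ supported on $[0,1]$, vanishing at $1$ with a square-root type singularity.

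\emph{Main term.} Replacing $\varphi(k)$ by its mean $k/\zeta(2)$, the sum $\sum_k \frac{\varphi(k)}{k^2} r^2 J(k/r^2)$ becomes a Riemann sum. Its leading term is $\zeta(2)^{-1} r^2 \cdot r^2 \int J = A^2/\zeta(2)$ up to normalization, which after multiplying by $1/\zeta(2)$ gives $A^2/\zeta(2)^2$. This cancels the square of the mean when we pass to the variance. Together with the diagonal terms this leaves $A/\zeta(2)$ plus the error coming from the fluctuation of $\varphi$.

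\emph{Main obstacle.} The hardest step is bounding that error by $O(A/\log A)$. Writing $\Phi(X) = \sum_{k \le X} \varphi(k) = \frac{X^2}{2\zeta(2)} + E(X)$ and summing by parts, the error becomes an integral of $E(X)$ against $\frac{d}{dX}\bigl(X^{-2} J(X/r^2)\bigr)$ over $X \le r^2$. The classical bound $E(X) = O(X\log X)$ is too weak: it only gives $O(A\log A)$. To reach $A/\log A$ I would use the averaged behaviour of $E$. One route is that $\int_1^Y E(X)/X\,dX$ is $O(Y/\log Y)$-small after removing its mean term $\frac{3}{\pi^2}\cdot$(linear), which follows from the Walfisz/Chowla results or from a direct M\"obius expansion $\varphi(k)/k = \sum_{d \mid k}\mu(d)/d$ with the prime number theorem in the form $\sum_{d \le D}\mu(d)/d = O(1/\log D)$. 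The square-root edge of $J$ at $k = r^2$ needs a separate dyadic decomposition near the endpoint. If this averaged estimate fails, I would fall back on Mellin inversion with the Dirichlet series $\zeta(s-1)/\zeta(s)$ and zero-free-region estimates, which naturally produce a $1/\log$ saving.
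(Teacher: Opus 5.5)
The paper gives no proof of this statement; it is quoted from Randol. Your unfolding computation is therefore a self-contained route, and its architecture is sound. For $k\neq0$ the pairs form $\varphi(|k|)$ free $\SL_2(\Z)$-orbits, the disc makes the orbital integrals explicit, and the M\"obius expansion of $\varphi(k)/k$ with the prime number theorem (or Mellin inversion with $\zeta(s)/\zeta(s+1)$ and the zero-free region) is exactly what controls the arithmetic fluctuation.

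One normalization is off. Write Haar measure as $\zeta(2)^{-1}\,d\mathbf{x}\,dt$ with $ge_1=\mathbf{x}$ and $ge_2=\mathbf{y}_0(\mathbf{x})+t\mathbf{x}$; this is the normalization Siegel's formula forces. Then $a\mathbf{x}+k\,ge_2$ runs along $\{\det(\mathbf{x},\cdot)=k\}$ at speed $|k||\mathbf{x}|$. So with your $I(k)$ the weight is $\varphi(|k|)/|k|$, not $\varphi(|k|)/|k|^2$. With $|k|^{-2}$ the sum would be $\asymp A\log A$ rather than $A^2$; your Riemann-sum step tacitly uses the correct weight. Explicitly, $I(k)=r^2J(|k|/r^2)$ with $J(\kappa)=4\pi\bigl(\sqrt{1-\kappa^2}-\kappa\arccos\kappa\bigr)$. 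This $J$ is $C^1$, vanishes like $(1-\kappa)^{3/2}$ at $\kappa=1$, and has $J''\in L^1$, so the edge needs no separate dyadic treatment.

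The genuine gap is the constant. The sentence ``together with the diagonal terms this leaves $A/\zeta(2)$'' is asserted, not computed, and it is not what your method yields. Since $B=-B$, the pairs $\mathbf{w}=\mathbf{v}$ and $\mathbf{w}=-\mathbf{v}$ each contribute $\frac{1}{\zeta(2)}\int\ind_B=\frac{A}{\zeta(2)}$. The diagonal is therefore $\frac{2A}{\zeta(2)}$, exactly as in Theorem~\ref{thm:Rog}.

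Nothing in the off-diagonal cancels half of this. With $T=r^2$ the off-diagonal equals $\frac{2T}{\zeta(2)}\sum_{k\ge1}\frac{\varphi(k)}{k}J(k/T)$. Second-order Euler--Maclaurin rewrites the sum as $\sum_{d\le T}\frac{\mu(d)}{d}\bigl(\frac{T}{d}\int_0^1J-\frac{J(0)}{2}+O(d/T)\bigr)$. The first part reproduces $A^2/\zeta(2)^2$ up to $o(A)$. The only candidate order-$A$ term comes from $-\frac{J(0)}{2}\sum_{d\le T}\mu(d)/d$, which tends to $0$ by the prime number theorem. In Mellin terms, $\zeta(s)/\zeta(s+1)$ vanishes at $s=0$ and cancels the pole of the Mellin transform of $J$ there. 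The Mellin route also handles the $O(d/T)$ remainders, which likewise need cancellation in $\mu(d)$.

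So, carried out correctly, your argument proves that the variance is $\frac{2A}{\zeta(2)}+O\bigl(Ae^{-c\sqrt{\log A}}\bigr)$, and it cannot reach the displayed $\frac{A}{\zeta(2)}$. Under the paper's convention that $\Lambda_{\prim}$ contains both $\pm\mathbf{v}$ (forced by the normalization in Siegel's formula), the quoted main term appears to be off by a factor of $2$. This is presumably a convention mismatch with Randol: if $\pm\mathbf{v}$ are counted once, variance equals mean. It does not hurt the paper's only use of the result, Lemma~\ref{lemma:varianceboundn=2}, where it just changes the constant $8$ to about $11$.
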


Note that Theorem \ref{thm:Ran} does not follow from Lemma \ref{lemma:AthMar}, since (nonzero) constant functions on $\SL_2(\R)/\SL_2(\Z)$ are not Siegel transforms. Using these two results, the following serves as a substitute for Corollary \ref{cor:variancebound} when $n=2$.

\begin{lemma}\label{lemma:varianceboundn=2}
    Suppose $S \subseteq \R^{2}$ is bounded measurable with $\Area(S \cap -S) = 0$, $f = \ind_S$, and $F = \hat{f}^{\Sie}$. Then whenever $\Area(S) \gg 1$, we have
    \begin{equation*}
        \Var(F) < 8\E[F].
    \end{equation*}
\end{lemma}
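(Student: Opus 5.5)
We need $\Var(F) < 8\E[F]$ for $F = \hat{\ind}_S^{\Sie}$, $S\subseteq\R^2$ bounded with $\Area(S\cap -S)=0$ and $\Area(S)=A\gg 1$. The plan is to compare $S$ with an origin-centered ball of the same size: Lemma \ref{lemma:AthMar} controls the fluctuation of the difference, and Theorem \ref{thm:Ran} controls the fluctuation of the ball.

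\textbf{Step 1 (decomposition).} Let $B$ be the origin-centered ball of area $A$ and set $g = \ind_S - \ind_B$. Then $g$ is compactly supported with $\int g = 0$, and
\begin{equation*}
    \norm{g}_{L^2}^2 = \Area(S \,\triangle\, B) \le 2A.
\end{equation*}
By Siegel's theorem, $\E[F] = A/\zeta(2)$. Writing $G_B = \hat{\ind}_B^{\Sie}$, we have
\begin{equation*}
    F - \E[F] = \hat{g}^{\Sie} + \bigl(G_B - A/\zeta(2)\bigr).
\end{equation*}

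\textbf{Step 2 (bound each piece).} Lemma \ref{lemma:AthMar} gives $\norm{\hat{g}^{\Sie}}_{L^2}^2 \le 2A$. Theorem \ref{thm:Ran} gives
\begin{equation*}
    \norm{G_B - A/\zeta(2)}_{L^2}^2 = \frac{A}{\zeta(2)} + O(A/\log A) \le \frac{2A}{\zeta(2)}
\end{equation*}
once $A \gg 1$.

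\textbf{Step 3 (combine).} Minkowski's inequality gives
\begin{equation*}
    \Var(F) \le \Bigl(\sqrt{2A} + \sqrt{2A/\zeta(2)}\Bigr)^2 = \frac{A}{\zeta(2)}\Bigl(\sqrt{2\zeta(2)} + \sqrt{2}\Bigr)^2 .
\end{equation*}
With $\zeta(2) = \pi^2/6 \approx 1.645$ we get $\sqrt{2\zeta(2)} \approx 1.814$ and $\sqrt{2} \approx 1.414$, so the factor is $(3.228)^2 \approx 10.4$. This exceeds $8$, so the naive bound falls slightly short.

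\textbf{Step 4 (main obstacle: sharpening the constant).} The loose step is $\norm{g}_{L^2}^2 \le 2A$ combined with the triangle inequality, which discards the cross term. I would fix this in two ways.

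First, avoid Minkowski. Expand the square exactly:
\begin{equation*}
    \Var(F) = \norm{\hat g^{\Sie}}^2 + \norm{G_B - \E G_B}^2 + 2\langle \hat g^{\Sie}, G_B - \E G_B\rangle .
\end{equation*}
Only the cross term needs to be controlled, and Cauchy--Schwarz on it alone is cheaper than Minkowski on the whole.

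Second, choose a better comparison function. Replace $\ind_B$ by $c\,\ind_{B'}$ for an origin-centered ball $B'$ and constant $c$ matched so that $\int g = 0$, and optimize the radius and $c$ to shrink $\norm{g}_2$. For instance, a larger ball with $c<1$ gives $\norm{g}_2^2 = A(1-c) + c^2 A/c = A(2-c)$ when $S\subseteq B'$, which is not obviously better, so this choice needs care. A cleaner alternative exploits that the primitive count is symmetric under $v\mapsto -v$: replace $\ind_S$ by its symmetrization $\tfrac12(\ind_S + \ind_{-S})$, which has the same Siegel transform. Since $\Area(S\cap -S)=0$, this function has $L^2$ norm squared $A/2$, and against the ball of area $A$ the difference has $\norm{g}_2^2 \le A/2 + A = 3A/2$. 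That gives roughly $(\sqrt{1.5\cdot 1.645} + 1.414)^2 \approx 8.2$, still marginal.

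I therefore expect the final argument to combine the symmetrization with an exact computation of the overlap term, or to note that Lemma \ref{lemma:AthMar} applies in the primitive-symmetric form with norm $\norm{f}_2/\sqrt2$. Either route should bring the constant below $8$.
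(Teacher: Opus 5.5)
\textbf{There is a genuine gap: as written, the proposal does not prove the lemma.} Your Steps 1--3 are exactly the paper's argument. You compare with the origin-centered ball $B$ of area $A$, bound the difference term by Lemma \ref{lemma:AthMar} and $\norm{g}_{L^2}\le\sqrt{2A}$, bound the ball term by Theorem \ref{thm:Ran}, and combine by Minkowski. But you end with a constant $\approx 10.4$, and Step 4 only lists speculative repairs.

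\textbf{The loose step is Step 2, not the one you named.} It is not $\norm{g}_{L^2}^2\le 2A$, the discarded cross term, or the use of Minkowski. In Step 2 you replaced Randol's asymptotic $A/\zeta(2)+O(A/\log A)$ by the crude bound $2A/\zeta(2)$, giving away a factor of $2$ for nothing. The error term is $o(A)$. So for any fixed $\eta>0$ and $A\gg_\eta 1$,
\[
\norm{G_B - A/\zeta(2)}_{L^2}^2 \le (1+\eta)\frac{A}{\zeta(2)} = (1+\eta)\E[F].
\]
The paper takes the error to be at most $A/1000$, i.e.\ $\eta=\zeta(2)/1000$. Putting this into your own Step 3 gives
\[
\Var(F)\le\Bigl(\sqrt{2\zeta(2)}+\sqrt{1+\eta}\Bigr)^2\E[F].
\]
Since $(\sqrt{2\zeta(2)}+1)^2=(\pi/\sqrt{3}+1)^2\approx 7.92<8$, any sufficiently small $\eta$ closes the argument. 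You need no cross-term analysis, no symmetrization, and no sharpened form of Lemma \ref{lemma:AthMar}.

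\textbf{Your Step 4 arithmetic is also off.} With your crude Randol bound, the symmetrized comparison gives $(\sqrt{1.5\,\zeta(2)}+\sqrt{2})^2=(\pi/2+\sqrt{2})^2\approx 8.9$, not $8.2$. So that route would not have rescued the constant either. Symmetrization is valid, since $\Lambda_{\prim}=-\Lambda_{\prim}$. Combined with the sharp Randol bound it would give about $(\pi/2+1)^2\approx 6.6$, but the unsymmetrized comparison already suffices.
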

\begin{proof}
    Let $B \subseteq \R^2$ be the origin-centered ball with $\Area(B) = \Area(S)$, let $\phi = \ind_B$, and let $\Phi = \hat{\phi}^{\Sie}$. Assume that $\Area(S) \gg 1$ is large enough that the big-O term in Theorem \ref{thm:Ran} is of size at most $A/1000$. Then
    \begin{align*}
        \Var(F) &= \norm{F - \E[F]}_{L^2}^2 \\
        &\le \left( \norm{F-\Phi}_{L^2} + \norm{\Phi - \E[F]}_{L^2} \right)^2 \\
        &\le \left( \norm{f-\phi}_{L^2} + \sqrt{\E[F] + \Area(S)/1000} \right)^2 \\
        &\le \left(\sqrt{2\Area(S)} + \sqrt{(1+\zeta(2)/1000)\E[F]} \right)^2 \\
        &= \left(\sqrt{2\zeta(2)} + \sqrt{1+\zeta(2)/1000} \right)^2\E[F] \\
        &< 8\E[F].
    \end{align*}
\end{proof}

\begin{proof}[Proof of Proposition \ref{prop:pointsnearmanifoldn=2}]
    One follows exactly the proof of Proposition \ref{prop:pointsnearmanifold} with Lemma \ref{lemma:varianceboundn=2} in place of Corollary \ref{cor:variancebound}, rescaling by $R \asymp \epsilon^{-1}$ so that the assumption $\Area(RQ \Mc_\phi(\epsilon,\theta)) \gg 1$ is satisfied. Then \eqref{eq:Z expectation} gives
    \begin{equation*}
        \E[Z] \ge \frac{1-o(1)}{8}\E[F].
    \end{equation*}
    By plugging this estimate into the Paley-Zygmund bound \eqref{eq:PaleyZygmund}, we obtain
    \begin{equation*}
        \P(Z \gtrsim \E[Z]) \ge \frac{1}{64}
    \end{equation*}
    whenever $Q \gg_\epsilon 1$.
\end{proof}

\subsection{Transferring to high probability}

Proposition \ref{prop:pointsnearmanifoldn=2} yields only a positive probability estimate, as opposed to a high-probability one. To circumvent this, we can work with the \textit{commensurator} $\SL_2(\Q)$ of $\SL_2(\Z)$.

\begin{thm}\label{thm:PointsOnManifoldsn=2}
    For all $\phi \in C^2([0,1])$, $\delta > 0$, $0 < \epsilon \ll 1$, and $Q \gg_{\epsilon} 1$, there exists $g \in \SL_2(\R)$ with $\max_{i,j}|g_{ij}-\delta_{ij}| \lesssim \delta$, and $\phi_g \in C^2([0,1])$ with $\norm{\phi - \phi_g}_{C^2} \lesssim \epsilon$ for which
    \begin{equation*}
        |g\Mc_{\phi_g} \cap Q^{-1}\Z^2| \gtrsim_{\delta,\epsilon}Q^{2/3}.
    \end{equation*}
\end{thm}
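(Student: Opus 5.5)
The plan is to mimic the proof of Theorem \ref{thm:PointsOnManifolds} and to handle the weak probability bound $\frac{1}{64}$ of Proposition \ref{prop:pointsnearmanifoldn=2} using the commensurator $\SL_2(\Q)$. The difficulty is this. Proposition \ref{prop:pointsnearmanifoldn=2} gives a good set $\Gc \subseteq \SL_2(\R)/\SL_2(\Z)$ with $\mu(\Gc) \ge \frac{1}{64}$, but a fixed ball $B_{\delta}^G(\Id)$ has measure only $\asymp \delta^3$. So we cannot conclude directly that $\Gc$ meets the projection of the ball. The key observation is that if $\Lambda$ is good for the counting problem at scale $Q$, then so is any lattice $\Lambda'$ that contains $\Lambda$ up to a bounded rational index. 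To make this usable for the integer lattice $\Z^2$ itself, we note that $Q^{-1}\Lambda'$-points for $\Lambda' = \gamma\Lambda$ with $\gamma \in \SL_2(\Q)$ having denominators at most $N$ lie in $(NQ)^{-1}\Lambda$-type sets. Hence counting for $\Lambda$ at scale $Q$ transfers to counting for $\gamma\Lambda$ at scale $NQ$, at a cost depending only on $N$.

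\emph{Step 1.} Apply Proposition \ref{prop:pointsnearmanifoldn=2} to get $\Gc_Q$ with $\mu(\Gc_Q) \ge \frac{1}{64}$. Each $\Lambda \in \Gc_Q$ has $\gtrsim \epsilon Q^{2/3}$ caps $\theta$ for which $\Mc_\phi(\epsilon,\theta)$ contains a point of $Q^{-1}\Lambda_{\prim}$.

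\emph{Step 2.} Use equidistribution of Hecke orbits, i.e.\ the ergodicity of the $\SL_2(\Q)$-action on $G/\Gamma$ (equivalently, density of $\SL_2(\Q)$ together with Hecke operator mixing). From this we obtain finitely many $\gamma_1,\dots,\gamma_K \in \SL_2(\Q)$, with $K$ and the denominators bounded in terms of $\delta$ only, such that $\bigcup_i \gamma_i^{-1}\Gc_Q$ covers all of $G/\Gamma$ up to measure $< \mu(B_\delta^G(\Id))/2$. A cleaner route: the Hecke operators $T_N$ have spectral gap on $L^2_0(G/\Gamma)$. Hence $\langle T_N \ind_{\Gc_Q}, \ind_{B} \rangle \to \mu(\Gc_Q)\mu(B)$ as $N\to\infty$, uniformly in $Q$, since only $\mu(\Gc_Q)\ge 1/64$ and $\norm{\ind_{\Gc_Q}}_2 \le 1$ enter. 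Choosing $N = N(\delta)$ gives some $h \in B_\delta^G(\Id)$ and some $\gamma$ in the $N$-th Hecke correspondence with $\gamma h\Z^2 \in \Gc_Q$.

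\emph{Step 3.} Set $\Lambda = \gamma h \Z^2$. Then $N\Lambda \subseteq h\Z^2 \subseteq N^{-1}\Lambda$ up to the index-$N$ relations, so $Q^{-1}\Lambda \subseteq (NQ)^{-1} h\Z^2$. Now run Proposition \ref{prop:perturbations} for $\Lambda$ to obtain $\phi_\Lambda$ with $\norm{\phi-\phi_\Lambda}_{C^2} \lesssim \epsilon$ passing through $\gtrsim \epsilon Q^{2/3}$ points of $Q^{-1}\Lambda \subseteq (NQ)^{-1}h\Z^2$. Applying the duality of \S4.4 with $g = h^{-1}$, which still satisfies $\norm{g-\Id}_{\max}\lesssim\delta$, gives $\gtrsim \epsilon Q^{2/3} \gtrsim_{\delta,\epsilon} (NQ)^{2/3}$ points of $g\Mc_{\phi_g}$ in $(NQ)^{-1}\Z^2$. (Primitivity is lost here, which is why the statement uses $\Z^2$ rather than $\Z^2_{\prim}$.) To reach an arbitrary large $Q'$ rather than $NQ$, apply the argument with $Q = Q'/N$. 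Proposition \ref{prop:pointsnearmanifoldn=2} holds for non-dyadic $Q$ with $\ell$ adjusted, changing constants only.

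\emph{Main obstacle.} I expect Step 2 to be the hardest: obtaining a Hecke-equidistribution statement that is uniform over the varying sets $\Gc_Q$. I would first try the quantitative spectral gap for Hecke operators on $\SL_2(\R)/\SL_2(\Z)$, i.e.\ Ramanujan-type bounds $\norm{T_N|_{L^2_0}} \lesssim N^{-1/2+o(1)}$ normalized by degree. This bound depends only on the $L^2$ norms, so uniformity in $Q$ comes for free. I would then check that the thickening/perturbation constants degrade only polynomially in $N$.
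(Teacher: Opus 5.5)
Your argument works once one slip is fixed, and it takes a genuinely different and heavier route than the paper.

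\textbf{The slip.} The rational matrix has to act on the right. The Hecke correspondence sends $h\Gamma$ to the points $h\alpha_i\Gamma$, so you should set $\Lambda = h\gamma\Z^2$, with $\gamma\in\SL_2(\Q)$ having denominators dividing $N$. Then $\gamma\Z^2\subseteq N^{-1}\Z^2$ gives $Q^{-1}\Lambda\subseteq (NQ)^{-1}h\Z^2$, and Step 3 goes through. Note that in $\SL_2$ one has $\norm{h^{-1}-\Id}_{\max}=\norm{h-\Id}_{\max}$. As you wrote it, $\gamma h\Z^2$ is not commensurable with $h\Z^2$ for generic $h$, so your claimed containment would fail.

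\textbf{Step 2.} For the same reason, your first sketch (ergodicity of the left $\SL_2(\Q)$-action) is not what you need. Ergodicity also gives no finite family of translates that works uniformly in $Q$. The spectral-gap version is the one that works. Its input is any power decay $\norm{T|_{L^2_0}}\lesssim N^{-c}$ for the Hecke operators attached to $\Gamma\,\mathrm{diag}(N,N^{-1})\Gamma$ (say with $N$ prime). This follows from known bounds toward Ramanujan, e.g.\ Kim--Sarnak. The rate $N^{-1/2+o(1)}$ you quote is the Ramanujan conjecture for Maass forms, which is open, but you do not need it. With power decay, $N(\delta)$ is polynomial in $\delta^{-1}$.

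\textbf{Comparison with the paper.} The paper avoids spectral theory by swapping the roles of the two sets.
\begin{itemize}
\item It fixes a compact $\Kc\subseteq G$ whose projection to $G/\Gamma$ has measure at least $0.99$.
\item By density of $\SL_2(\Q)$ and compactness, it covers $\Kc$ by finitely many right translates $U_\delta^G\gamma_1,\dots,U_\delta^G\gamma_M$.
\item This cover does not depend on $Q$, so $\mu(\Gc)\ge 1/64$ alone forces some $h\in\Kc\cap\Gc$.
\item It writes $h=g^{-1}\gamma_j$ with $g\in U_\delta^G$. The inclusion $\gamma_j\Z^2\subseteq H^{-1}\Z^2$ then performs the transfer exactly as in your Step 3.
\end{itemize}
So the uniformity over the varying sets $\Gc_Q$, which you flag as the main obstacle, never arises; no equidistribution or automorphic input is needed.

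What your route buys in exchange is effectivity. It gives an explicit denominator bound $N(\delta)$, and hence an explicit $\delta$-dependence in the constant $\gtrsim_{\delta,\epsilon}$. In the paper the corresponding bound $H\lesssim\delta^{-3}$ is asserted without argument. It also depends implicitly on the choice of $\Kc$, that is, on how far into the cusp one must go.
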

\begin{proof}
    Choose a right-invariant Riemannian metric $\dist_G$ on $G$, i.e., $\dist_G(xg,yg) = \dist_G(x,y)$ for all $x,y,g \in G$, and let $U_\delta^G = \{g \in G : \dist_G(g,\id) < \delta\}$. Note that, on any compact subset $\Kc \subseteq G$, the metric $\dist_G$ is equivalent to the one induced by $\norm{\cdot}_{\max}$, with implied constant depending only on $\Kc$.

    Let $\Kc \subseteq G$ be a compact subset whose projection to $G/\Gamma$ has $\mu$-measure at least $0.99$. By compactness and density of $\SL_2(\Q) \subseteq \SL_2(\R)$, there exists a finite set $\gamma_1,...,\gamma_M \in \SL_n(\Q)$ with $M = O_\delta(1)$ so that
    \begin{equation}\label{eq:finitecover}
        \Kc \subseteq \bigcup_{j=1}^M U_\delta^G \gamma_j.
    \end{equation}
    As in Proposition \ref{prop:perturbations2}, we can find for all $0 < \epsilon \ll 1$ and all $Q \gg_\epsilon 1$ a subset $\Gc_{\phi,\epsilon,Q} \subseteq G$ of measure at least $\frac{1}{64}$ for which we have
    \begin{equation*}
        \left| \left\{ \theta \in \Theta^\ell : \Mc_\phi(\epsilon,\theta) \cap Q^{-1}h\Z^2 \ne \emptyset \right\} \right| \gtrsim \epsilon Q^{2/3}.
    \end{equation*}
    By the measure assumption on $\Kc$, there exists some $h \in \Kc \cap \Gc_{\phi,\epsilon,Q}$. By \eqref{eq:finitecover}, there exists some $\gamma_j$ and some $g \in U_\delta^G$ for which $h=g^{-1}\gamma_j$. Thus, we have
    \begin{equation*}
        \left\{ \theta \in \Theta^\ell : \Mc_\phi(\epsilon,\theta) \cap Q^{-1}h\Z^2 \ne \emptyset \right\} = \left\{ \theta \in \Theta^\ell : g\Mc_\phi(\epsilon,\theta) \cap Q^{-1}\gamma_j\Z^2 \ne \emptyset \right\}.
    \end{equation*}
    Let $H = \mathrm{ht}(\gamma_1,...,\gamma_M)$ be the maximal denominator appearing in any coefficient of the $\gamma_j$, and note $H \lesssim \delta^{-3}$. Then $\gamma_j\Z^2 \subseteq H^{-1}\Z^2$, and so
    \begin{align*}
        \left| \left\{ \theta \in \Theta^\ell : g\Mc_\phi(\epsilon,\theta) \cap Q^{-1}\gamma_j\Z^2 \ne \emptyset \right\} \right| &\le \left| \left\{ \theta \in \Theta^\ell : g\Mc_\phi(\epsilon,\theta) \cap Q^{-1}H^{-1}\Z^2 \ne \emptyset \right\} \right|.
    \end{align*}
    The rest of the proof goes through exactly as in Section \ref{section:counting} (note that we may absorb $H$ into the implicit constants when working with the full graph $\Mc_\phi$, although not with the individual thickenings $\Mc_\phi(\epsilon,\theta)$).
\end{proof}

\nocite{*}
\printbibliography

@article{AthMar,
shorthand = {AM09},
  author    = {Athreya, Jayadev and Margulis, Grigory},
  title     = {Logarithm laws for unipotent flows, {I}},
  journal   = {Journal of Modern Dynamics},
  volume    = {3},
  number    = {3},
  pages     = {359--378},
  year      = {2009}
}

@inproceedings{Ran,
shorthand = {Ran70},
  author    = {Randol, Burton},
  title     = {A group-theoretic lattice-point problem},
  booktitle = {Problems in Analysis: A Symposium in Honor of Salomon Bochner},
  editor    = {Gunning, Robert C.},
  series    = {Princeton Mathematical Series},
  volume    = {31},
  pages     = {291--295},
  publisher = {Princeton University Press},
  address   = {Princeton, NJ},
  year      = {1970}
}

@article{Sie,
shorthand = {Sie45},
  author    = {Siegel, Carl Ludwig},
  title     = {A Mean Value Theorem in Geometry of Numbers},
  journal   = {The Annals of Mathematics},
  volume    = {46},
  number    = {2},
  pages     = {340},
  year      = {1945},
  month     = {apr},
  doi       = {10.2307/1969027},
  url       = {https://www.jstor.org/stable/1969027},
  issn      = {0003486X}
}

@article{Rog,
shorthand = {Rog56},
  author    = {Rogers, Claude Ambrose},
  title     = {The Number of Lattice Points in a Set},
  journal   = {Proceedings of the London Mathematical Society},
  volume    = {s3-6},
  number    = {2},
  pages     = {305--320},
  year      = {1956},
  month     = {apr},
  doi       = {10.1112/plms/s3-6.2.305},
  url       = {http://doi.wiley.com/10.1112/plms/s3-6.2.305},
  issn      = {00246115},
  language  = {en}
}

@article{FuRenWan, title={A note on maximal operators for the Schrödinger equation on $\mathbb{T}^1.$}, url={http://arxiv.org/abs/2307.12870}, DOI={10.48550/arXiv.2307.12870}, note={arXiv:2307.12870}, number={arXiv:2307.12870}, publisher={arXiv}, author={Fu, Yuqiu and Ren, Kevin and Wang, Haoyu}, year={2023}, month=july }

@article{BouDem,
  title={The proof of the {$l^2$} decoupling conjecture},
  author={Bourgain, Jean and Demeter, Ciprian},
  journal={Annals of Mathematics},
  volume={182},
  number={1},
  pages={351--389},
  year={2015},
  publisher={Department of Mathematics of Princeton University}
}

@article{Moy,
  title={Bounds for the maximal function associated to periodic solutions of one-dimensional dispersive equations},
  author={Moyua, Adela and Vega, Luis},
  journal={Bulletin of the London Mathematical Society},
  volume={40},
  number={1},
  pages={117--128},
  year={2008},
  publisher={Oxford University Press},
  doi={10.1112/blms/bdm096}
}

@article{Bar,
  author  = {Barron, Alex},
  title   = {An {$L^4$} Maximal Estimate for Quadratic Weyl Sums},
  journal = {International Mathematics Research Notices},
  year    = {2021},
  volume  = {2022},
  pages   = {17305--17332},
  doi     = {10.1093/imrn/rnab182}
}

@article{DuZha,
  author  = {Du, Xiumin and Zhang, Ruixiang},
  title   = {Sharp {$L^2$} estimates of the Schr\"{o}dinger maximal function in higher dimensions},
  journal = {Annals of Mathematics},
  year    = {2019},
  volume  = {189},
  number  = {3},
  pages   = {837--861},
  doi     = {10.4007/annals.2019.189.3.4}
}

@article{Bak,
  author  = {Baker, Roger},
  title   = {{$L^p$} maximal estimates for quadratic Weyl sums},
  journal = {Acta Mathematica Hungarica},
  year    = {2021},
  volume  = {165},
  number  = {2},
  pages   = {316--325},
  doi     = {10.1007/s10474-021-01173-3}
}

@article{Mia,
  title={Maximal estimates for Weyl sums on $\mathbb{T}^d$},
  author={Miao, Changxing and Yuan, Jiye and Zhao, Tengfei and Barron, Alex},
  journal={Journal of Functional Analysis},
  volume={284},
  number={2},
  pages={109747},
  year={2023},
  publisher={Elsevier},
  doi={10.1016/j.jfa.2022.109747}
}

@article{FuGutMal, title={Decoupling inequalities for short generalized Dirichlet sequences}, volume={16}, ISSN={1948-206X, 2157-5045}, url={https://msp.org/apde/2023/16-10/p05.xhtml}, DOI={10.2140/apde.2023.16.2401}, number={10}, journal={Analysis \& PDE}, author={Fu, Yuqiu and Guth, Larry and Maldague, Dominique}, year={2023}, month=dec, pages={2401–2464}, language={en} }

@article{CaiZha, title={Power loss for the Mizohata-Takeuchi conjecture on $C^k$ convex hypersurfaces}, url={http://arxiv.org/abs/2512.08064}, DOI={10.48550/arXiv.2512.08064}, note={arXiv:2512.08064}, number={arXiv:2512.08064}, publisher={arXiv}, author={Cairo, Hannah and Zhang, Ruixiang}, year={2025}, month=dec }

@incollection{Car,
  author    = {Lennart Carleson},
  title     = {Some analytic problems related to statistical mechanics},
  booktitle = {Euclidean Harmonic Analysis},
  editor    = {J. J. Benedetto},
  series    = {Lecture Notes in Mathematics},
  volume    = {779},
  pages     = {5--45},
  year      = {1980},
  publisher = {Springer},
  address   = {Berlin, Heidelberg},
  doi       = {10.1007/BFb0087666}
}

@incollection{DahKen,
  author    = {Dahlberg, B. E. J. and Kenig, C. E.},
  title     = {A note on the almost everywhere behaviour of solutions to the {Schr{\"o}dinger} equation},
  booktitle = {Harmonic Analysis: Proceedings of a Conference Held at the University of Minnesota, Minneapolis, April 20--30, 1981},
  series    = {Lecture Notes in Mathematics},
  volume    = {908},
  pages     = {205--209},
  year      = {1982},
  publisher = {Springer},
  address   = {Berlin, Heidelberg}
}

@article{Bou,
  author  = {Bourgain, Jean},
  title   = {A note on the {Schr{\"o}dinger} maximal function},
  journal = {Journal d'Analyse Math{\'e}matique},
  year    = {2016},
  volume  = {130},
  number  = {1},
  pages   = {393--396},
  doi     = {10.1007/s11854-016-0035-1},
  publisher={Springer}
}

@article{Pie,
  author  = {Pierce, Lillian B.},
  title   = {On {Bourgain's} counterexample for the {Schr{\"o}dinger} maximal function},
  journal = {The Quarterly Journal of Mathematics},
  year    = {2020},
  volume  = {71},
  number  = {4},
  pages   = {1309--1344},
  doi     = {10.1093/qmath/haaa032},
  publisher = {Oxford University Press}
}

@article{DuGutLi,
  author  = {Du, Xiumin and Guth, Larry and Li, Xiaochun},
  title   = {A sharp {Schr{\"o}dinger} maximal estimate in {$\mathbb{R}^2$}},
  journal = {Annals of Mathematics},
  year    = {2017},
  volume  = {186},
  number  = {2},
  pages   = {607--640},
  doi     = {10.4007/annals.2017.186.2.5}
}

@article{ComLucSta,
  author  = {Compaan, Erin and Luc{\`a}, Renato and Staffilani, Gigliola},
  title   = {Pointwise convergence of the {Schr{\"o}dinger} flow},
  journal = {International Mathematics Research Notices},
  year    = {2021},
  volume  = {2021},
  number  = {1},
  pages   = {596--647},
  doi     = {10.1093/imrn/rnaa036},
  publisher={Oxford University Press}
}

@article{Okt,
  author  = {Öktem, I. Ferit},
  title   = {On the product $\prod_{n=2}^\infty \zeta(n)$},
  journal = {The Mathematical Gazette},
  year    = {2015},
  volume  = {99},
  pages   = {422--426},
  doi     = {10.1017/mag.2015.78}
}

@article{Fai, title={A higher moment formula for the Siegel–Veech transform over quotients by Hecke triangle groups}, volume={15}, ISSN={1661-7207, 1661-7215}, url={https://ems.press/doi/10.4171/ggd/591}, DOI={10.4171/ggd/591},  number={1}, journal={Groups, Geometry, and Dynamics}, author={Fairchild, Samantha}, year={2020}, month=dec, pages={57–81} }

@article{BouDem1,
    author = {Bourgain, Jean and Demeter, Ciprian},
    title = {Decouping for surfaces in $\mathbb{R}^4$},
    journal = {Journal of Functional Analysis},
    year = {2016},
    doi = {https://doi.org/10.1016/j.jfa.2015.11.008.},
    pages = {1299--318},
    volume = {270},
    number = {4}
}

@book{demeter2020fourier,
  author    = {Demeter, Ciprian},
  title     = {Fourier Restriction, Decoupling, and Applications},
  publisher = {Cambridge University Press},
  series    = {Cambridge Studies in Advanced Mathematics},
  volume    = {184},
  year      = {2020},
  doi       = {10.1017/9781108694407},
  isbn      = {9781108499705}
}

@article{bourgain2017study,
  author    = {Bourgain, Jean and Demeter, Ciprian},
  title     = {A study guide for the $\ell^2$ decoupling theorem},
  journal   = {Chinese Annals of Mathematics, Series B},
  volume    = {38},
  number    = {1},
  pages     = {173--200},
  year      = {2017},
  doi       = {10.1007/s11401-016-1066-1},
  eprint    = {1604.06032},
  archivePrefix = {arXiv},
  primaryClass  = {math.CA}
}

@article{And, title={A lower bound for the volume of strictly convex bodies with many boundary lattice points}, volume={106}, ISSN={1088-6850, 0002-9947}, url={https://www.ams.org/tran/1963-106-02/S0002-9947-1963-0143105-7/}, DOI={10.1090/S0002-9947-1963-0143105-7}, number={2}, journal={Transactions of the American Mathematical Society}, author={Andrews, George E.}, year={1963}, pages={270–279}, language={en} }

@article{Kiyohara2024LatticePoints,
  author    = {Daishi Kiyohara},
  title     = {Lattice points on a curve via $\ell^2$ decoupling},
  journal   = {International Journal of Number Theory},
  volume    = {20},
  number    = {08},
  pages     = {2045--2057},
  year      = {2024},
  publisher = {World Scientific Publishing Company},
  doi       = {10.1142/S1793042124500994},
  url       = {https://doi.org/10.1142/S1793042124500994}
}

@article{Demeter2025LevelSet,
  author   = {Ciprian Demeter},
  title    = {Level set estimates for the periodic {S}chr{\"o}dinger maximal function on $\mathbb{T}^1$},
  journal  = {Advances in Mathematics},
  volume   = {467},
  pages    = {110186},
  year     = {2025},
  publisher = {Elsevier},
  doi      = {10.1016/j.aim.2025.110186},
  eprint   = {2402.01099},
  archivePrefix = {arXiv},
  primaryClass = {math.CA}
}
\vspace{\fill}
\noindent
\textsc{Department of Mathematics, University of Wisconsin--Madison, 480 Lincoln Dr., Madison, WI 53706, USA}\\[2pt]
\textit{Email address:} \texttt{gottliebfenv@wisc.edu}\\
\textit{Email address:} \texttt{jtan84@wisc.edu}
\end{document}